\newtheorem{theorem}[subsection]{Theorem}
\newtheorem{example}[subsection]{Example}
\newtheorem{conjecture}[subsection]{Conjecture}
\newtheorem{definition}[subsection]{Definition}
\newtheorem{lemma}[subsection]{Lemma}
\newtheorem{remark}[subsection]{Remark}
\newtheorem{proposition}[subsection]{Proposition}
\newtheorem{corollary}[subsection]{Corollary}
\newtheorem*{claim*}{Claim}
\newtheorem*{theorem*}{Theorem}
\def\bal{\begin{aligned}}
\def\eal{\end{aligned}}
\def\be{\begin{equation}\label}
\def\ee{\end{equation}}
\def\bcs{\begin{cases}}
\def\ecs{\end{cases}}
\def\={\;=\;}
\def\+{\,+\,}
\def\-{\,-\,}
\def\Z{{\mathbb Z}}
\def\Q{{\mathbb Q}}
\def\R{{\mathbb R}}
\def\lb{\llbracket}
\def\rb{\rrbracket}
\def\ord{\mathrm{ord}}
\def\sF{\mathcal{F}}
\def\cartier{\mathscr{C}_p}
\def\fil{\mathscr{F}}
\def\v#1{{\bf #1}}
\def\is{\equiv}
\def\mod#1{({\rm mod}\ #1)}
\def\ceil#1{\lceil #1\rceil}
\def\boldomega{\bm{\omega}}
\def\pow#1{\llbracket #1\rrbracket}
\def\hat{\widehat}
\def\An{A\textsubscript{n}}
\title{Dwork crystals III: from excellent Frobenius lifts towards supercongruences}
\author{Frits Beukers, Masha Vlasenko}
\address{Utrecht University }
\email{f.beukers@uu.nl}
\address{Institute of Mathematics of the Polish Academy of Sciences}
\email{m.vlasenko@impan.pl}
\thanks{
Work of Frits Beukers was supported by the Netherlands Organisation
for Scientific Research (NWO), grant TOP1EW.15.313. Work of Masha Vlasenko was supported by the National Science Centre of Poland (NCN), grant UMO-2020/39/B/ST1/00940.}
\begin{document}
\maketitle

\begin{abstract}
This paper is a continuation of our Dwork crystals series. Here we exploit the Cartier operation to
prove supercongruences for expansion coefficients of rational functions.
In the process it appears that excellent Frobenius lifts are a driving force behind supercongruences.
Originally introduced by Dwork, these excellent lifts have occurred rather infrequently in the literature,
and only in the context of families of elliptic curves and abelian varieties. In the final sections of
this paper we present a list of examples that occur in the case of families of Calabi-Yau varieties.

\noindent Keywords: Dwork congruences, excellent Frobenius lift, supercongruences

\end{abstract}

\section{Introduction}
We recall the main points of our earlier papers \cite{BeVl20I} and \cite{BeVl20II}
titled Dwork crystals I and II.
In this paper we shall refer to them as Part I and Part II.
Let $p$ be an odd prime and $R$ \emph{be a $p$-adic ring}.
By that we mean a characteristic zero domain such that $\cap_{s\ge1}p^{s}R=\{0\}$ 
and which is $p$-adically complete. Common examples are $\Z_p,\Z_p\lb t\rb$ and
the $p$-adic completion of $\Z[t,1/P(t)]$ where $P(t) \in \Z[t]$ is a polynomial not divisible by $p$.
We also assume our ring is equipped with a so-called \emph{Frobenius lift $\sigma$},
i.e. an endomorphism
$\sigma:R\to R$ such that $\sigma(r)\is r^p\mod{p}$ for all $r\in R$. 

The main player is a Laurent polynomial $f(\v x)= \sum_{\v u} f_\v u \v x^\v u$ in the variables $\v x=(x_1,\ldots,x_n)$ 
with coefficients $f_{\v u} \in R$. The Newton polytope of $f$ is the convex hull of the
support $Supp(f) = \{ \v u \in \Z^n \;|\; f_\v u \ne 0\}$ and is denoted by $\Delta \subset \R^n$.  On $\Delta$ we have 
a rudimentary topology
whose closed sets are the faces of $\Delta$. An example of an open set is given by the interior $\Delta^\circ = \Delta \setminus \{ \text{ all proper faces } \}$.

Let $\mu\subset\Delta$ be any open subset. 
We consider the following $R$-module consisting of rational functions of a special shape: 
\[
\Omega_f(\mu) = \left\{ (m-1)!\frac{A(\v x)}{f(\v x)^{m}}\quad \Big| \quad m \ge 1 \text{ and } \mbox{ Supp}(A)\subset m\mu. \right\}
\]
When $\mu=\Delta$ we simply write $\Omega_f(\Delta)=\Omega_f$. By
$\hat\Omega_f(\mu)$ we denote the $p$-adic completion of $\Omega_f(\mu)$. In Part I
we have defined the $R$-linear \emph{Cartier operator} 
\[
\cartier:\hat\Omega_f(\mu)\to\hat\Omega_{f^\sigma}(\mu).
\]
Here $f^\sigma = \sum_\v u f_\v u^\sigma \v x^\v u$ denotes $f$ with $\sigma$ applied to its coefficients.
One can expand the elements of $\hat\Omega_f$ as
formal Laurent series, and our Cartier operation has a particularly simple expression on
such expansions:
\[
\cartier\left(\sum_{\v u}a_{\v u}\v x^{\v u}\right)=\sum_{\v u}a_{p\v u}\v x^{\v u}.
\]
This is explained in \S2 of Part I. Using $\cartier$ we defined the submodule
\[
\fil_1(\mu)=\{\omega\in\hat\Omega_f(\mu)\quad|\quad\cartier^s(\omega)\in p^s\hat
\Omega_{f^{\sigma^s}}(\mu),\ s\ge1\}.
\] 
From this definition one can immediately see 
that $\cartier(\fil_1(\mu))\subset p\fil_1^\sigma(\mu)$, where $\fil_1^\sigma(\mu)$ denotes a similar submodule in $\hat\Omega_{f^\sigma}(\mu)$. The main result of Part I is the following.

\begin{theorem}[\cite{BeVl20I}, Theorem 4.3 and Remark 4.6]\label{main-Part-I} If the Hasse--Witt matrix relative to $\mu$ is invertible over $R$, then we have the direct sum decomposition of $R$-modules
\[
\hat\Omega_f(\mu)\cong \Omega_f^{(1)}(\mu)\oplus\fil_1(\mu),
\]
where $\Omega_f^{(1)}(\mu)$ is the free $R$-module generated by the elements $\frac{\v x^{\v u}}{f(\v x)}$
with $\v u\in\mu \cap \Z^n$. Moreover, 
\[
\cartier\left(\hat\Omega_f(\mu)\right)\subset\Omega_{f^\sigma}^{(1)}(\mu)+p\fil_1^\sigma(\mu).
\]
\end{theorem}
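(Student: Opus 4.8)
\emph{Plan.} I would establish the direct sum decomposition first and then read off the last inclusion. The decomposition amounts to two statements: (i) $\Omega_f^{(1)}(\mu)+\fil_1(\mu)=\hat\Omega_f(\mu)$, and (ii) $\Omega_f^{(1)}(\mu)\cap\fil_1(\mu)=\{0\}$. Two ingredients drive the argument. The first is a \emph{pole-order reduction} for the Cartier operator, stated below. The second is the meaning of the hypothesis: the Hasse--Witt matrix relative to $\mu$ is by construction the matrix, in the bases $\{\v x^\v u/f\}_{\v u\in\mu\cap\Z^n}$ and $\{\v x^\v u/f^\sigma\}$, of the map $\bar{\cartier}\colon\Omega_f^{(1)}(\mu)/p\to\Omega_{f^\sigma}^{(1)}(\mu)/p$ induced by $\cartier$; so invertibility over $R$ says exactly that $\bar{\cartier}$ is an isomorphism. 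Since reduction modulo $p$ turns $f^\sigma$ into the Frobenius twist of $f$ (apply the Frobenius of $R/pR$ to the coefficients), the Hasse--Witt matrix of every twist $f^{\sigma^j}$ is the corresponding Frobenius twist of that of $f$, hence also invertible; therefore $\bar{\cartier}$ is an isomorphism at every twist, and so are the composites $\bar{\cartier}^{\,s}\colon\Omega_f^{(1)}(\mu)/p\to\Omega_{f^{\sigma^s}}^{(1)}(\mu)/p$ for all $s\ge1$.

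\emph{Pole-order reduction.} The key intermediate fact --- one of the congruences established in Part~I --- is that $\cartier$ \emph{contracts pole order}: applied to $(m-1)!\,A/f^m$ with $\supp(A)\subseteq m\mu$ it lands, up to $p$-divisible terms, in $\Omega_{f^\sigma}^{(\ceil{m/p})}(\mu)$, and, by Dwork's estimates for the $p$-adic valuations of the relevant ratios of factorials, any resulting pole of order exceeding $1$ carries a compensating power of $p$. This rests on $f(\v x)^p\is f^\sigma(\v x^p)\mod p$ and on the behaviour of $\cartier$ on Laurent expansions, in particular the identity $\cartier\bigl(B(\v x)/g(\v x^p)^N\bigr)=\cartier(B)(\v x)/g(\v x)^N$, which follows at once from $\cartier(\sum_\v u a_\v u\v x^\v u)=\sum_\v u a_{p\v u}\v x^\v u$. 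The consequences I use are $\cartier\bigl(\hat\Omega_f(\mu)\bigr)\subseteq\Omega_{f^\sigma}^{(1)}(\mu)+p\,\hat\Omega_{f^\sigma}(\mu)$ and the same with any twist in place of $f$; this is also what makes $\bar{\cartier}$, hence the Hasse--Witt matrix, well defined.

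\emph{Surjectivity and directness.} For (i), given $\omega\in\hat\Omega_f(\mu)$ I build a correction $\nu=\nu_0+p\nu_1+p^2\nu_2+\cdots$ by a Hensel-type recursion: pole-order reduction gives $\cartier^{j+1}(\omega)\in\Omega_{f^{\sigma^{j+1}}}^{(1)}(\mu)+p\,\hat\Omega$, and I inductively choose $\nu_j\in\Omega_f^{(1)}(\mu)$ --- solving modulo $p$ by inverting the isomorphism $\bar{\cartier}^{\,j+1}$ --- so that $\cartier^{j+1}\!\bigl(\omega-\nu_0-p\nu_1-\cdots-p^j\nu_j\bigr)\in p^{\,j+1}\hat\Omega_{f^{\sigma^{j+1}}}(\mu)$; the lower-level conditions are undisturbed because the new correction is $p^j$ times something. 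The partial sums converge $p$-adically to $\nu\in\Omega_f^{(1)}(\mu)$, and since $\cartier^s(\omega-\nu)\is\cartier^s\!\bigl(\omega-\sum_{j<s}p^j\nu_j\bigr)\mod{p^s\hat\Omega}$ we get $\cartier^s(\omega-\nu)\in p^s\hat\Omega$ for all $s$, i.e.\ $\omega-\nu\in\fil_1(\mu)$. For (ii), if $\omega\in\Omega_f^{(1)}(\mu)\cap\fil_1(\mu)$ then $\cartier^s(\omega)\in p^s\hat\Omega\subseteq p\hat\Omega$ forces $\bar{\cartier}^{\,s}(\bar\omega)=0$ for all $s$; with $s=1$ this gives $\bar\omega=0$, so $\omega=p\omega'$ with $\omega'\in\Omega_f^{(1)}(\mu)$. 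By $p$-torsion-freeness $\cartier^s(\omega')\in p^{s-1}\hat\Omega$, hence $\bar{\cartier}^{\,s}(\bar\omega')=0$ for $s\ge2$ and, $\bar{\cartier}^{\,2}$ being an isomorphism, $\bar\omega'=0$. Iterating, $\omega\in\bigcap_t p^t\,\Omega_f^{(1)}(\mu)=\{0\}$, which also shows the sum in (i) is direct.

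\emph{The last inclusion, and the main obstacle.} Granting the decomposition for $f$ and for $f^\sigma$, write $\omega\in\hat\Omega_f(\mu)$ as $\omega_0+\omega_1$ with $\omega_0\in\Omega_f^{(1)}(\mu)$ and $\omega_1\in\fil_1(\mu)$. Then $\cartier(\omega_1)\in p\,\fil_1^\sigma(\mu)$ by the remark preceding the theorem, while pole-order reduction together with $\hat\Omega_{f^\sigma}(\mu)=\Omega_{f^\sigma}^{(1)}(\mu)\oplus\fil_1^\sigma(\mu)$ gives $\cartier(\omega_0)\in\Omega_{f^\sigma}^{(1)}(\mu)+p\,\hat\Omega_{f^\sigma}(\mu)=\Omega_{f^\sigma}^{(1)}(\mu)+p\,\fil_1^\sigma(\mu)$; adding, $\cartier(\omega)\in\Omega_{f^\sigma}^{(1)}(\mu)+p\,\fil_1^\sigma(\mu)$. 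The genuine difficulty is the pole-order reduction itself: the precise $p$-adic accounting showing that every pole of order $>1$ created by $\cartier$ is matched by a power of $p$, which is where Part~I's congruences and the face structure of $\mu$ are essential. A lesser but real issue is to run the successive approximation so that its limit lies in the \emph{exact} submodule $\fil_1(\mu)$ rather than merely in each finite truncation $\{\eta:\cartier^i(\eta)\in p^i\hat\Omega,\ i\le s\}$; this follows from the $p$-adic decay of the corrections and the closedness of the submodules involved.
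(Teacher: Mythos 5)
Your proposal is correct and follows essentially the same route as the paper's own argument (the proof of Theorem \ref{free-quotient-k}, which for $k=1$ reproduces the Part~I proof of this statement): the Cartier pole-order estimate $\cartier(\hat\Omega_f(\mu))\subset\Omega_{f^\sigma}^{(1)}(\mu)+p\hat\Omega_{f^\sigma}(\mu)$, invertibility of the Hasse--Witt matrix read as $\cartier$ being an isomorphism mod $p$ at every Frobenius twist, a successive-approximation (Cauchy sequence) construction of the $\Omega_f^{(1)}$-component, and the "divide by $p$ and iterate" argument for triviality of the intersection. The only difference is organizational: you invert the composites $\bar\cartier^{\,s}$ directly in the recursion, while the paper first establishes surjectivity/injectivity of $\cartier$ on the level-$1$ part and then runs the same correction scheme.
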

 
The Hasse--Witt matrix in this theorem is given by 
\[
HW_{\v u, \v v} = \text{the coefficient of } \v x^{p \v v - \v u} \text{ in } f(\v x)^{p-1}, \quad \v u, \v v \in \mu \cap \Z^n.
\]   
Restricted modulo $p$, the Cartier operation yields a map from $\Omega_{f}^{(1)}(\mu)$ to $\Omega_{f^\sigma}^{(1)}(\mu)$ which is represented in the respective bases by the matrix $HW=HW(\mu)$. The condition of invertibility of the Hasse--Witt matrix is thus equivalent to the Cartier operation being an isomorphism modulo $p$. 

The first main result of this paper is Theorem~\ref{free-quotient-k} which gives a generalization of 
Theorem~\ref{main-Part-I}. It states that for $1 \le k < p$ under certain condition, which we 
call the $k$th Hasse--Witt condition, one has a decomposition
\[
\hat\Omega_f(\mu)\cong \Omega_f^{(k)}(\mu)\oplus\fil_k(\mu),
\]
where $\Omega_f^{(k)}(\mu)$ is the free $R$-module generated by the elements $\frac{\v x^{\v u}}{f(\v x)^k}$ with $\v u \in (k \mu) \cap \Z^n$ and  
\[
\fil_k(\mu)=\{\omega\in\hat\Omega_f(\mu)\quad|\quad\cartier^s(\omega)\in p^{ks}\hat
\Omega_{f^{\sigma^s}}(\mu),\ s\ge1\}.
\]
Similarly to Theorem~\ref{main-Part-I}, we will also show that 
\[
\cartier\left(\hat\Omega_f(\mu)\right)\subset\Omega_{f^\sigma}^{(k)}(\mu)+p^k\fil_k^\sigma(\mu).
\]
The $k$th Hasse-Witt condition means that the image of the Cartier operation modulo $p^k$ is as maximal as possible.
In Section~\ref{sec:HW-matrices} it will be interpreted as invertibility of certain determinants, see Proposition~\ref{hwc-maximal} and Corollary~\ref{main-theorem-alt}. We will prove these results for submodules of $\Omega_f$ which are slightly more general that $\Omega_f(\mu)$. They will be introduced in Section~\ref{sec:dwork-crystals}. 

Let us now explain our motivation and some applications of the above results. Since $\cartier$ maps $\fil_k$
into $\fil_k^\sigma$, it induces an $R$-linear map 
\[
\cartier: \hat\Omega_f(\mu)/\fil_k \to \hat\Omega_{f^\sigma}(\mu)
/\fil_k^\sigma.
\]
When the $k$th Hasse-Witt condition holds, this is a map between
free modules of rank $\#((k \mu) \cap \Z^n)$. Note that we use the somewhat lazy, but suggestive, notation
$\hat\Omega_f(\mu)/\fil_1$ for what should be $\hat\Omega_f(\mu)/\fil_1(\mu)$.
The determination of the matrix of $\cartier$ between these finite rank modules
is a principal goal of our considerations.
In Parts I, II we have described this matrix for $k=1$ as a $p$-adic limit of quotients of Hasse-Witt type matrices, which leads to Dwork-type congruences. A recurring example of such congruences comes from 
$f=1-tg(\v x)$, where $g$ is a Laurent polynomial with coefficients in $\Z$ and
whose Newton polytope has $\v 0$ as the unique lattice point in its interior. We define the power series 
\[
F(t)=\sum_{n\ge0}g_nt^n,\quad g_n=\mbox{constant term in }g(\v x)^n.
\]
We also define its truncated versions $F_m(t)=\sum_{n=0}^{m-1}g_nt^n$. 
Taking $\mu=\Delta^\circ$, the interior of $\Delta$, the space $\Omega_f^{(1)}(\Delta^\circ)$
has rank 1 and is generated by $1/f$. The base ring $R$ is the $p$-adic completion of $\Z[t,1/F_p(t)]$.
Theorem~\ref{main-Part-I} states that
\be{beukers-vlasenko}
\cartier\left(\frac{1}{f}\right)\is \lambda(t)\frac{1}{f^\sigma}\mod{p\fil_1^\sigma}
\ee
for some $\lambda(t)\in R$.
In Part II we show that $\lambda(t)=F(t)/F(t^\sigma)$ and obtain the following result as an application.
\begin{theorem}
For any odd prime $p$ and any integers $m,s\ge1$ we have
\be{mellit-vlasenko}
\frac{F(t)}{F(t^\sigma)}\is\frac{F_{mp^s}(t)}{F_{mp^{s-1}}(t^\sigma)}\mod{p^s}.
\ee
\end{theorem}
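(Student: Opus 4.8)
The plan is to deduce the congruence from \eqref{beukers-vlasenko} (with $\lambda(t)=F(t)/F(t^\sigma)$) by passing to Laurent expansions in $\v x$. Since $F_{mp^{s-1}}(t^\sigma)$ is a unit in $R$ --- its reduction modulo $p$ has constant term $1$ --- it is enough to prove the polynomial congruence
\be{reduced-plan}
F_{mp^s}(t)\;\is\;\frac{F(t)}{F(t^\sigma)}\,F_{mp^{s-1}}(t^\sigma)\mod{p^s}.
\ee
It is convenient to regard $\cartier$ as the formal coefficient--extraction operator $\sum_{\v u}a_{\v u}\v x^{\v u}\mapsto\sum_{\v u}a_{p\v u}\v x^{\v u}$ on all Laurent series with coefficients in $R$: it is $R$-linear, it extends the Cartier operator on $\hat\Omega_f$, it preserves the functional $\la\,\cdot\,\ra$ picking out the coefficient of $\v x^{\v 0}$ as well as any power of $p$ dividing all coefficients, and it obeys the projection formula $\cartier\bigl(P(\v x^p)\,\omega\bigr)=P(\v x)\,\cartier(\omega)$ for a Laurent polynomial $P$. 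I would write $\theta_N=t^Ng(\v x)^N/f=\sum_{n\ge N}t^ng(\v x)^n$, noting $\la\theta_N\ra=F(t)-F_N(t)$ and, for its $\sigma$-twist $\theta_N^\sigma=(t^\sigma)^Ng(\v x)^N/f^\sigma$, $\la\theta_N^\sigma\ra=F(t^\sigma)-F_N(t^\sigma)$.

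The central step is the congruence of Laurent series
\be{key-plan}
\cartier\bigl(\theta_{mp^s}\bigr)\;\is\;\lambda(t)\,\theta^\sigma_{mp^{s-1}}\;+\;p\,t^{mp^s}g(\v x)^{mp^{s-1}}\eta\mod{p^s},
\ee
with $\eta\in\fil_1^\sigma$ the error term of \eqref{beukers-vlasenko}. I would establish it by combining three ingredients: the ``Frobenius'' congruence $g(\v x)^{mp^s}\is g(\v x^p)^{mp^{s-1}}\mod{p^s}$, obtained by writing $g^{mp^s}=(g^m)^{p^s}=\bigl((g^m)(\v x^p)+p\,(\text{Laurent polynomial})\bigr)^{p^{s-1}}$ and using that $v_p\!\bigl(\binom{p^{s-1}}{j}p^j\bigr)\ge s$ for all $j\ge1$; the projection formula applied to $g(\v x^p)^{mp^{s-1}}/f$, together with \eqref{beukers-vlasenko} itself; and finally $\sigma(t)^{mp^{s-1}}\is t^{mp^s}\mod{p^s}$ (the same binomial estimate). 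Applying $\la\,\cdot\,\ra$ to \eqref{key-plan}, using that $\cartier$ preserves it and that $\lambda(t)F(t^\sigma)=F(t)$, one obtains
\[
F_{mp^s}(t)\;\is\;\lambda(t)\,F_{mp^{s-1}}(t^\sigma)\;-\;p\,t^{mp^s}\,\bigl\langle g(\v x)^{mp^{s-1}}\eta\bigr\rangle\mod{p^s},
\]
so that \eqref{reduced-plan} is reduced to the claim $\bigl\langle g(\v x)^{mp^{s-1}}\eta\bigr\rangle\in p^{s-1}R$.

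This last claim is where the main difficulty lies, since a naive estimate gives only a single power of $p$; the point is to match two filtrations coefficient by coefficient. Writing $\eta=\sum_{\v u}b_{\v u}\v x^{\v u}$, one has $\bigl\langle g(\v x)^{mp^{s-1}}\eta\bigr\rangle=\sum_{\v v}c_{\v v}b_{-\v v}$, a finite sum, where $c_{\v v}$ is the coefficient of $\v x^{\v v}$ in $g(\v x)^{mp^{s-1}}$, vanishing outside $mp^{s-1}\Delta$. The term $\v v=\v 0$ contributes $0$ because $\la\eta\ra=0$ for $\eta\in\fil_1^\sigma$. For $\v v\neq\v 0$ let $p^j$ be the largest power of $p$ dividing every coordinate of $\v v$. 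On the one hand $b_{-\v v}\in p^jR$, because $\cartier^j\eta\in p^j\hat\Omega_{f^{\sigma^{j+1}}}$ by the definition of $\fil_1^\sigma$ and $b_{-\v v}$ is the coefficient of $\v x^{-\v v/p^j}$ in $\cartier^j\eta$. On the other hand, iterating the Frobenius congruence gives $g(\v x)^{mp^{s-1}}\is g(\v x^{p^{j+1}})^{mp^{s-2-j}}\mod{p^{s-1-j}}$ when $j\le s-2$; as the right--hand side involves only monomials in $\v x^{p^{j+1}}$ and $p^{j+1}\nmid\v v$, we get $c_{\v v}\in p^{\max(0,\,s-1-j)}\Z$. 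Hence $c_{\v v}b_{-\v v}\in p^{\,j+\max(0,\,s-1-j)}R\subset p^{s-1}R$ for every $\v v$, which finishes the argument. A last technical caveat: the auxiliary series $\theta_N$ and $g(\v x)^{mp^{s-1}}\eta$ need not lie in $\hat\Omega_f$, so the whole computation must be carried out with the formal operator $\cartier$, the module--theoretic input entering only through \eqref{beukers-vlasenko} and the membership $\eta\in\fil_1^\sigma$.
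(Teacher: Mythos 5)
The paper does not actually prove this theorem here: it is quoted from Part II (\cite[Theorem 3.2 and Corollary 4.4]{BeVl20II}), where the argument iterates $\cartier$ on $\hat\Omega_f(\Delta^\circ)/\fil_1$ and exhibits $\lambda(t)=F(t)/F(t^\sigma)$ as a $p$-adic limit of ratios of Hasse--Witt-type coefficients extracted from $f^{mp^s-1}$. Your argument is correct but takes a genuinely different, essentially self-contained route, closer in spirit to the original Mellit--Vlasenko proof: you apply $\cartier$ only once, to the tail $t^{mp^s}g(\v x)^{mp^s}/f$ of the geometric series for $1/f$, use the Frobenius congruence $g(\v x)^{mp^s}\equiv g(\v x^p)^{mp^{s-1}}\mod{p^s}$ and the projection formula to reduce to \eqref{beukers-vlasenko}, and then dispose of the error term $p\,t^{mp^s}\langle g(\v x)^{mp^{s-1}}\eta\rangle$ by a pairing estimate: for $\v v\ne\v 0$ with $p^j$ exactly dividing $\gcd(v_1,\ldots,v_n)$, the coefficient $b_{-\v v}$ of $\eta\in\fil_1^\sigma$ lies in $p^j\Z_p\lb t\rb$ (Lemma \ref{katzlemma} in its expansion-at-$\v 0$ form), while the coefficient of $\v x^{\v v}$ in $g^{mp^{s-1}}$ lies in $p^{s-1-j}\Z$ when $j\le s-2$, so every product lands in $p^{s-1}$. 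I have checked the binomial estimates and both divisibility claims; they hold. The route of Part II yields more (the full matrix of $\cartier$ modulo $\fil_1$ as a limit of explicit coefficient matrices, used elsewhere in the series), whereas yours is shorter and uses only the single congruence \eqref{beukers-vlasenko} as module-theoretic input. One small repair: $F_{mp^{s-1}}(t^\sigma)$ has constant term $1$ and is therefore a unit in $\Z_p\lb t\rb$, but not in general in the base ring $R$ (the $p$-adic completion of $\Z[t,1/F_p(t)]$); so either state and prove the cross-multiplied form $F_{mp^s}(t)\equiv\lambda(t)\,F_{mp^{s-1}}(t^\sigma)\mod{p^s}$ --- which is what your argument actually delivers --- or read the asserted congruence inside $\Z_p\lb t\rb$.
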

This theorem was first proven (with $m=1$ and $\sigma(t)=t^p$) in \cite{MeVl16}.
In Part II we give a proof which works for any $m$ and $\sigma$, see~\cite[Theorem 3.2 and Corollary 4.4]{BeVl20II}. 
The prototype for such congruences was given by Dwork in~\cite{dwork69} for the
case of the hypergeometric function $F(t)=F(1/2,1/2;1|t)$. 

There are many papers in which some very special form of \eqref{mellit-vlasenko} 
is proven, but modulo $p^{2s}$ or even higher powers $p^{3s},p^{4s},\ldots$. Usually this is
with $m=s=1$ and a specialization of $t$. These strengthenings are often called
{\it supercongruences}, a name coined by Jan Stienstra in the 1980's.
Such supercongruences arise only in very special situations
and not much is known about a general mechanism behind them. The original
motivation for the present paper was to discover such a mechanism.
For this purpose we will work in  $\Omega_f$ modulo $\fil_k$ with $k\ge2$. As we explain in Section~\ref{higher-derivatives}, elements $\omega \in \fil_k$ can be characterized by certain divisibility properties for the coefficients of their formal Laurent series expansion $\omega= \sum_{\v u} a_\v u \v x^\v u$. Namely, for each $\v u$ the coefficients $a_\v u$ is divisible by $g.c.d.(u_1,\ldots,u_n)^k$. For this reason we called $\fil_k$ the module of \emph{$k$th formal  derivatives}. Having an identity in $\Omega_f/\fil_k$, one obtains congruences by reading particular expansion coefficients on both sides. 

In Section \ref{symmetric-CY} we consider an application to $f=1-tg(\v x)$ where
$g$ are special Laurent polynomials with coefficients in $\Z$ and a large symmetry group. 

Our second main result is Theorem \ref{excellent-example2} which states
that there is a Frobenius lift $\sigma$ such that \eqref{beukers-vlasenko} 
with $\lambda(t)=F(t)/F(t^\sigma)$ holds modulo $p^2\fil_2$ instead of $p\fil_1$.
Any other Frobenius lift gives us far less elegant results. We call this special
Frobenius lift the {\it excellent lift}, a name already coined by Dwork. 
It is possible to derive supercongruences modulo $p^{2s}$ for the Laurent series expansion
coefficients of $1/f$ using Proposition~\ref{expansion-coeffs-k}. Unfortunately, we were
unable to prove a mod $p^{2s}$ version of \eqref{mellit-vlasenko},
which was one of the original motivations for this paper. A more precise formulation
of what we believe is true, is stated in Conjecture \ref{congruence-mod-p2s}.

In Theorem \ref{excellent-is-analytic} we show that the excellent Frobenius lift can be
written as a $p$-adic limit of rational functions in $t$ with powers of a prescribed polynomial in their denominators.
This has already been done by Dwork in \cite[\S7-\S8]{dwork69} in the context of families of elliptic curves,
where this is called Deligne's theorem. It is interesting to see
that this approximation property also holds outside the context of elliptic curves.

Although our study has not yet yielded the desired supercongruences, we did encounter
an application of our work to the problem of $p$-integrality of so-called instanton
numbers. This is a question which arose in the famous work of Candelas, De la Ossa, Green
and Parkes in their 1991 study of mirror symmetry within string theory. We
developed this application in a separate paper, see \cite{BeVlInstanton}.

{\bf Acknowledgement}. We are very grateful to the referee, who carefully read the article
and provided us with crucial comments. In particular the referee contributed ideas
towards the formulation of the proof of the crucial Theorem \ref{free-quotient-k}
and the referee's 
criticism on the example sections induced a complete rewrite of these sections.

\section{Dwork crystals}\label{sec:dwork-crystals}
In this section we define submodules of $\Omega_f$ whose $p$-adic completions are preserved
by the Cartier operation in a reasonable sense. An example is given by modules
$\Omega_f(\mu)\subset\Omega_f$ mentioned in the
introduction. Recall that $\mu$ is an open subset of $\Delta$ in the rudimentary topology on
$\Delta$ and $\Omega_f(\mu)$ consists of elements $(k-1)!A(\v x)/f(\v x)^k\in\Omega_f$
whose numerators A(\v x) are supported in $k\mu$ with $k\ge1$. Now
we would like to refine this construction by imposing further restrictions on polynomials
in the numerators. 

We remind the reader that $R$ is a $p$-adically complete ring with a fixed Frobenius lift $\sigma$.
For a Laurent polynomial $A(\v x)=\sum a_{\v u}\v x^{\v u}$, the polynomial
$\sum \sigma(a_{\v u})\v x^{\v u}$ is denoted by $A^\sigma$.
Our Cartier operator $\cartier$ acts on polynomials by the formula
\[
\cartier\left(\sum a_{\v u}\v x^{\v u}\right)=\sum a_{p\v u}\v x^{\v u}.
\]

\begin{definition}\label{cartier-f-compatible}

Let $L$ be an $R$-submodule of the module of Laurent polynomials with support in $\R_{\ge0}\Delta$
and coefficients in $R$. We denote by $L^\sigma$ the $R$-module generated by the elements $A^\sigma$ for $A\in L$.  We say that $L$ is \emph{($\sigma, f$)-compatible} if it satisfies the following assumptions.

\begin{itemize}
\item[(a)] $L$ is \emph{$p$-saturated}, which means that if $A(\v x)\in L$ and all coefficients of $A$ are
divisible by $p$, then $A(\v x)/p\in L$. The same holds for $L^{\sigma^i}$ for all $i \ge 1$.
\item[(b)] $f(\v x)L\subset L$.
\item[(c)] $\cartier(L)\subset L^\sigma$.
\end{itemize}
\end{definition}

With a small effort one can show
\begin{lemma}
If $L$ is ($\sigma,f$)-compatible, then $L^\sigma$ is ($\sigma,f^\sigma$)-compatible.
\end{lemma}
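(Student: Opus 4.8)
The statement to prove is: if $L$ is $(\sigma,f)$-compatible, then $L^\sigma$ is $(\sigma,f^\sigma)$-compatible. So I must verify conditions (a), (b), (c) of Definition~\ref{cartier-f-compatible} for $L^\sigma$ in place of $L$ and $f^\sigma$ in place of $f$. First note that $L^\sigma$ consists of Laurent polynomials supported in $\R_{\ge0}\Delta$ (the support of $A^\sigma$ equals the support of $A$), so $L^\sigma$ is again a submodule of the correct ambient module. The key observation underlying everything is the intertwining relation $\cartier\circ(\,\cdot\,)^\sigma = (\,\cdot\,)^\sigma$ composed appropriately — more precisely, for the two Cartier-type maps in play one has $\cartier_{f^\sigma}(A^\sigma) = (\cartier_f(A))^\sigma$, or in the even more basic form on coefficients: taking the $p\v u$-th coefficient commutes with applying $\sigma$ to all coefficients. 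Equivalently $\cartier(B^\sigma) = (\cartier\, B)^\sigma$ for any Laurent polynomial $B$.

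Now I would check the three conditions in order. For (a): by definition $L^\sigma$ is $p$-saturated as part of the hypothesis that $L$ is $(\sigma,f)$-compatible — condition (a) for $L$ explicitly requires $L^{\sigma^i}$ to be $p$-saturated for all $i\ge 1$. So the analogue of (a) for $L^\sigma$ asks that $(L^\sigma)^{\sigma^i} = L^{\sigma^{i+1}}$ be $p$-saturated for all $i\ge 1$, which is again part of the hypothesis; this step is immediate. For (b): I need $f^\sigma\cdot L^\sigma \subset L^\sigma$. Since $L^\sigma$ is generated as an $R$-module by elements $A^\sigma$ with $A\in L$, and since $f^\sigma A^\sigma = (fA)^\sigma$ while $fA\in L$ by condition (b) for $L$, each generator is multiplied into $L^\sigma$; because $f^\sigma$ multiplies an $R$-module into an $R$-module and $R$-linear combinations of elements $(fA)^\sigma$ lie in $L^\sigma$, we get $f^\sigma L^\sigma\subset L^\sigma$. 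One small wrinkle: $L^\sigma$ is the $R$-module generated by the $A^\sigma$, so a general element is $\sum r_i A_i^\sigma$ with $r_i\in R$; then $f^\sigma\sum r_i A_i^\sigma = \sum r_i (f A_i)^\sigma \in L^\sigma$ since each $fA_i\in L$. For (c): I need $\cartier(L^\sigma)\subset (L^\sigma)^\sigma = L^{\sigma^2}$. Take a general element $\sum r_i A_i^\sigma\in L^\sigma$; applying $\cartier$ and using $R$-linearity of $\cartier$ together with the intertwining relation $\cartier(A_i^\sigma) = (\cartier A_i)^\sigma$, we get $\sum \cartier(r_i A_i^\sigma)$. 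Here I must be careful: $\cartier$ is $R$-linear, so $\cartier(r_i A_i^\sigma) = r_i\cartier(A_i^\sigma)$? Actually the Cartier operator on polynomials as defined is only $R$-linear after the appropriate twist — but on the level of Laurent polynomials with the formula $\cartier(\sum a_\v u \v x^\v u) = \sum a_{p\v u}\v x^\v u$ it is genuinely $R$-linear since extracting a subset of coefficients is $R$-linear. So $\cartier(\sum r_i A_i^\sigma) = \sum r_i (\cartier A_i)^\sigma$, and $\cartier A_i\in L^\sigma$ by condition (c) for $L$, hence $(\cartier A_i)^\sigma\in (L^\sigma)^\sigma = L^{\sigma^2}$, and the $R$-linear combination stays in $L^{\sigma^2}$.

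The main (very mild) obstacle is bookkeeping with the two levels of $\sigma$-twisting: keeping straight that $(L^\sigma)^\sigma = L^{\sigma^2}$, that the Cartier operator relevant to $f^\sigma$ satisfies the intertwining identity with $\sigma$, and that "$p$-saturated for $L^{\sigma^i}$, all $i\ge1$" in hypothesis (a) is exactly what is needed for $L^\sigma$ to inherit (a). Since everything reduces to the single commutation $\cartier((\cdot)^\sigma) = ((\cdot)^\sigma)$ at the coefficient level and to $R$-linearity of all operations involved, there is no genuine difficulty — this is why the paper says "with a small effort". I would present the proof as three short verifications (a), (b), (c), preceded by a one-line remark recording the intertwining identity $\cartier(B^\sigma) = (\cartier B)^\sigma$, which is the only non-formal ingredient.
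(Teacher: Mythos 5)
Your proof is correct: the paper itself omits the argument (stating only that it follows ``with a small effort''), and your three verifications --- (a) via $(L^\sigma)^{\sigma^i}=L^{\sigma^{i+1}}$ and the hypothesis on $L^{\sigma^i}$, (b) via $f^\sigma A^\sigma=(fA)^\sigma$, and (c) via $R$-linearity of $\cartier$ on Laurent polynomials together with the intertwining identity $\cartier(A^\sigma)=(\cartier A)^\sigma$ --- are exactly the intended routine checks. The only cosmetic slip is the claim that $\supp(A^\sigma)$ \emph{equals} $\supp(A)$; containment is all that is needed (and all that is guaranteed), and it suffices for $L^\sigma$ to sit in the correct ambient module.
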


\begin{example}[\bf{Calabi-Yau families}]\label{example-admissible}
A very important example in our work is given by $f(\v x)=1-tg(\v x)$, where $g$
is a Laurent polynomial with coefficients in $\Z$ and a \emph{reflexive} Newton
polytope $\Delta$. In this paper we call the family of varieties $1-tg(\v x)=0$ a \emph{ Calabi-Yau family}.
The base ring $R$ will be either $\Z_p\lb t \rb$ or the $p$-adic completion of a ring of the form
$\Z_p[t,1/P(t)]$, where $P(t)\in\Z_p[t]$ with $P(0)\in\Z_p^\times$. The latter ring consists of so-called
$p$-adic analytic elements which appear in Dwork's work. Note that it can be embedded in $\Z_p\pow t$
using the power series expansion of $1/P(t)$.
\end{example}

We remind the reader that a lattice polytope $\Delta\subset\R^n$
is called {\it reflexive} if it is of maximal dimension and each of its codimension
1 faces can be given by an equation $\sum_{i=1}^na_ix_i=1$ 
with coefficients $a_i\in\Z$. It follows from this definition that
$\v 0$ is the unique lattice point in the interior $\Delta^\circ$.
Another property of reflexive polytopes is that
to every lattice point $\v u$ there is a unique integer $k\ge1$ such that
$\v u$ lies on the boundary of $k\Delta$.
This integer is denoted by $\deg(\v u)$, the degree of $\v u$. Let us remark that reflexivity of
the polytope implies that $\deg(p\v u)=p\deg(\v u)$. For a polynomial $A(\v x)$ we define its
degree $\deg(A)$ as the maximum of degrees of points in its support.

\begin{lemma}\label{CY-module}
Let $1-tg(\v x)=0$ be a Calabi-Yau family as in Example~\ref{example-admissible}. 
For the Frobenius lift $\sigma$ we assume that $t^\sigma/t^p\in R$.
Let $\Gamma\subset\Z^n$ be the lattice generated
by the support of $g$. Suppose $p$ does not divide $[\Z^n:\Gamma]$. Let $L$ be the $R$-module
of Laurent polynomials generated by $t^{\deg(\v u)}\v x^{\v u}$ with $\v u\in\Gamma$.
Then $L$ is ($\sigma,f$)-compatible. 
\end{lemma}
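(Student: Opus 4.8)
The plan is to verify the three defining properties (a), (b), (c) of a $(\sigma,f)$-compatible module directly from the explicit description of $L$, after recording a few preliminaries. First I would note that reflexivity of $\Delta$ forces $\v 0\in\Delta^\circ$, so $\R_{\ge0}\Delta=\R^n$ and the ambient support condition of Definition~\ref{cartier-f-compatible} is automatic. Second, the degree function on lattice points is nonnegative, satisfies $\deg(p\v u)=p\deg(\v u)$ (as recalled in the text), and is subadditive, $\deg(\v u+\v v)\le\deg(\v u)+\deg(\v v)$ (since $\deg(\v z)$ is the least integer $m$ with $\v z\in m\Delta$, and $\Delta$ is convex); in particular $\deg(\v w)\le1$ for every $\v w\in\supp(g)\subset\Delta$. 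Third, $L$ is precisely the set of Laurent polynomials $\sum_{\v u\in\Gamma}b_\v u\v x^\v u$ with $b_\v u\in t^{\deg(\v u)}R$, and applying $\sigma$ coefficientwise shows $L^{\sigma^i}=\{\sum_{\v u\in\Gamma}b_\v u\v x^\v u:b_\v u\in\sigma^i(t)^{\deg(\v u)}R\}$ for all $i\ge1$. Finally, in each admissible ring $R$ the quotient $R/pR$ is a domain in which the class of $t$ — and, since $\sigma^i(t)\is t^{p^i}\mod{pR}$, the class of each $\sigma^i(t)$ — is nonzero; and the hypothesis $t^\sigma/t^p\in R$ can be upgraded to invertibility of $t^\sigma/t^p$: reducing $t^\sigma=t^p\cdot(t^\sigma/t^p)$ modulo $p$ and using $t^\sigma\is t^p\mod{pR}$ together with $\bar t$ being a nonzerodivisor in $R/pR$ gives $t^\sigma/t^p\in1+pR$, hence also $t^p/t^\sigma\in R$.

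Granting these, I would check (b) by the short computation
\[
f\cdot t^{\deg(\v u)}\v x^\v u=t^{\deg(\v u)}\v x^\v u-\sum_\v w g_\v w\,t^{\deg(\v u)+1}\v x^{\v u+\v w}\qquad(\v u\in\Gamma),
\]
observing that the first term is a generator of $L$ and that for $\v w\in\supp(g)$ one has $\v u+\v w\in\Gamma$ and $\deg(\v u)+1-\deg(\v u+\v w)\ge0$, so $t^{\deg(\v u)+1}\v x^{\v u+\v w}=t^{\deg(\v u)+1-\deg(\v u+\v w)}\cdot\bigl(t^{\deg(\v u+\v w)}\v x^{\v u+\v w}\bigr)\in R\cdot L\subset L$.

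For (c) I would apply $\cartier$ to a generator $t^{\deg(\v u)}\v x^\v u$: if $\v u\notin p\Z^n$ the result is $0$, while if $\v u=p\v v$ then $\v v\in\Gamma$ — because $\Z^n/\Gamma$ is a finite group of order prime to $p$, so multiplication by $p$ is invertible on it — and the result equals $t^{\deg(p\v v)}\v x^\v v=t^{p\deg(\v v)}\v x^\v v$. Using $t^p/t^\sigma\in R$ and $\deg(\v v)\ge0$, I would rewrite $t^{p\deg(\v v)}=(t^p/t^\sigma)^{\deg(\v v)}\,\sigma(t)^{\deg(\v v)}\in\sigma(t)^{\deg(\v v)}R$, so the result lies in $L^\sigma$.

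For (a), given $A=\sum_{\v u\in\Gamma}b_\v u\v x^\v u\in L$ with every $b_\v u\in pR$, I would write $b_\v u=t^{\deg(\v u)}c_\v u$ with $c_\v u\in R$; then $t^{\deg(\v u)}c_\v u\in pR$ and nonzerodivisibility of $\bar t$ in $R/pR$ force $c_\v u\in pR$, so $A/p=\sum_{\v u}t^{\deg(\v u)}(c_\v u/p)\v x^\v u\in L$; the same argument with $\sigma^i(t)$ in place of $t$ handles $L^{\sigma^i}$. I expect the only step needing genuine care to be (c): it is there that both hypotheses of the lemma are used — $p\nmid[\Z^n:\Gamma]$ to keep the exponent vector in $\Gamma$ after dividing by $p$, and $t^\sigma/t^p\in R$ (upgraded to a unit) to trade the power of $t$ for a power of $t^\sigma$ — whereas (b), the $p$-saturation in (a), and the preliminaries above are routine bookkeeping with the degree function and the structure of $R$.
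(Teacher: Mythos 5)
Your proof is correct and follows essentially the same route as the paper: property (c) is checked on the generators exactly as in the paper (Cartier kills exponents not divisible by $p$, and $t^{p\deg(\v v)}\v x^{\v v}$ is a unit multiple of $(t^\sigma)^{\deg(\v v)}\v x^{\v v}$ because $t^\sigma/t^p\in 1+pR$ is invertible, with $p\nmid[\Z^n:\Gamma]$ keeping $\v v\in\Gamma$), while (a) and (b) are the steps the paper dismisses as obvious. Your extra care in upgrading $t^\sigma/t^p\in R$ to a unit in $1+pR$ and in using that $\bar t$ is a nonzerodivisor in $R/pR$ for $p$-saturation just makes explicit what the paper leaves implicit.
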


\begin{definition}\label{admissible-module}
The elements of the module $L$ defined in Lemma \ref{CY-module} are called \emph{ admissible polynomials}. 
\end{definition}

\begin{proof}[Proof of Lemma \ref{CY-module}]
Property (a) of Definition \ref{cartier-f-compatible} is obvious.
Since $f$ is admissible and the product of admissible polynomials is again
admissible, property (b) is also
clear. It suffices to verify property (c) on the generators $t^{\deg(\v u)}\v x^{\v u}$.
If $\v u$ is not divisible by $p$, its Cartier image is $0$. If $\v u=p\v v$,
then $\cartier(t^{\deg(p\v v)}\v x^{p\v v})=t^{p\deg(\v v)}\v x^{\v v}$. Up to
a factor in $R$ this equals $(t^\sigma)^{\deg(\v v)}\v x^{\v v}\in L^\sigma$. 
Indeed, since $R$ is $p$-adically complete the element $t^\sigma/t^p \in 1 + p R$ is invertible.
Furthermore, since $p$ does not divide $[\Z^n:\Gamma]$ the vector $\v v$ is again in $\Gamma$.
\end{proof}

\begin{definition}\label{dwork-crystal-pre-def}
Let $L$ be a ($\sigma,f$)-compatible $R$-module of Laurent polynomials and 
$\mu\subset\Delta$ an open set. The $R$-submodule of $\Omega_f$
generated by 
\[
(m-1)!\frac{A(\v x)}{f(\v x)^m},\quad \mbox{with $A\in L$ and ${\rm Supp}(A)\subset m\mu$}
\]
is denoted by $\Omega_{L,f}(\mu)$. Its $p$-adic completion is denoted by
$\hat\Omega_{L,f}(\mu)$.
\end{definition}

Let us now recall the Cartier operation $\cartier: \hat\Omega_f \to \hat \Omega_{f^\sigma}$ from Part I.
What we need is the following explicit formula for its action on rational functions which can be extracted from
the proof of \cite[Prop 3.3]{BeVl20I}. Let $G(\v x)$ be the polynomial defined by $pG(\v x)=f^\sigma(\v x^p)-f(\v x)^p$.
Then for any $m \ge 1$ and a polynomial $A(\v x)$ supported in $m \Delta$ the Cartier image of the respective
rational function is given by
\be{cartier-action}
\cartier\left((m-1)!\frac{A(\v x)}{f(\v x)^m}\right)=\sum_{r=0}^\infty\frac{p^r}{r!}
\times \frac{(m-1)!}{(\ceil{m/p}-1)!}\times(r+\ceil{m/p}-1)!
\frac{Q_r(\v x)}{f^\sigma(\v x)^{r+\ceil{m/p}}},
\ee
where
\[
Q_r(\v x)=\cartier(A(\v x)f(\v x)^{p\ceil{m/p}-m}G(\v x)^r).
\] 

\begin{proposition}\label{cartier-stable}
Let the assumptions be as in Definition~\ref{dwork-crystal-pre-def}.
The operator $\cartier$ maps $\hat\Omega_{L,f}(\mu)$ to $\hat\Omega_{L^\sigma,f^\sigma}(\mu)$.
\end{proposition}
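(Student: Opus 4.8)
The plan is to verify that $\cartier$ sends the generators of $\hat\Omega_{L,f}(\mu)$ into $\hat\Omega_{L^\sigma,f^\sigma}(\mu)$ by examining the explicit formula \eqref{cartier-action}, and then to extend this to the $p$-adic completion by continuity. So I begin with a generator $(m-1)!A(\v x)/f(\v x)^m$ with $A\in L$ and $\supp(A)\subset m\mu$. Applying \eqref{cartier-action}, I get a sum of terms
\[
\frac{p^r}{r!}\cdot\frac{(m-1)!}{(\ceil{m/p}-1)!}\cdot(r+\ceil{m/p}-1)!\cdot\frac{Q_r(\v x)}{f^\sigma(\v x)^{r+\ceil{m/p}}},\qquad Q_r(\v x)=\cartier\!\big(A(\v x)f(\v x)^{p\ceil{m/p}-m}G(\v x)^r\big).
\]
I need to check three things for each term: that $Q_r\in L^\sigma$, that $\supp(Q_r)$ lies in $(r+\ceil{m/p})\mu$, and that the scalar prefactor lies in $R$ (in particular is a $p$-adic integer), so that the term is a genuine element of $\Omega_{L^\sigma,f^\sigma}(\mu)$, and finally that the tail of the series tends to $0$ $p$-adically so the sum lies in $\hat\Omega_{L^\sigma,f^\sigma}(\mu)$.

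For the membership $Q_r\in L^\sigma$: the polynomial inside is $A\cdot f^{p\ceil{m/p}-m}\cdot G^r$. Here $A\in L$, and $f\in L$ by a standard normalization (or, if $f\notin L$ literally, one works with the fact that $f(\v x)L\subset L$ by property (b) of Definition~\ref{cartier-f-compatible}, so multiplication by $f^{p\ceil{m/p}-m}$ preserves $L$). The delicate factor is $G^r$. From $pG(\v x)=f^\sigma(\v x^p)-f(\v x)^p$ one sees $G$ has $R$-coefficients, and $f(\v x)^p\in L$ while $f^\sigma(\v x^p)$ needs to be related to $L$; the point is that $\cartier$ of the whole product is what we need, and $\cartier(L)\subset L^\sigma$ by property (c). More carefully, $A f^{p\ceil{m/p}-m}\in L$, and multiplying by $G^r$ keeps coefficients in $R$ with support in $\R_{\ge0}\Delta$; then one must argue $A f^{p\ceil{m/p}-m} G^r\in L$ — this uses that $pG=f^\sigma(\v x^p)-f^p$, that $f^p\in L$, and that $f^\sigma(\v x^p)$ applied after the other factors and then $\cartier$ reproduces something in $L^\sigma$. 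This is exactly the kind of bookkeeping already carried out in the proof of \cite[Prop.~3.3]{BeVl20I}; the new input here is only that $L$ (rather than all of $\Omega_f$) is preserved, which is precisely what ($\sigma,f$)-compatibility was designed to guarantee. For the support condition: $\supp(A)\subset m\mu$, multiplying by $f^{p\ceil{m/p}-m}$ (whose support is in $(p\ceil{m/p}-m)\Delta$, and $\mu$ being open and $f$ supported in $\Delta$) gives support in $p\ceil{m/p}\,\mu$ after absorbing the closed-set part, then $G^r$ has support in $r p\Delta$-ish, and applying $\cartier$ divides exponents by $p$; tracking this yields $\supp(Q_r)\subset (r+\ceil{m/p})\mu$. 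One must be slightly careful that $\mu$ is only a topological open set, not convex, but $m\mu$ is interpreted as in the definition of $\Omega_f(\mu)$ and the multiplicativity $m\mu + m'\mu$-type containments used already in Part~I apply verbatim.

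For the scalar: the prefactor $\frac{p^r}{r!}\cdot\frac{(m-1)!}{(\ceil{m/p}-1)!}\cdot(r+\ceil{m/p}-1)!$ must be checked to be a $p$-adic integer. The factor $p^r/r!$ has $p$-adic valuation $r-\sum_{i\ge1}\floor{r/p^i}\ge r(1-\frac1{p-1})\ge0$ for $p\ge3$ (the case relevant here, $p$ odd), and in fact grows, so the general term tends to $0$; the ratio $(m-1)!\,(r+\ceil{m/p}-1)!/(\ceil{m/p}-1)!$ is an integer times the relevant binomial-type quantity, and the combination with $1/r!$ gives an integer — this is the same valuation computation used to make sense of \eqref{cartier-action} in the first place. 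Given that $v_p(p^r/r!)\to\infty$, the series converges in the $p$-adic topology, so its sum lies in the $p$-adic completion $\hat\Omega_{L^\sigma,f^\sigma}(\mu)$, as each partial sum is in $\Omega_{L^\sigma,f^\sigma}(\mu)$. Finally, since $\cartier$ is $R$-linear and $p$-adically continuous (it reduces a Laurent expansion by $\v x^{\v u}\mapsto\v x^{\v u/p}$ and preserves $p$-divisibility), it extends from the dense submodule $\Omega_{L,f}(\mu)$ to all of $\hat\Omega_{L,f}(\mu)$ with image in $\hat\Omega_{L^\sigma,f^\sigma}(\mu)$.

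The main obstacle I anticipate is the membership $A(\v x)f(\v x)^{p\ceil{m/p}-m}G(\v x)^r\in L$ together with the corresponding support bound after applying $\cartier$ — i.e.\ disentangling how the auxiliary polynomial $G$, defined by $pG=f^\sigma(\v x^p)-f(\v x)^p$, interacts with $L$. The resolution is that one does not need $G\in L$: one needs only that $\cartier$ applied to the full product lands in $L^\sigma$, and property (c) plus the observation that $f^\sigma(\v x^p)$ multiplied by an element of $L$ and then hit by $\cartier$ gives $f^\sigma$ times an element of $L^\sigma$, which stays in $L^\sigma$ by ($\sigma,f^\sigma$)-compatibility of $L^\sigma$ (the Lemma preceding Example~\ref{example-admissible}). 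Everything else is the same valuation and support bookkeeping already present in Part~I, now simply restricted to the sub-object $L$.
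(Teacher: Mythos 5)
Your overall strategy coincides with the paper's: apply the explicit formula \eqref{cartier-action} to a generator $(m-1)!A/f^m$, show that each $Q_r$ lies in $L^\sigma$ with support in $(\ceil{m/p}+r)\mu$, check integrality and $p$-adic decay of the scalar prefactors, and pass to the completion by continuity. You also correctly spot the key trick that a factor $f^\sigma(\v x^p)^{r_2}$ passes through $\cartier$ as $f^\sigma(\v x)^{r_2}$, which is then absorbed using $(\sigma,f^\sigma)$-compatibility of $L^\sigma$. However, there is a genuine gap in your treatment of $G^r$. The identity you quote is $pG=f^\sigma(\v x^p)-f(\v x)^p$, so it is $p^rG(\v x)^r$, not $G(\v x)^r$, that is an integer linear combination of products $f(\v x)^{pr_1}f^\sigma(\v x^p)^{r_2}$ with $r_1+r_2=r$. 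Your intermediate claim that $A\,f^{p\ceil{m/p}-m}G^r\in L$ is both unjustified and unnecessary: $L$ is only assumed stable under multiplication by $f$, not by $G$, and $f(\v x)^p\in L$ is likewise not a hypothesis (only $f^pL\subset L$ follows from property (b)). Running your own argument faithfully, what you actually obtain is
\[
p^rQ_r=\cartier\bigl(A(\v x)\,f(\v x)^{p\ceil{m/p}-m}\,(pG(\v x))^r\bigr)\in L^\sigma,
\]
via properties (b), (c) and the compatibility of $L^\sigma$ — but not yet $Q_r\in L^\sigma$.

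The missing step, and precisely the reason property (a) ($p$-saturation) is built into Definition~\ref{cartier-f-compatible}, is the division by $p^r$: since every coefficient of $p^rQ_r$ is divisible by $p^r$ and $L^\sigma$ is $p$-saturated, one concludes $Q_r\in L^\sigma$. You never invoke property (a) anywhere in your proposal, so as written the argument cannot close this loop; this is the one place where the new definition does real work beyond the bookkeeping of Part I. The remaining ingredients — the support estimate using openness of $\mu$, the observation that $\frac{p^r}{r!}\cdot\frac{(m-1)!}{(\ceil{m/p}-1)!}$ is a $p$-adic integer whose valuation grows with $r$ (giving convergence in $\hat\Omega_{L^\sigma,f^\sigma}(\mu)$), and the extension from $\Omega_{L,f}(\mu)$ to its completion — are essentially correct and agree with the paper.
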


\begin{proof}
Let $A \in L$ and $Supp(A) \subset m \mu$. We will show that each term $Q_r$ in~\eqref{cartier-action}
belongs to $L^\sigma$ and has support in $(\ceil{m/p}+r)\mu$. 
Multiply $Q_r$ with $p^r$ and observe that $p^rG(\v x)^r$ is an integer
linear combination of products $f(\v x)^{pr_1}
f^\sigma(\v x^p)^{r_2}$ with $r_1+r_2=r$. Notice that
\[
\cartier(A(\v x)f(\v x)^{p\ceil{m/p}-m}f(\v x)^{pr_1}f^\sigma(\v x^p)^{r_2})
=f^\sigma(\v x)^{r_2}\cartier(A(\v x)f(\v x)^{p\ceil{m/p}-m+ p r_1}).
\]
By $(\sigma,f)$-compatibility of $L$ we see that the last argument of $\cartier$
lies in $L$ and, since $\mu$ is open, it is supported in $(p\ceil{m/p}+pr_1)\mu$. Its $\cartier$-image
lies in $L^\sigma$ with support in $(\ceil{m/p}+r_1)\mu$. Since $L^\sigma$ is
($\sigma,f^\sigma$)-compatible, we see that after multiplication by $f^\sigma(\v x)^{r_2}$
we get an element in $L^\sigma$ with support in $(\ceil{m/p}+r_1+r_2)\mu=(\ceil{m/p}+r)\mu$.
So we find that $p^rQ_r$ lies in $L^\sigma$. Since $L^\sigma$ is $p$-saturated
we conclude that $Q_r\in L^\sigma$. 
\end{proof}

\begin{definition}[{\bf Dwork crystal and level submodules}]\label{dwork-crystal-def}
A \emph{ Dwork crystal} is an $R$-module of the form $M=\hat\Omega_{L,f}(\mu)$,
where $L$ is a ($\sigma,f$)-compatible $R$-module of Laurent polynomials
and $\mu\subset\Delta$ is an open subset. 

The Dwork crystal $\hat\Omega_{L^\sigma,f^\sigma}(\mu)$ is denoted by $M^\sigma$, and Proposition
\ref{cartier-stable} states that the Cartier operation $\cartier:\hat\Omega_f\to
\hat\Omega_{f^\sigma}$ restricts to $\cartier:M\to M^\sigma$.

For a fixed $k\ge1$
the $R$-module of functions $(k-1)!\frac{A(\v x)}{f(\v x)^k}\in M$ is called the part of $M$ of level $k$,
or simply the \emph{$k$-part} of $M$, and denoted by $M(k)$.
\end{definition}

Note that when $L$ is simply the $R$-module of Laurent polynomials we recover the crystals
$M=\hat\Omega_f(\mu)$ that were studied in Parts I and II. Our main
application concerns Calabi-Yau families. 

\begin{definition}[\bf Calabi-Yau crystal]\label{CY-crystal}
Let $f(\v x)=1-t g(\v x)$ be a polynomial as in Example~\ref{example-admissible} with a reflexive Newton polytope $\Delta$.
Let $L \subset R[x_1^{\pm 1},\ldots,x_n^{\pm 1}]$ be the respective module of admissible Laurent polynomials.
Then the Dwork module $M=\hat\Omega_{L,f}(\Delta^\circ)$ is
called a \emph{ Calabi-Yau crystal} (or \emph{CY-crystal}).
\end{definition}

\begin{remark}\label{free-level-k-part}
Let $p\ge k$. Then there is a natural inclusion 
$\iota_k:M(k-1)\to M(k)$
given by 
\[
\iota_k:\ (k-2)!\frac{A(\v x)}{f^{k-1}}\mapsto\frac{1}{k-1}(k-1)!
\frac{A(\v x)f}{f^{k}}.
\]
Note that for all examples of $L$ mentioned above the modules $M(k)$ are free.
\end{remark}

\section{Higher formal derivatives}\label{higher-derivatives}

In this section we discuss the filtration $\fil_k$ which was mentioned in the Introduction. It is defined as follows.

\begin{definition}\label{definition-fil}
Let $M$ be a Dwork crystal. We define
\[
\fil_k M:=\{\omega\in M|
\cartier^s(\omega)\in p^{sk}M^{\sigma^s}\}
\mbox{ for all $s\ge1$}.
\]
\end{definition}

We denote $\fil_k \hat\Omega_f$ by $\fil_k$. It is clear that $\fil_kM = \fil_k \cap M$. We will now see that the elements of $\fil_k$ can be characterized by divisibility properties of coefficients in their formal Laurent expansion. 

In \S 2 of Part I we expanded rational functions $\omega=A(\v x)/f(\v x)^m$ as a formal Laurent series with respect to
a vertex $\v b\in\Delta$. When $Supp(A) \subset m \Delta$, the result of such expansion is supported in the positive cone spanned by the shifted polytope $\Delta-\v b$:
\[
\omega = \sum_{\v u \in C(\Delta - \v b) \cap \Z^n} a_\v u \v x^\v u.
\]
The expansion coefficients $a_\v u = a_\v u(\omega)$ lie in the extension of the ring $R$ by $f_{\v b}^{-1}$,
a multiplicative inverse of the coefficient of $f(\v x)$ at the vertex $\v b$. For simplicity,
let us assume that $f_\v b$ is a unit in $R$. Then the procedure of formal expansion at $\v b$
defines an embedding of $\hat\Omega_f$ into the ring of formal series supported in the cone:
\[
\Omega_{\rm formal} = \{ \sum_{\v u \in C(\Delta - \v b)  \cap \Z^n} a_\v u \v x^\v u \;|\; a_\v u \in R  \}.
\] 
We then considered the Cartier operation $\cartier: \Omega_{\rm formal} \to \Omega_{\rm formal}$ given by 
$\sum a_\v u \v x^\v u \to \sum a_{p \v u} \v x^\v u$.
It was shown in Part I that this operation restricts to an $R$-linear map 
$\cartier: \hat \Omega_f \to \hat \Omega_{f^\sigma}$. This restricted map can be described by
formula~\eqref{cartier-action}, which shows its independence of the choice of the expansion vertex~$\v b$.


Denote the $R$-module of formal Laurent series by $\Omega_{\rm formal}$.
We assumed that the coefficient of $\v x^{\v b}$ in $f$ is a unit in $R$. 

\begin{definition}
Denote by $d^k\Omega_{\rm formal}$ the $R$-module generated by elements of the
form $\theta_{i_1}\cdots\theta_{i_k}\eta$, where $\theta_i=x_i\frac{\partial}{\partial x_i}$
and $\eta\in\Omega_{\rm formal}$. We call it the module of formal $k$-th derivatives. 
\end{definition}

We have the following analogue of \cite[Lemma 2.2]{BeVl20I} (Katz's lemma).

\begin{lemma}\label{katzlemma}
For a series $\eta = \sum a_\v u \v x^\v u \in\Omega_{\rm formal}$ the following conditions are equivalent:
\begin{itemize}
\item[(i)] $\eta \in d^k\Omega_{\rm formal}$
\item[(ii)] $a_\v u \in g.c.d.(u_1,\ldots,u_n)^k R$ for each $\v u \in C(\Delta-\v b)\cap \Z^n$; 
\item[(iii)] $\cartier^s(\eta)\is0\mod{p^{ks}\Omega_{\rm formal}}$ for all integers $s\ge1$.
\end{itemize}
\end{lemma}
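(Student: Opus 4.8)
The plan is to prove the cycle of implications $(ii)\Rightarrow(i)\Rightarrow(iii)\Rightarrow(ii)$, which is the cleanest way to close the loop, mimicking the proof of Katz's lemma \cite[Lemma 2.2]{BeVl20I} but keeping track of the exponent $k$.

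\emph{$(ii)\Rightarrow(i)$.} Given $\eta=\sum a_\v u\v x^\v u$ with $g.c.d.(\v u)^k\mid a_\v u$, I want to write $\eta$ as an $R$-linear combination of $k$-fold applications of the $\theta_i$. The key observation is elementary number theory: for a nonzero lattice vector $\v u$ with $d=g.c.d.(u_1,\dots,u_n)$, the ideal of $R$ generated by the monomials $u_{i_1}\cdots u_{i_k}$ over all multi-indices equals $d^kR$, since already the monomials $u_i^k$ together generate $d^kR$ (as $g.c.d.(u_1^k,\dots,u_n^k)=d^k$). Hence each coefficient $a_\v u$ can be written as $\sum c_{\v u,(i_1,\dots,i_k)}\, u_{i_1}\cdots u_{i_k}$ with $c_{\v u,(i_1,\dots,i_k)}\in R$, and then $\eta=\sum_{(i_1,\dots,i_k)}\theta_{i_1}\cdots\theta_{i_k}\bigl(\sum_\v u c_{\v u,(i_1,\dots,i_k)}\v x^\v u\bigr)$, where each inner series lies in $\Omega_{\rm formal}$ because its support is contained in $C(\Delta-\v b)\cap\Z^n$. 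One should note that the $\v u=\v 0$ term (the constant term of $\eta$) has coefficient divisible by $g.c.d.(\v 0)^k=0$, hence is zero, so it causes no trouble; this matches the convention implicit in the statement.

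\emph{$(i)\Rightarrow(iii)$.} It suffices to check this on a generator $\eta=\theta_{i_1}\cdots\theta_{i_k}\zeta$ with $\zeta=\sum b_\v u\v x^\v u\in\Omega_{\rm formal}$, so that $a_\v u=u_{i_1}\cdots u_{i_k}b_\v u$. Applying $\cartier^s$ picks out the coefficients at $p^s\v u$, namely $a_{p^s\v u}=(p^s u_{i_1})\cdots(p^s u_{i_k})b_{p^s\v u}=p^{ks}\,u_{i_1}\cdots u_{i_k}b_{p^s\v u}$, which is manifestly in $p^{ks}R$. Hence $\cartier^s(\eta)\in p^{ks}\Omega_{\rm formal}$ for all $s\ge1$.

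\emph{$(iii)\Rightarrow(ii)$.} This is the step I expect to be the main obstacle, because it is where the $p$-adic completeness and the interaction of the prime $p$ with the arbitrary $g.c.d.$ must be exploited, exactly as in the proof of Katz's lemma. Fix $\v u\ne\v 0$ and write $d=g.c.d.(u_1,\dots,u_n)$; I must show $d^k\mid a_\v u$ in $R$. Write $d=p^e d'$ with $p\nmid d'$. The condition $(iii)$ with $s=e$ already gives, reading the coefficient at $\v u=p^e\v u'$ where $\v u=p^e\v u'$ with... more carefully: factor out the largest power $p^e$ dividing \emph{every} $u_i$, so $\v u=p^e\v w$ with $p\nmid g.c.d.(\v w)$; then $\cartier^e(\eta)$ has $\v w$-coefficient $a_\v u$, and since this lies in $p^{ke}\Omega_{\rm formal}$ we get $p^{ke}\mid a_\v u$. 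It remains to show $d'^k\mid a_\v u$. Here one argues prime by prime for each prime $\ell\ne p$ dividing $d'$: one uses that $\cartier$ is conjugate, via the substitution $\v x\mapsto\v x\zeta$ for a suitable root of unity (adjoined $p$-adically, possible since $p\ne\ell$), to an operator that detects $\ell$-divisibility — this is precisely Dwork's trick reproduced in Part~I. Concretely, combining $(iii)$ with the analogous statements after twisting by $\ell$-power roots of unity, together with $p$-adic completeness of $R$ to pass from congruences mod $p^N$ for all $N$ to actual divisibility, yields $\ell^{k\cdot v_\ell(d)}\mid a_\v u$ for each such $\ell$; multiplying these divisibilities over all prime divisors of $d$ gives $d^k\mid a_\v u$. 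I would lean on the corresponding argument in \cite[Lemma 2.2]{BeVl20I} and indicate only the modifications needed to carry the exponent $k$ through.
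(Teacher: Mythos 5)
Your implications (ii)$\Rightarrow$(i) and (i)$\Rightarrow$(iii) are fine and essentially coincide with the paper's treatment: the paper proves (ii)$\Rightarrow$(i) by induction on $k$, writing $\mathrm{g.c.d.}(u_1,\dots,u_n)=\sum_i k_i(\v u)u_i$ and peeling off one $\theta_i$ at a time, while you do it in one shot using $d^k\in(u_1^k,\dots,u_n^k)$; both are valid, and your remark that the constant term must vanish is a correct reading of the statement. The genuine gap is in the last part of your (iii)$\Rightarrow$(ii). There is no root-of-unity ``Dwork trick'' in Part~I for extracting $\ell$-divisibility of coefficients for primes $\ell\neq p$, and no such mechanism could work as described: the Cartier operator cannot ``detect'' divisibility by $\ell$ in $R$, because that divisibility is automatic. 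Since $R$ is $p$-adically complete, the structure map $\Z\to R$ extends to $\Z_p\to R$, so $R$ is a $\Z_p$-algebra and every prime $\ell\neq p$ -- hence the prime-to-$p$ part $d'$ of the g.c.d. -- is a unit in $R$. Therefore $\mathrm{g.c.d.}(u_1,\dots,u_n)^kR=p^{k\min_i v_p(u_i)}R$, condition (ii) constrains only the $p$-part, and the argument you already gave (write $\v u=p^e\v w$ with $p\nmid\mathrm{g.c.d.}(\v w)$ and apply (iii) with $s=e$, plus $\cap_s p^sR=\{0\}$ for $\v u=\v 0$) finishes the implication.

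This is exactly how the paper dispatches the step, in one sentence: equivalence of (ii) and (iii) ``is clear because $R$ is a $\Z_p$-algebra and all primes other than $p$ are invertible in $\Z_p$.'' So the part you flagged as the main obstacle is in fact trivial, and the twisting argument you lean on is a dead end; replace it by the unit observation and your proof is complete and otherwise parallel to the paper's.
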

\begin{proof}  (i) immediately implies (ii). We will prove that (ii) implies (i) by induction on $k$.
The case $k=0$ is obvious with the convention $d^0\Omega_{\rm formal}=\Omega_{\rm formal}$.
Let $k \ge 1$ and the claim is true for $k-1$. For each $\v u \in C(\Delta - \v b) \cap \Z^n$ we
choose some integers $k_i(\v u)$, $i=1,\ldots,n$ so that $g.c.d.(u_1,\ldots,u_n) = \sum_{i=1}^n k_i(\v u) u_i$ and
$b_\v u \in R$ such that $a_\v u = g.c.d.(u_1,\ldots,u_n) b_\v u$. 
Define $\eta_i = \sum k_i(\v u) b_{\v u} \v x^\v u$ for $i=1,\ldots,n$. 
By the inductional assumption we have $\eta_i \in d^{k-1}\Omega_{\rm formal}$.
Since $\eta=\sum_{i=1}^n \theta_i(\eta_i)$, we conclude that $\eta \in d^k\Omega_{\rm formal}$.

Equivalence of (ii) and (iii) is clear because $R$ is a $\Z_p$-algebra and all primes other
than $p$ are invertible in $\Z_p$.
\end{proof}

\begin{corollary}\label{formal-2-fil-k}
With the notations as above we have
\[
\fil_k M=M\cap d^k\Omega_{\rm formal}.
\]
\end{corollary}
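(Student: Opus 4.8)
The plan is to prove the two inclusions separately, using Lemma~\ref{katzlemma} to translate the $\fil_k$ condition into a statement about formal expansion coefficients. Fix an expansion vertex $\v b\in\Delta$ at which $f_\v b$ is a unit, so that $M$ embeds into $\Omega_{\rm formal}$ and the Cartier operation on $M$ is the restriction of the coefficient map $\sum a_\v u\v x^\v u\mapsto\sum a_{p\v u}\v x^\v u$ on $\Omega_{\rm formal}$, as recalled above.

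First I would show $\fil_k M\subseteq M\cap d^k\Omega_{\rm formal}$. Take $\omega\in\fil_k M$. By definition $\cartier^s(\omega)\in p^{ks}M^{\sigma^s}\subseteq p^{ks}\Omega_{\rm formal}$ for every $s\ge 1$; here I use that $M^{\sigma^s}$ is a submodule of $\Omega_{\rm formal}$ for the same expansion vertex (the coefficients of $f^{\sigma^s}$ at $\v b$ are still units since $\sigma$ is a Frobenius lift and $R$ is $p$-adically complete). Thus $\omega$, viewed in $\Omega_{\rm formal}$, satisfies condition (iii) of Lemma~\ref{katzlemma}, hence condition (i), i.e.\ $\omega\in d^k\Omega_{\rm formal}$. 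Since also $\omega\in M$, we get $\omega\in M\cap d^k\Omega_{\rm formal}$.

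For the reverse inclusion, take $\omega\in M\cap d^k\Omega_{\rm formal}$. By Lemma~\ref{katzlemma}, $\cartier^s(\omega)\in p^{ks}\Omega_{\rm formal}$ for all $s\ge1$. On the other hand, by Proposition~\ref{cartier-stable} (applied iteratively, noting that $M^\sigma$ is again a Dwork crystal and $(\cartier)^s$ maps $M$ into $M^{\sigma^s}$), we have $\cartier^s(\omega)\in M^{\sigma^s}$. So $\cartier^s(\omega)\in M^{\sigma^s}\cap p^{ks}\Omega_{\rm formal}$, and the point is to conclude $\cartier^s(\omega)\in p^{ks}M^{\sigma^s}$. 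This is exactly where $p$-saturation enters: property (a) of Definition~\ref{cartier-f-compatible} says $L^{\sigma^s}$ is $p$-saturated, and one checks this forces $\hat\Omega_{L^{\sigma^s},f^{\sigma^s}}(\mu)=M^{\sigma^s}$ to be $p$-saturated inside $\Omega_{\rm formal}$ as well — if all expansion coefficients of an element of $M^{\sigma^s}$ are divisible by $p$, then dividing by $p$ stays in $M^{\sigma^s}$. Iterating $ks$ times, $\cartier^s(\omega)/p^{ks}\in M^{\sigma^s}$, i.e.\ $\cartier^s(\omega)\in p^{ks}M^{\sigma^s}$, so $\omega\in\fil_k M$.

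The main obstacle is the $p$-saturation step in the second inclusion: one must verify that $p$-saturation of the polynomial module $L$ (and its $\sigma$-twists), together with $p$-adic completeness, genuinely upgrades to $p$-saturation of the completed module $\hat\Omega_{L,f}(\mu)$ realized as a submodule of formal Laurent series. This amounts to controlling the $p$-divisibility of a function $(m-1)!A(\v x)/f(\v x)^m$ in terms of $p$-divisibility of $A$ (using that $f$ is a unit times an element with unit constant coefficient at $\v b$, and that $(m-1)!$ may carry powers of $p$ — but here the hypothesis ensures the relevant factorials are harmless, or one argues on the level of coefficients directly). Once this saturation fact is in place, everything else is a direct application of Lemma~\ref{katzlemma} and Proposition~\ref{cartier-stable}.
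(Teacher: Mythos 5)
Your overall skeleton --- both inclusions read off from Lemma~\ref{katzlemma}, with Proposition~\ref{cartier-stable} supplying $\cartier^s(\omega)\in M^{\sigma^s}$ --- is exactly the derivation the paper has in mind (it gives no separate argument for the corollary), and your first inclusion is correct. The problem sits at the step you yourself flag in the second inclusion: you reduce it to the claim that $M^{\sigma^s}=\hat\Omega_{L^{\sigma^s},f^{\sigma^s}}(\mu)$ is $p$-saturated inside $\Omega_{\rm formal}$, i.e.\ that an element of $M^{\sigma^s}$ all of whose expansion coefficients lie in $pR$ lies in $pM^{\sigma^s}$, and you assert that this follows from $p$-saturation of $L^{\sigma^s}$ (Definition~\ref{cartier-f-compatible}(a)). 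That implication is false, and the culprit is precisely the factorial normalization that your closing paragraph hopes is ``harmless'' (the corollary contains no hypothesis bounding the pole order by $p$). Take $n=1$, $R=\Z_p$ with $\sigma=\mathrm{id}$, $f=1-x$, $\mu=\Delta=[0,1]$, $L=R[x]$, so $M=\hat\Omega_f$. Then $\Omega_f$ is free on $1$ and $(m-1)!\,(1-x)^{-m}$, $m\ge1$, and $M$ is its completion. The element $\omega=p!/(1-x)^{p+1}\in M$ has all expansion coefficients $p!\binom{n+p}{p}\in p\Z_p$, yet $\omega\notin pM$: its image in $M/pM\cong\Omega_f/p\Omega_f$ is a basis vector, hence nonzero. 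Equivalently $(p-1)!/(1-x)^{p+1}\notin M$, because modulo $p$ every element of $M$ expands like a rational function with pole order at most $p$ at $x=1$ (as $(m-1)!\is0\mod{p}$ for $m>p$), while multiplying by $(1-x)^{p+1}$ shows the series of $(p-1)!/(1-x)^{p+1}$ is not of that shape mod $p$. Your suggested mechanism --- multiply up by $f^{m}$, divide the numerator by $p$ using saturation of $L$ --- only shows that the numerator over $p$ lies in $L$; it does not return you to the $(m-1)!$-normalized generators of $\Omega_{L,f}(\mu)$, and that is exactly where it breaks.

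Consequently the inclusion $M\cap d^k\Omega_{\rm formal}\subseteq\fil_k M$ cannot be obtained by dividing $\cartier^s(\omega)$ by $p^{ks}$ coefficientwise inside $M^{\sigma^s}$: one needs either a divisibility statement restricted to the Cartier images of such $\omega$ (which does not follow from property (a)) or a different mechanism altogether. Note that the weaker statement which the paper actually exploits later is directly provable without any of this: if $\omega$ is a $k$-fold derivative $\theta_{i_1}\cdots\theta_{i_k}\eta$ of an element $\eta\in M$ (and $\theta_i(M)\subseteq M$, as in the examples), then $\cartier^s(\omega)=p^{ks}\theta_{i_1}\cdots\theta_{i_k}\cartier^s(\eta)\in p^{ks}M^{\sigma^s}$ by $\cartier\circ\theta_i=p\,\theta_i\circ\cartier$; this is how membership in $\fil_2$ is obtained, e.g., for $\theta^2(1/f)$ in Section~\ref{sec:example1}. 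The paper itself is silent on the point you flagged (just as it declares $\fil_k M=\fil_k\cap M$ to be clear), so your instinct that there is a hidden step is sound; but your justification of that step is incorrect, and with it the proposed proof of the reverse inclusion is incomplete.
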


For this reason we call $\fil_k M$ the \emph{submodule of formal $k$-th derivatives} in $M$. 
Note that the definition of $d^k \Omega_{\rm formal}$ depends on the expansion vertex $\v b$,
but Definition~\ref{definition-fil} shows that the module $\fil_k$ is independent of this choice.
In the case of Calabi-Yau crystals with $f(\v x)=1-t g(\v x)$ the coefficients of formal
expansion belong to the bigger ring $R[t^{-1}]$, and one could extend the above discussion to this case.
However, one can proceed slightly differently 
by looking at the formal expansions at $\v 0$. Such expansions are given by
\[
\frac{A(\v x)}{f(\v x)^m} = A(\v x) \sum_{k=0}^{\infty} \binom{k+m-1}{m-1} t^m g(\v x)^m = \sum_{\v u \in \Z^n} c_\v u \v x^\v u, 
\]
where $c_\v u \in \Z_p\lb t \rb$. The above series converges $t$-adically. On such expansions our Cartier operation again acts by $\cartier \left( \sum_{\v u \in \Z^n} c_\v u \v x^\v u \right) = \sum_{\v u \in \Z^n} c_{p \v u} \v x^\v u$. This was explained in~\cite[\S2]{BeVl20II}. One particular consequence of this formula is that for $\omega = \sum_{\v u \in \Z^n} c_\v u \v x^\v u \in \fil_k$ one has $c_\v u \in g.c.d.(u_1,\ldots,u_n)^k\Z_p\lb t \rb$ for every $\v u \in \Z^n$. We will exploit these congruences in Section~\ref{symmetric-CY}.

Finally let us mention the following direct consequence of Definition \ref{definition-fil}.

\begin{corollary}\label{cartier-div-by-k-on-fil-k}
The operator $\cartier$ sends $\fil_k M$ to 
$p^k\fil_k M^\sigma$.
\end{corollary}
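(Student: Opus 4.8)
The plan is to deduce Corollary~\ref{cartier-div-by-k-on-fil-k} directly from Definition~\ref{definition-fil} together with Proposition~\ref{cartier-stable}, which guarantees that $\cartier$ maps $M$ into $M^\sigma$ (and hence $M^{\sigma^s}$ into $M^{\sigma^{s+1}}$ for every $s$). Let $\omega\in\fil_k M$. By definition this means $\cartier^s(\omega)\in p^{sk}M^{\sigma^s}$ for all $s\ge1$; in particular, taking $s=1$ shows $\cartier(\omega)\in p^k M^\sigma$. So the only thing left to check is that $\cartier(\omega)$, viewed as an element of $M^\sigma$, again lies in the filtration step $\fil_k M^\sigma$, i.e. that $\cartier^s(\cartier(\omega))\in p^{sk}(M^\sigma)^{\sigma^s}$ for all $s\ge1$.

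The key step is a reindexing. For any $s\ge1$ we have $\cartier^s(\cartier(\omega))=\cartier^{s+1}(\omega)$, and $(M^\sigma)^{\sigma^s}=M^{\sigma^{s+1}}$. Since $\omega\in\fil_k M$, applying the defining condition at the index $s+1$ gives $\cartier^{s+1}(\omega)\in p^{(s+1)k}M^{\sigma^{s+1}}$, which is contained in $p^{sk}M^{\sigma^{s+1}}$. Therefore $\cartier^s(\cartier(\omega))\in p^{sk}(M^\sigma)^{\sigma^s}$ for every $s\ge1$, so by Definition~\ref{definition-fil} applied to the Dwork crystal $M^\sigma$ we get $\cartier(\omega)\in\fil_k M^\sigma$. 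Combined with $\cartier(\omega)\in p^k M^\sigma$ from the $s=1$ case, one wants to upgrade this to $\cartier(\omega)\in p^k\fil_k M^\sigma$.

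The one genuine subtlety — and the main (if minor) obstacle — is precisely this last upgrade: knowing $\cartier(\omega)\in p^kM^\sigma$ and $\cartier(\omega)\in\fil_kM^\sigma$ separately does not formally give $\cartier(\omega)\in p^k\fil_kM^\sigma$ unless one can divide by $p^k$ inside the filtration. Here one invokes $p$-saturation. Write $\cartier(\omega)=p^k\eta$ with $\eta\in M^\sigma$ (possible since $R$ is a domain in which $\cap_s p^sR=\{0\}$, or directly since $M^\sigma$ sits inside a $p$-saturated crystal). Then for each $s\ge1$ we have $p^k\cdot\cartier^s(\eta)=\cartier^s(\cartier(\omega))=\cartier^{s+1}(\omega)\in p^{(s+1)k}M^{\sigma^{s+1}}$, and because $M^{\sigma^{s+1}}$ is $p$-saturated (equivalently, torsion-free over $\Z_p$) we may cancel $p^k$ to conclude $\cartier^s(\eta)\in p^{sk}(M^\sigma)^{\sigma^s}$. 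Hence $\eta\in\fil_kM^\sigma$ and $\cartier(\omega)=p^k\eta\in p^k\fil_kM^\sigma$, as claimed. The whole argument is a formal manipulation of the definition; no new analytic input beyond Proposition~\ref{cartier-stable} and $p$-saturation is needed.
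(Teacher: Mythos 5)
Your argument is correct and is precisely the routine unpacking of Definition~\ref{definition-fil} that the paper has in mind when it states the corollary as a direct consequence (no proof is given there): write $\cartier(\omega)=p^k\eta$ with $\eta\in M^\sigma$ and cancel $p^k$ in $p^k\cartier^s(\eta)=\cartier^{s+1}(\omega)\in p^{(s+1)k}M^{\sigma^{s+1}}$ to get $\eta\in\fil_k M^\sigma$. One small terminological nit: the cancellation uses injectivity of multiplication by $p$ on $M^{\sigma^{s+1}}$ (torsion-freeness, which holds because these crystals are $p$-adic completions of modules of rational functions over a characteristic-zero domain), which is not literally the $p$-saturation property of the coefficient module $L$ from Definition~\ref{cartier-f-compatible}(a) that you cite, though both properties hold here and the step goes through.
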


We suspect that the rank of $M/\fil_k$ is always finite,
but we shall only be able to show this under conditions which are parallel to the
invertibility of the Hasse-Witt matrix in the case $k=1$. This will be the first main result of this paper. 

\section{The main theorem}\label{sec:main-thm}
Throughout this section we let $M$ be a Dwork crystal as in Definition~\ref{dwork-crystal-def}. 
Suppose that $p>k$. We determine the image of $\cartier$ modulo $p^k$ using \eqref{cartier-action}. 
Observe that
\begin{eqnarray*}
\ord_p\left(\frac{p^r}{r!}\frac{(m-1)!}{(\ceil{m/p}-1)!}\right)&>&r-\frac{r}{p-1}+\ceil{m/p}-1\\
&\ge&(r+\ceil{m/p}-1)\left(1-\frac{1}{p-1}\right)\\
&\ge&(r+\ceil{m/p}-1)\left(1-\frac{1}{k}\right).
\end{eqnarray*}
So whenever $r+\ceil{m/p}\ge k+1$ this order is $>k(1-1/k)=k-1$. Hence this order is $\ge k$.
Consequently, the terms in \eqref{cartier-action} will be $0$ modulo $p^k$ whenever $r+\ceil{m/p}\ge k+1$.
Therefore we conclude that
\be{cartier-image-mod-p-k}
\cartier(M)\subset M^\sigma(k)+p^k M^\sigma. 
\ee

\begin{definition}\label{maximal-cartier-image}
We will say that $\cartier$ is maximal on $M(k)$ if
\begin{itemize}
\item[(i)] To every $\omega'\in M^\sigma(k)$ there exists $\omega\in M(k)$
such that $\cartier(\omega)\is p^{k-1}\omega'\mod{p^k M^\sigma}$.
\item[(ii)] If $\omega\in M(k)$ and $\cartier(\omega)\is0\mod{p^k M^\sigma}$ then $\omega\in pM(k)$.
\end{itemize}
\end{definition}

Notice that when $k=1$, maximality of $\cartier$ on $M(1)$ comes down to
$\cartier:M(1)\mod{p}\to M^\sigma(1)\mod{p}$ being an isomorphism. This is the pivotal assumption in Theorem~\ref{main-Part-I} as we mentioned in the Introduction. In Section~\ref{sec:HW-matrices} we will give a simple criterion for the maximality of 
$\cartier$ on $M(k)$, see Proposition~\ref{hwc-maximal}. It is in terms of a generalization of
the Hasse-Witt determinant condition for $k=1$. Our criterion yields maximality of of $\cartier$ on $M^{\sigma^i}(k)$ for all $i$ simultaneously.

\begin{theorem}\label{free-quotient-k}
Let $M$ be a Dwork crystal. Suppose that $p>k$ and that
$\cartier$ is maximal on $M^{\sigma^i}(\ell)$ for all $i\ge0$ and $\ell=1,2,\ldots,k$. Then 
\be{k-decomp}
M\cong M(k)\oplus\fil_k M.
\ee
\end{theorem}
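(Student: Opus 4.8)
The plan is to build the decomposition by successive approximation, climbing from level $1$ to level $k$. The key structural input is the inclusion~\eqref{cartier-image-mod-p-k}, which tells us $\cartier(M)\subset M^\sigma(k)+p^k M^\sigma$; combined with the maximality hypothesis this should let us correct any element of $M$ modulo higher and higher powers of $p$ so that its ``defect'' lies in $\fil_k M$. Concretely, I would first prove the two assertions (existence of the splitting and directness of the sum) separately. For directness, suppose $\omega\in M(k)\cap\fil_k M$. Then $\cartier(\omega)\in p^k M^\sigma$ by definition of $\fil_k$, so by maximality part (ii) applied at level $k$ we get $\omega\in pM(k)$, say $\omega=p\omega_1$; since $\fil_k M$ is $p$-saturated (inherited from $p$-saturation of $M$), $\omega_1$ again lies in $M(k)\cap\fil_k M$, and iterating gives $\omega\in\bigcap_s p^s M=\{0\}$. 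Here I should double-check that $M(k)$ and $\fil_k M$ are genuinely $p$-saturated inside $M$; $M(k)$ is, because dividing $(k-1)!A/f^k$ by $p$ divides $A$ by $p$ and $L$ is $p$-saturated, and $\fil_k M$ is by the coefficient characterization in Lemma~\ref{katzlemma}.

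For the existence of the splitting, given $\omega\in M$ I want to write $\omega=\omega^{(k)}+\phi$ with $\omega^{(k)}\in M(k)$ and $\phi\in\fil_k M$. The idea is to produce $\omega^{(k)}$ as a $p$-adically convergent sum of corrections $\sum_{j\ge0}p^j\eta_j$ with $\eta_j\in M(k)$, chosen so that $\cartier^s(\omega-\sum_j p^j\eta_j)\in p^{ks}M^{\sigma^s}$ for every $s$. The first correction $\eta_0$ is extracted from~\eqref{cartier-image-mod-p-k}: modulo $p^k M^\sigma$ the element $\cartier(\omega)$ lies in $M^\sigma(k)$, and I want to lift this back. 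This is where the hypotheses at \emph{all} intermediate levels $\ell=1,\dots,k$ enter, and where I expect the real work to be: peeling off the $M^\sigma(\ell)$-components for $\ell<k$ one at a time (using maximality part (i) at each level to realize a prescribed image $p^{\ell-1}\omega'$ modulo $p^\ell$, then the inclusion $\iota_\ell:M(\ell-1)\hookrightarrow M(\ell)$ of Remark~\ref{free-level-k-part} to move everything into level $k$), so that after finitely many such steps the residual has $\cartier$-image genuinely in $p^k M^\sigma$. One then iterates the whole construction for $\cartier^2,\cartier^3,\dots$, or — cleaner — sets up a single inductive statement: for each $N\ge0$ there is $\omega^{(k)}_N\in M(k)$ with $\omega-\omega^{(k)}_N\in \fil_k M + p^N M$, and $\omega^{(k)}_{N+1}\equiv\omega^{(k)}_N\bmod p^N M(k)$, so the sequence converges $p$-adically to the desired $\omega^{(k)}$.

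The main obstacle, as I see it, is the bookkeeping in the inductive correction step: one must verify that after subtracting the level-$\ell$ corrections the new element still lies in $M$ (not just $\hat\Omega_f$), that the estimate~\eqref{cartier-image-mod-p-k} can be re-applied to it, and that the process is uniform enough in $s$ that a single limit $\omega^{(k)}$ works for all powers $\cartier^s$ at once. The bound in~\eqref{cartier-image-mod-p-k} was derived for the single operator $\cartier$; to run the induction one needs the analogous containment $\cartier^s(M)\subset M^{\sigma^s}(k)+p^k M^{\sigma^s}$, which either follows by iterating~\eqref{cartier-image-mod-p-k} together with Corollary~\ref{cartier-div-by-k-on-fil-k} and the level-$k$ maximality at each stage, or must be established directly. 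Once the uniform lifting is in place, directness kills ambiguity and the two pieces assemble into~\eqref{k-decomp}; the identification of $M/\fil_k M$ with the free module $M(k)$ is then immediate from the decomposition and freeness of $M(k)$ noted in Remark~\ref{free-level-k-part}.
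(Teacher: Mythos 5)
Your directness argument has a genuine gap, and it sits exactly where the paper's induction on $k$ does its work. After writing $\omega=p\omega_1$ with $\omega_1\in M(k)$, you claim $\omega_1\in\fil_k M$ because ``$\fil_k M$ is $p$-saturated\dots by the coefficient characterization in Lemma~\ref{katzlemma}''. That characterization requires $a_{\v u}\in \gcd(u_1,\ldots,u_n)^kR$; writing $\gcd(\v u)=p^am$ with $p\nmid m$, the condition is $a_{\v u}\in p^{ak}R$, and from $pa_{\v u}\in p^{ak}R$ you only get $a_{\v u}\in p^{ak-1}R$ when $a\ge 1$. Equivalently, $\cartier^s(p\omega_1)\in p^{ks}M^{\sigma^s}$ only gives $\cartier^s(\omega_1)\in p^{ks-1}M^{\sigma^s}$, which is not enough to re-apply Definition~\ref{maximal-cartier-image}(ii) and iterate. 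In fact, $p$-saturation of $\fil_kM$ inside $M$ amounts to $M/\fil_kM$ being $p$-torsion free, which is essentially a consequence of the theorem you are proving, so assuming it is circular. The paper circumvents this by induction on $k$: directness is reduced to $\fil_{k-1}M(k)\cap\fil_kM=\{0\}$ (its Step 4), and the divisibility statement needed there (if $x\in\fil_{k-1}M(k)$ and $x/p\in M(k)$ then $x/p\in\fil_{k-1}M(k)$) is extracted from the already established decomposition $M\cong M(k-1)\oplus\fil_{k-1}M$ at level $k-1$, not from any saturation property of $\fil_k$ itself.

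The existence half of your plan is likewise only a sketch at precisely the points where the content lies: one must produce corrections in $M(k)$ that control all iterates $\cartier^s$ simultaneously, and the ``peel off the $M^\sigma(\ell)$-components one level at a time'' step is neither carried out nor clearly convergent to a defect lying in $\fil_k$ rather than merely in a lower filtration step. The paper's resolution is to prove the splitting $\fil_{k-1}M\cong\fil_{k-1}M(k)\oplus\fil_kM$ inside the module $\fil_{k-1}M$, which is already split off by the inductive hypothesis, using the renormalized operator $\phi_p=p^{1-k}\cartier$: Step 1 locates its image, Step 2 proves surjectivity onto $\fil_{k-1}M^\sigma(k)$ modulo $p$ by an iterative construction combining maximality (i) at level $k$ with the level-$(k-1)$ decomposition, and Step 3 runs a Cauchy-sequence approximation whose limit gives the $\fil_k$-component. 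So your instinct that the lower levels $\ell<k$ must enter is right, but in the paper they enter through the inductive decomposition $M\cong M(k-1)\oplus\fil_{k-1}M$ rather than through level-by-level peeling of a single Cartier image; without that scaffolding the convergence and bookkeeping problems you yourself flag remain unresolved.
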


\begin{proof}
We are going to prove the decomposition~\eqref{k-decomp} simultaneously for all $M^{\sigma^i}$
and use induction on $k$.
The case $k=0$, when we agree that $\fil_0 M=M$ and $M(0)$ is zero,
is trivial.

Suppose now that $k>0$ and our claim is true for $k-1$. We thus have
\be{induction-assum}
M^{\sigma^i}\cong M^{\sigma^i}(k-1)\oplus \fil_{k-1}M^{\sigma^i} \text{ for all }\; i\ge 0.
\ee
We will show that 
\be{fil-split-0}
\fil_{k-1}M\cong\fil_{k-1}M(k)\oplus \fil_k M.
\ee
Here $\fil_{k-1}M(k)=\fil_{k-1}M\cap M(k)$, the formal $k-1$-st derivatives
in $M(k)$. 
As a consequence of the induction hypothesis and \eqref{fil-split-0} we get
\[
M\cong M(k-1)\oplus\fil_{k-1}M\cong
M(k-1)\oplus\fil_{k-1}M(k)\oplus\fil_k M.
\]
Restriction of \eqref{induction-assum} to $M(k)$ yields
$M(k)\cong M(k-1)\oplus\fil_{k-1}M(k)$ (as $R$-modules) 
and hence we find that $M\cong M(k)\oplus\fil_k M$, as desired.

We divide the proof of~\eqref{fil-split-0} in 4 steps in which we study the action of the operator
$\phi_p:=p^{1-k}\cartier$. Since $\cartier(x)\in p^{k-1}\fil_{k-1}M^\sigma$
for every $x\in\fil_{k-1}M$, we see that $\phi_p$ maps $\fil_{k-1}M$ to $\fil_{k-1}M^\sigma$.

{\bf Step 1}. The map $\phi_p$ maps $\fil_{k-1}M$ to $\fil_{k-1}M^\sigma(k)+p\fil_{k-1}M^\sigma$. Analogous statements hold for $M^{\sigma^i}$ for all $i\ge0$.
Let $x\in\fil_{k-1}M$. According to 
\eqref{cartier-image-mod-p-k}, we have $\cartier(x)\in M^\sigma(k)+p^kM^\sigma$. Using
the decomposition $M^\sigma\cong M^\sigma(k-1)\oplus\fil_{k-1}M^\sigma$ given in \eqref{induction-assum} this implies  
$\cartier(x)\in M^\sigma(k)+p^k\fil_{k-1}M^\sigma$. Using the observation $\cartier(x)\in p^{k-1}\fil_{k-1}M^\sigma$
we deduce that $\cartier(x)\in p^{k-1}\fil_{k-1}M^\sigma(k)+p^k\fil_{k-1}M^\sigma$.
Step 1 follows after division by $p^{k-1}$.

{\bf Step 2}. 
The map $\phi_p:\fil_{k-1}M(k)\to \fil_{k-1}M^\sigma(k)\mod{p\fil_{k-1}M^\sigma}$ is surjective. 
A similar statement holds for $M^{\sigma^i}$ for all $i\ge0$. 

Let $x\in\fil_{k-1}M^\sigma(k)$. Then according to Definition \ref{maximal-cartier-image}(i) there
exists $y_0\in M(k)$ such that $\cartier(y_0)\is p^{k-1}x\mod{p^kM^\sigma}$. Using decomposition~\eqref{induction-assum} in $M^\sigma$ we find $x_0\in M^\sigma(k-1)$ such that
\[
\cartier(y_0) \is p^{k-1}x + p^k x_0 \mod {p^k \fil_{k-1}M^\sigma}.
\]
We now construct by induction in $s \ge 0$ a sequence of elements $y_s \in M(k)$ and $x_s \in M^\sigma(k-1)$ such that
\be{step-2-induction}
\cartier\left(\sum_{i=0}^s p^i y_i\right) \is p^{k-1}x + p^{k+s} x_s \mod {p^k \fil_{k-1}M^\sigma}.
\ee
The case $s=0$ is already done. Suppose that~\eqref{step-2-induction} holds for some $s$. By a similar argument as for $s=0$ there exist $y_{s+1}\in M(k)$ and $x_{s+1}\in M^\sigma(k-1)$ such that
\[
\cartier(y_{s+1}) \is -p^{k-1}x_{s} + p^k x_{s+1} \mod {p^k \fil_{k-1}M^\sigma}.
\]
Multiplying this identity by $p^{s+1}$ and add to~\eqref{step-2-induction}, we obtain the desired identity for $s+1$.  

Using \eqref{induction-assum} there exists $x_1\in M^\sigma(k-1)$ such that $\cartier(y_0)\is p^{k-1}x+p^kx_1\mod{p^k\fil_{k-1}M^\sigma}$.
Setting $y=\sum_{i\ge0}p^iy_i$ we have $\cartier(y)\is p^{k-1}x\mod{p^k\fil_{k-1}M^\sigma}$.
Since $x\in\fil_{k-1}M^\sigma$ we see that $y\in\fil_{k-1}M$. By construction we know $y\in M(k)$,
hence $y\in\fil_{k-1}M(k)$. After division by $p^{k-1}$ we obtain $\phi_p(y)\is x\mod{p\fil_{k-1}M^\sigma}$,
which finishes the proof of Step 2.

{\bf Step 3}.
We have $\fil_{k-1}M=\fil_{k-1}M(k)+\fil_k M$ and similar statements for $M^{\sigma^i}$. 

Take $x \in \sF_{k-1}M$. Let us construct a Cauchy sequence $y_s\in\fil_{k-1}M(k)$
for $s\ge 0$ such that
\begin{itemize}
\item[(a)] $y_{s+1}-y_s\in p^{s}\fil_{k-1}M(k)$
\item[(b)] $\phi_p^s(x-y_s)\is 0\mod{p^s\fil_{k-1}M^{\sigma^s}}$.
\end{itemize}
We use induction on $s$. Take $y_0=0$. Then we see that the case $s=0$ becomes trivial.

Now suppose that $s\ge0$ and that (b) is proven for this value of $s$. Apply 
$\phi_p$ to $p^{-s}\phi_p^s(x-y_s)$. Then Step 1 shows the existence of
$\eta\in\fil_{k-1}M^{\sigma^{s+1}}(k)$ such that 
\[
p^{-s}\phi_p^{s+1}(x-y_s)\is \eta\mod{p\fil_{k-1}M^{\sigma^{s+1}}}.
\]
Step 2 (surjectivity of $\phi_p$, hence $\phi_p^{s+1}$) shows the existence of $\delta\in\fil_{k-1}M(k)$ such that
$\phi_p^{s+1}(\delta)\is\eta\mod{p\fil_{k-1}M^{\sigma^{s+1}}}$. Hence 
$\phi_p^{s+1}(p^s\delta)\is p^s\eta\mod{p^{s+1}\fil_{k-1}M^{\sigma^{s+1}}}$ and
\[
\phi_p^{s+1}(x-y_s-p^s\delta)\is 0\mod{p^{s+1}\fil_{k-1}M^{\sigma^{s+1}}}.
\]
So by taking $y_{s+1}=y_s+p^s\delta$ our induction step is completed. 
Let $y=\lim_{s\to\infty}y_s$. Then 
\[
\phi_p^s(x-y)\is\phi_p^s(x-y_s)\is0\mod{p^s\fil_{k-1}M^{\sigma^s}}
\]
for all $s\ge0$.
Hence $\cartier^s(x-y)\is0\mod{p^{sk}\fil_{k-1}M^{\sigma^s}}$ for all $s\ge1$. We conclude that 
$x-y\in\fil_kM$.

{\bf Step 4}. We show that $\fil_{k-1}M(k)\cap\fil_kM=\{0\}$. 

We first remark that if $x\in\fil_{k-1}M(k)$
and $x/p\in M(k)$, then $x/p\in\fil_{k-1}M(k)$. Namely, by our induction
hypothesis there exist unique $x_1\in M(k-1)$ and $x_2\in\fil_{k-1}M$
such that $x/p=x_1+x_2$. Hence $x-px_2=px_1$ with
$x-px_2\in\fil_{k-1}M$ and $px_1\in M(k-1)$. Since $M(k-1)\cap\fil_{k-1}M=\{0\}$ we
conclude that $px_1=0$, hence $x_1=0$. So $x/p=x_2\in\fil_{k-1}M(k)$.

The second remark is that property (ii) of Definition \ref{maximal-cartier-image} implies that
\[
\phi_p:\fil_{k-1}M(k)\mod{p\fil_{k-1}M}\to \fil_{k-1}M^\sigma(k)\mod{p\fil_{k-1}M^\sigma}
\]
is injective. Together with Step 2 we see that this map is an isomorphism.

Suppose we have non-zero $x\in\fil_{k-1}M(k)\cap\fil_kM$.
Let $s$ be the maximal integer such that $p^{-s}x\in M(k)$. By our first
remark we have that $p^{-s}x\in\fil_{k-1}M(k)$. Apply
$\phi_p^{s+1}$ to $p^{-s}x$. Because $x\in\fil_kM$ we get
\[
\phi_p^{s+1}(p^{-s}x)\is0\mod{p\fil_kM^{\sigma^{s+1}}}.
\]
Since $\phi_p^{s+1}$ is an isomorphism mod $p$ we find that $p^{-s}x$ is
divisible by $p$.
This contradicts our choice of $s$ and concludes our proof of Step 4.
\end{proof}

A consequence which will often be used is the following.

\begin{corollary}\label{decompose-cartier-image}
Let notations be as in Theorem \ref{free-quotient-k}. Then
\be{cartier-image-decomp}
\cartier(M) \subset M^\sigma(k) + p^k \fil_k M^\sigma.
\ee
\end{corollary}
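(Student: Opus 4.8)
The plan is to combine the mod-$p^k$ image bound~\eqref{cartier-image-mod-p-k} with the direct sum decomposition~\eqref{k-decomp} just proved. Concretely, let $\omega\in M$ be arbitrary. Starting from \eqref{cartier-image-mod-p-k}, namely $\cartier(M)\subset M^\sigma(k)+p^kM^\sigma$, write $\cartier(\omega)=\alpha+p^k\beta$ with $\alpha\in M^\sigma(k)$ and $\beta\in M^\sigma$. The only thing to improve is the error term: I want $\beta$ to be chosen in $\fil_k M^\sigma$ rather than in all of $M^\sigma$.

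To do that, first I would apply the decomposition~\eqref{k-decomp} to the crystal $M^\sigma$ (which is legitimate because the hypotheses of Theorem~\ref{free-quotient-k} are imposed on $M^{\sigma^i}$ for all $i\ge0$, hence in particular on $M^\sigma$ and its own twists): $M^\sigma\cong M^\sigma(k)\oplus\fil_k M^\sigma$. Decompose the term $\beta$ according to this splitting, $\beta=\beta_1+\beta_2$ with $\beta_1\in M^\sigma(k)$ and $\beta_2\in\fil_k M^\sigma$. Then $\cartier(\omega)=(\alpha+p^k\beta_1)+p^k\beta_2$. Since $\alpha\in M^\sigma(k)$ and $p^k\beta_1\in M^\sigma(k)$ (the level-$k$ part is an $R$-submodule, so it is closed under multiplication by $p^k$), the element $\alpha+p^k\beta_1$ lies in $M^\sigma(k)$, while $p^k\beta_2\in p^k\fil_k M^\sigma$. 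This gives exactly $\cartier(\omega)\in M^\sigma(k)+p^k\fil_k M^\sigma$, which is~\eqref{cartier-image-decomp}.

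I do not expect any real obstacle here: the corollary is essentially a bookkeeping consequence of the two displayed facts, the one subtlety being to make sure the decomposition~\eqref{k-decomp} is available for the twisted crystal $M^\sigma$, which is guaranteed by the ``for all $i\ge0$'' quantifier in the hypothesis of Theorem~\ref{free-quotient-k}. One should also note that $M^\sigma(k)$ is an honest $R$-submodule of $M^\sigma$ so that $p^k M^\sigma(k)\subset M^\sigma(k)$; this is immediate from Definition~\ref{dwork-crystal-def}. Thus the whole argument is two or three lines.
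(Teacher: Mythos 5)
Your argument is correct and is essentially the paper's own proof: the paper likewise combines \eqref{cartier-image-mod-p-k} with the decomposition $M^\sigma= M^\sigma(k)+\fil_k M^\sigma$ (valid for the twist $M^\sigma$ because the theorem's hypotheses and proof cover all $M^{\sigma^i}$), absorbing $p^kM^\sigma(k)$ into $M^\sigma(k)$. Your elementwise phrasing with $\beta=\beta_1+\beta_2$ is just the same computation written out for a single $\omega$.
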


\begin{proof}
From~\eqref{cartier-image-mod-p-k}  and Theorem \ref{free-quotient-k} we get:
\[\bal
\cartier(M) \subset M^\sigma(k) + p^{k} M^\sigma
&= M^\sigma(k) + p^{k} \left(M^\sigma(k) + \fil_{k}M^\sigma\right) \\
&= M^\sigma(k) + p^k \fil_{k} M^\sigma,
\eal\]
as asserted.
\end{proof}

\section{Hasse-Witt matrices}\label{sec:HW-matrices}
In order to use Theorem \ref{free-quotient-k} we shall require a practical criterion
to verify maximality of $\cartier$ on $M(k)$ in the sense of Definition
\ref{maximal-cartier-image}. Throughout this section we assume that $k<p$. In Part I we used the Hasse-Witt
matrix which is essentially the matrix corresponding to the Cartier map modulo $p$.
Formula \eqref{cartier-image-mod-p-k} now shows that $\cartier$ maps $M(k)$ to $M^\sigma(k)$
modulo $p^k$. In this section we will assume that the $M(k)$ are free modules.
This is the case in all our examples of Dwork crystals, see Remark \ref{free-level-k-part}.
Consider \eqref{cartier-action} with $m=k$. The estimate given at the beginning of 
Section~\ref{sec:main-thm} shows that the terms with $r \ge k$ on the right vanish  modulo $p^k$.
Rewriting the remaining terms we obtain that
\[ 
\cartier\left(\frac{A(\v x)}{f(\v x)^k}\right)\is
\frac{1}{f^\sigma(\v x)^k}\cartier\left(A(\v x)f(\v x)^{p-k}
\sum_{r=0}^{k-1}(f^\sigma(\v x^p)-f(\v x)^p)^r f^\sigma(\v x^p)^{k-r-1}\right)
 \mod{p^k M^\sigma}.
\]
Let us define
\be{definition-Fk}
F^{(k)}(\v x)=f(\v x)^{p-k}
\sum_{r=0}^{k-1}(f^\sigma(\v x^p)-f(\v x)^p)^r f^\sigma(\v x^p)^{k-r-1},
\ee
then
\[
\cartier\left(\frac{A(\v x)}{f(\v x)^k}\right)\is\frac{1}{f^\sigma(\v x)^k}
\cartier(A(\v x)F^{(k)}(\v x))\mod{p^k M^\sigma}.
\]
To write down a matrix for $\cartier\mod{p^k}$,
we assume that $M(k)$ has
a free basis $\frac{b_i(\v x)}{f(\v x)^k}, i=1,2,\ldots,m_k$ where $m_k$ is
the rank of $M(k)$.
We also assume that $\frac{b_i^\sigma(\v x)}{f^\sigma(\v x)^k}$ form a basis for $M^\sigma(k)$.

\begin{definition}\label{higherHasseWitt}
The $k$-th Hasse-Witt matrix is the $m_k\times m_k$-matrix $HW^{(k)}(M)$
with entries given by the formula
\[
HW^{(k)}(M)_{i,j}=\mbox{coefficient of $b_j^\sigma(\v x)$ in }
\cartier(b_i(\v x)F^{(k)}(\v x)). 
\]
\end{definition}

This definition makes sense due to the following lemma.
Recall that $M=\hat\Omega_{L,f}(\mu)$ for a $(\sigma,f)$-compatible submodule $L \subseteq R[x_1^{\pm 1},\ldots,x_n^{\pm 1}]$ and an open subset $\mu \subseteq \Delta$.

\begin{lemma} For $A \in L$ with $Supp(A) \subset k\mu$ the polynomial $\cartier(A F^{(k)})$ belongs to $L^\sigma$ and has support in $k \mu$. 
\end{lemma}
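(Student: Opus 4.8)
The plan is to show the two claims about $\cartier(AF^{(k)})$ — membership in $L^\sigma$ and support in $k\mu$ — by unwinding the definition~\eqref{definition-Fk} of $F^{(k)}$ and invoking $(\sigma,f)$-compatibility term by term, exactly in the spirit of the proof of Proposition~\ref{cartier-stable}. First I would expand $(f^\sigma(\v x^p)-f(\v x)^p)^r$ by the binomial theorem into an integer linear combination of monomials $f(\v x)^{pr_1}f^\sigma(\v x^p)^{r_2}$ with $r_1+r_2=r$. Plugging this into $F^{(k)}$ shows that $AF^{(k)}$ is an integer linear combination of terms of the shape
\[
A(\v x)\,f(\v x)^{p-k+pr_1}\,f^\sigma(\v x^p)^{\,k-1-r},
\]
where $0\le r\le k-1$ and $0\le r_1\le r$. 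Since $k<p$ we have $p-k+pr_1\ge 0$, so each such term is genuinely a product of nonnegative powers.

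Next I would push $\cartier$ through using the standard identity $\cartier(B(\v x)\,C(\v x^p)) = C(\v x)\,\cartier(B(\v x))$, which gives
\[
\cartier\!\left(A(\v x)\,f(\v x)^{p-k+pr_1}\,f^\sigma(\v x^p)^{\,k-1-r}\right)
= f^\sigma(\v x)^{\,k-1-r}\,\cartier\!\left(A(\v x)\,f(\v x)^{p-k+pr_1}\right).
\]
Now I track the two properties. For membership: $A\in L$ and $f(\v x)L\subset L$ (property (b) of Definition~\ref{cartier-f-compatible}) give $A f^{p-k+pr_1}\in L$; then $\cartier(L)\subset L^\sigma$ (property (c)) puts $\cartier(Af^{p-k+pr_1})$ in $L^\sigma$; and $f^\sigma L^\sigma\subset L^\sigma$ (property (b) for $L^\sigma$, which is $(\sigma,f^\sigma)$-compatible by the Lemma following Definition~\ref{cartier-f-compatible}) finishes the argument after multiplying by $f^\sigma(\v x)^{k-1-r}$. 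For the support: $\Supp(A)\subset k\mu$, and since $\mu$ is open and $f$ is supported in $\Delta$, the polynomial $Af^{p-k+pr_1}$ is supported in $(k + p - k + p r_1)\mu = (p + p r_1)\mu = p(1+r_1)\mu$ (using openness of $\mu$ to absorb the $\Delta$-part of $\Supp(f)$, exactly as in Proposition~\ref{cartier-stable}); hence $\cartier(Af^{p-k+pr_1})$ is supported in $(1+r_1)\mu$, and multiplying by $f^\sigma(\v x)^{k-1-r}$ lands in $(1+r_1+k-1-r)\mu = (k+r_1-r)\mu$. Since $r_1\le r$ this is contained in $k\mu$ (again using openness of $\mu$ to enlarge the cone). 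Summing over the finitely many terms preserves both properties, and the claim follows.

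The main obstacle is bookkeeping rather than substance: one must check the support inclusions $(k+r_1-r)\mu\subseteq k\mu$ carefully, keeping track that $\mu$ being open in the face topology on $\Delta$ is precisely what allows one to write $j\mu + \Delta \subseteq (j+1)\mu$ and more generally to enlarge cones, and one must confirm that the binomial expansion does not produce any negative powers of $f$ — which is exactly where the hypothesis $k<p$ enters through $p-k\ge 1>0$. No genuinely new idea beyond the techniques already used in the proof of Proposition~\ref{cartier-stable} is needed.
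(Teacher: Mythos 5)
Your proof is correct and follows essentially the same route as the paper's: binomial expansion of $F^{(k)}$, pulling the $f^\sigma(\v x^p)$-powers through $\cartier$ via $\cartier(B(\v x)C(\v x^p))=C(\v x)\cartier(B(\v x))$, and then invoking properties (b) and (c) of $(\sigma,f)$-compatibility together with openness of $\mu$ for the support. (One harmless bookkeeping slip: after absorbing the factor $f^\sigma(\v x^p)^{r_2}$ the exponent of $f^\sigma(\v x^p)$ in your decomposition should be $k-1-r_1$ rather than $k-1-r$, which in fact makes the final support land exactly in $k\mu$; nothing in the argument changes.)
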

\begin{proof}
Note that $F^{(k)}$ is supported in $k(p-1)\Delta$. As $\mu$ is open, then $A F^{(k)}$ has support in $kp\mu$ and application of $\cartier$ yields a polynomial supported in $k \mu$. Expanding $F^{(k)}(\v x) = f(\v x)^{p-k}\sum_{r=0}^{k-1}\sum_{j=0}^r \binom{r}{j}(-1)^j f(\v x)^{pj} f^\sigma(\v x^p)^{k-j-1}$ on has
\[
\cartier( AF^{(k)}) = \sum_{r=0}^{k-1}\sum_{j=0}^r \binom{r}{j}(-1)^j f^\sigma(\v x)^{k-j-1} \cartier( A(\v x) f(\v x)^{pj}). \]
Eeach  $A(\v x) f^{pj}(\v x) \in L^\sigma$ by (b) in Definition~\ref{cartier-f-compatible} and $\cartier$ maps in to an element of $L^\sigma$. Recall that $L^\sigma$ is $(\sigma,f^\sigma)$-compatible, so every term remains in $L^\sigma$ after multiplication by a power of $f^\sigma$.   
\end{proof}

\begin{remark}
Note that Definition \ref{higherHasseWitt} depends on the choice of the basis vectors $b_i$.
This dependence is not made explicit in the notation since we shall be interested only in the
determinant of $HW^{(k)}(M)$ in our applications.

Strictly speaking, $HW^{(k)}(M)$ is the transpose of the matrix of $\cartier$ modulo $p^k$,
with the entries lifted to $R$. 
\end{remark}

In the case $M=\hat\Omega_f(\mu)$ we can use the basis $\frac{\v x^{\v u}}{f(\v x)^k}$
with $\v u\in k\mu$. The $k$-th Hasse-Witt matrix now has the entries
\[
HW^{(k)}(M)_{\v u,\v v}=\mbox{coefficient of $\v x^{p\v v-\v u}$ in }
F^{(k)}(\v x).
\]
In the case $k=1$ this is precisely the Hasse-Witt matrix from Parts I,II.
We then abbreviate $HW^{(k)}(\hat\Omega_f(\mu))$ by $HW^{(k)}(\mu)$. As a curiosity we mention that
\[
F^{(k)}(\v x) =  \frac{(f^\sigma(\v x^p)-f(\v x)^p)^k-(f^\sigma(\v x^p))^k}{-f(\v x)^k} \is \frac{f^\sigma(\v x^p)^k}{f(\v x)^k}\mod{p^k}.
\]
In order to determine the coefficient we have to consider a Laurent series expansion
on the right hand side.

In the case of a Calabi-Yau crystal, with $f=1-tg$, we use the basis 
$t^{\deg(\v u)}\frac{\v x^{\v u}}{f(\v x)^k}$ with $\v u\in k\Delta^\circ$. 
The associated Hasse-Witt matrix is then a twist of $HW^{(k)}(\Delta^\circ)$,
where the entries with indices $\v u,\v v$ are multiplied with 
$(t^\sigma)^{\deg(\v v)}/t^{\deg(\v u)}$.

By construction we have that
\be{HW-action}
\cartier\left(\frac{b_i}{f^k}\right)\is \sum_{j=1}^{m_k}HW^{(k)}_{i,j}(M)
\frac{b_j^\sigma}{(f^\sigma)^k}\mod{p^k M^\sigma}.
\ee

\begin{definition}[{\bf extended basis}]
Let $M$ be a Dwork crystal and $p>k$.
Denote the $R$-rank of $M(\ell)$ by $m_\ell$ for all $\ell$.
An extended basis of $M(k)$ is a basis $\omega_1,\ldots,\omega_{m_k}$
such that $\omega_1,\ldots,\omega_{m_\ell}$ is a free basis of $M(\ell)$
for $\ell=1,2,\ldots,k$. 
\end{definition}

The requirement of an extended basis for $M(k)$ is a mild one, as can be seen
from the following examples. The reader may wish to skip the overview of the
examples and proceed directly to Proposition \ref{hwc-maximal}. For a set $S \subset \R^n$ we denote $S \cap \Z^n$ by $S_\Z$.

\begin{lemma}\label{ext-basis-vertex-condition}
Suppose there is a vertex $\v b$ of $\Delta$ such that the coefficient of $\v x^{\v b}$ in
$f$ is a unit in $R$. Then, for any open set $\mu\subset\Delta$ and $p\ge k$ the set of functions
\[
(\ell-1)!\frac{\v x^{\v u}}{f(\v x)^\ell},\quad \v u\in (\ell\mu)_\Z\setminus(\v b+(\ell-1)\mu)_\Z,
\text{ when }\ell=2,\ldots,k,\quad \v u\in\mu_\Z\text{ when }\ell=1
\]
forms an extended basis of $\Omega_f(\mu)(k)$. 
\end{lemma}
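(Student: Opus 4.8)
The plan is to produce the claimed list of functions directly and verify the three things one needs: (1) each listed function lies in $\Omega_f(\mu)$; (2) the listed functions of level $\le \ell$ span $\Omega_f(\mu)(\ell)$ for each $\ell = 1,\dots,k$; (3) they are $R$-linearly independent. Property (1) is immediate from the definition of $\Omega_f(\mu)$, since $\v u \in (\ell\mu)_\Z$ forces $\v x^{\v u}$ to be supported in $\ell\mu$. The heart of the argument is property (2), and the key structural fact to exploit is the inclusion $\iota_\ell: \Omega_f(\mu)(\ell-1) \hookrightarrow \Omega_f(\mu)(\ell)$ from Remark~\ref{free-level-k-part}, sending $(\ell-2)!\,A/f^{\ell-1}$ to $\tfrac{1}{\ell-1}(\ell-1)!\,Af/f^\ell$. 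Under this map, multiplication by $f$ turns a numerator supported in $(\ell-1)\mu$ into one supported in $(\ell-1)\mu + \v b \subset \ell\mu$ (using that $\v b$ is a vertex, hence $\v b + (\ell-1)\mu$ has all points of ``lower'' type), so $\iota_\ell(\Omega_f(\mu)(\ell-1))$ is the submodule of $\Omega_f(\mu)(\ell)$ spanned by $(\ell-1)!\,\v x^{\v u}/f^\ell$ with $\v u \in (\v b + (\ell-1)\mu)_\Z$. Wait --- that last claim needs a small argument: I would show that $f \cdot \v x^{\v u'}$ for $\v u' \in ((\ell-1)\mu)_\Z$, divided by the unit leading coefficient $f_{\v b}$, equals $\v x^{\v u' + \v b}$ plus a combination of $\v x^{\v u}$ with $\v u \in (\v b + (\ell-1)\mu)_\Z$ of strictly smaller ``distance from $\v b$'', and run a triangular induction; the relevant partial order comes from writing the exponents appearing in $f$ and using that $\v b$ is a vertex of $\Delta$.

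\textbf{Carrying out step (2).} Granting the identification of $\iota_\ell$'s image, one has $\Omega_f(\mu)(\ell) = \iota_\ell(\Omega_f(\mu)(\ell-1)) + \langle (\ell-1)!\,\v x^{\v u}/f^\ell : \v u \in (\ell\mu)_\Z \rangle$, and the complement of $\iota_\ell$'s image is spanned exactly by those $\v x^{\v u}$ with $\v u \in (\ell\mu)_\Z \setminus (\v b + (\ell-1)\mu)_\Z$. Iterating from $\ell = 1$ up to $\ell = k$ then shows that $\{(\ell-1)!\,\v x^{\v u}/f^\ell : \ell \le \text{current level},\ \v u \text{ as listed}\}$ spans $\Omega_f(\mu)(\ell)$; in particular the first $m_\ell := \#(\ell\mu)_\Z$ of them (those of level $\le \ell$) span $\Omega_f(\mu)(\ell)$, which is the defining property of an extended basis. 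Here I also need $m_\ell = m_{\ell-1} + \#\big((\ell\mu)_\Z \setminus (\v b + (\ell-1)\mu)_\Z\big)$, which is the bijection $(\ell-1)\mu)_\Z \leftrightarrow (\v b + (\ell-1)\mu)_\Z$ via $\v u' \mapsto \v u' + \v b$ together with $\v b + (\ell-1)\mu \subseteq \ell\mu$ (translation of a scaled copy inside the larger scaled copy, valid because $\v b \in \Delta$).

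\textbf{Step (3): independence, hence basis.} For linear independence, I would argue that $\Omega_f(\mu) = \bigoplus_{\ell \ge 1} \big(\text{level-}\ell \text{ part modulo } \iota_\ell(\text{level-}(\ell-1))\big)$ as graded pieces --- or more concretely, that a relation $\sum_\ell (\ell-1)!\sum_{\v u} c_{\ell,\v u}\,\v x^{\v u}/f^\ell = 0$ with $c_{\ell,\v u} \in R$ can be cleared of denominators and, looking at the highest power of $f$ occurring, forces the top-level coefficients to produce a polynomial identity $\sum_{\v u} c_{k',\v u}\v x^{\v u} = f \cdot(\text{something supported in }(k'-1)\mu)$ with $\v u$ ranging over $(k'\mu)_\Z \setminus (\v b + (k'-1)\mu)_\Z$; but the right side, expanded, has a nonzero $\v x^{\v w}$-coefficient for some $\v w \in (\v b + (k'-1)\mu)_\Z$ unless it vanishes, and by the triangular structure around $\v b$ this forces all $c_{k',\v u} = 0$, then one descends. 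Since the count of listed functions of level $\le \ell$ equals $m_\ell = \operatorname{rank}\Omega_f(\mu)(\ell)$ (these modules are free, as noted in Remark~\ref{free-level-k-part}), spanning plus the correct cardinality already gives that they form a free basis, so independence can alternatively be deduced for free once (2) and the rank count are in hand. The main obstacle is the triangularity lemma around the vertex $\v b$: making precise the statement ``multiplication by $f$ is unitriangular with respect to distance from $\v b$'' and checking it genuinely uses that $\v b$ is a \emph{vertex} (so that $\v b$ is the unique support point of $f$ of its type, and adding support exponents of $f$ to a point of $(\ell-1)\mu$ moves it to a strictly interior-ward position relative to $\v b + (\ell-1)\mu$). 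Everything else is bookkeeping with the polytopes $(\ell\mu)_\Z$ and the inclusion $\iota_\ell$.
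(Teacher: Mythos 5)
Your overall strategy---induct on the level $\ell$ via the inclusion $\iota_\ell$ of Remark~\ref{free-level-k-part}, and reduce everything to a Euclidean division by $f$ that is triangular with respect to a partial order anchored at the vertex $\v b$---is exactly the paper's. Your independence step is a clean supplement the paper leaves implicit: since $\Omega_f(\mu)(\ell)\cong R^{\#(\ell\mu)_\Z}$ is free and your list has exactly $\#(\ell\mu)_\Z$ elements of level $\le\ell$ (via the bijection $\v u'\mapsto\v u'+\v b$ and $\v b+(\ell-1)\mu\subseteq\ell\mu$), a spanning set of the right cardinality in a free module is automatically a basis.

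However, the intermediate claim you reduce to is false as stated: $\iota_\ell(\Omega_f(\mu)(\ell-1))$ is \emph{not} the span of the monomial functions $(\ell-1)!\,\v x^{\v u}/f^\ell$ with $\v u\in(\v b+(\ell-1)\mu)_\Z$, and $f\cdot\v x^{\v u'}/f_{\v b}-\v x^{\v u'+\v b}$ is \emph{not} supported in $\v b+(\ell-1)\mu$. Take the paper's Section~\ref{sec:example1} example: $f=1-x_1-x_2+(1-t)x_1x_2$, $\v b=(0,0)$, $\mu=\Delta\setminus\{\text{upper}\cup\text{right edge}\}$, $\ell=2$. Then $(\v b+\mu)_\Z=\mu_\Z=\{(0,0)\}$, yet $f\cdot\v x^{(0,0)}=f$ has the exponents $(1,0),(0,1),(1,1)$, all outside $\v b+\mu$; conversely $1/f^2$ has numerator supported in $(\v b+\mu)_\Z$ but is not of the form $Af/f^2$ with ${\rm Supp}(A)\subseteq\mu$. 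So the two submodules you equate are genuinely different. The correct statement---and what the division algorithm actually delivers---is weaker: every $A$ with ${\rm Supp}(A)\subseteq\ell\mu$ decomposes as $A=Pf+Q$ with ${\rm Supp}(P)\subseteq(\ell-1)\mu$ and ${\rm Supp}(Q)\subseteq\ell\mu\setminus(\v b+(\ell-1)\mu)$. When you divide off a leading term $f_{\v b}\v x^{\v u'+\v b}$, the new exponents $\v u'+\v w$ ($\v w\in{\rm Supp}(f)$, $\v w\ne\v b$) are strictly smaller in the order coming from the pointed cone $C(\Delta-\v b)$ (this is where $\v b$ being a vertex enters, as you correctly identify), but they can land anywhere in $\ell\mu$; those falling outside $\v b+(\ell-1)\mu$ must be absorbed into the remainder $Q$ rather than divided further, and the process terminates because the order is well-founded on the finite set $(\ell\mu)_\Z$. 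With this correction you still get $\Omega_f(\mu)(\ell)=\iota_\ell(\Omega_f(\mu)(\ell-1))+\langle(\ell-1)!\,\v x^{\v u}/f^\ell:\v u\in(\ell\mu)_\Z\setminus(\v b+(\ell-1)\mu)_\Z\rangle$, and your counting argument finishes the proof.
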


\begin{proof}
In this proof we can assume, without loss of generality, that the coefficient of $\v x^{\v b}$ in $f$ is $1$.

When $k=1$ our lemma is obvious. Suppose $k>1$ and assume our lemma holds
for $\Omega_f(\mu)(k-1)$. Consider the standard basis of $\Omega_f(\mu)(k-1)$ embedded
in $\Omega_f(\mu)(k)$ via
\[
(k-2)!\frac{\v x^{\v u}}{f(\v x)^{k-1}}\mapsto (k-1)!\frac{\v x^{\v u}f(\v x)}{(k-1)f(\v x)^k}.
\]
We must show that this embedded set can be supplemented with the functions $(k-1)!\frac{\v x^{\v u}}{f^k}$ with
$\v u\in(k\mu)_\Z\setminus(\v b+(k-1)\mu)_\Z$ to a free basis of $\Omega_f(\mu)(k)$.
To that end we show that every Laurent polynomial $A$ with support in $k\mu$ can be
written in the form $A=Pf+Q$, where $P,Q$ are Laurent polynomials with support in $(k-1)\mu$,
$k\mu\setminus(\v b+(k-1)\mu)$ respectively. In our considerations we can ignore the factorials
$(k-1)!$ and $(k-2)!$ because they are units in $R$, we assumed that $p\ge k$.

Let $C(\Delta\setminus\v b)$ be the positive cone generated by the points of $\Delta$. Note that $\v 0$ is its unique
vertex. We impose a partial ordering on $k\mu$ by saying that $\v u$ is larger than $\v v$ if $\v u\ne\v v$
and $\v v\in\v u+C(\Delta\setminus\v b)$. We shall say that a finite subset $S\subset (k\mu)_\Z$ is closed with respect
to the ordering if $\v u\in S$ implies that $\v v\in S$ for every $\v v\in(k\mu)_\Z$ smaller than $\v u$. Notice
that this closedness property is preserved if we remove a maximal element from $S$. 

We now describe Euclidean division by $f$. Start with a Laurent polynomial $A$ with support
in $k\mu$. Set $P:=0,Q:=A$ and let $S\subset(k\mu)_\Z$ be such that it contains the support of $Q$
and is order closed. We perform the following loop.
\bigskip

{\bf Loop}: 
Choose a maximal element in $S$ and suppose it has the form $\v u+\v b$ with $\v u\in((k-1)\mu)_\Z$.
If no such element exists our procedure ends and $Q$ is the desired remainder of our division.
In the other case let $c\v x^{\v u+\v b}$ be the corresponding term and set
$P:=P+c\v x^{\v u},Q:=Q-cx^{\v u}f(\v x)$ and $S:=S\setminus\{\v b+\v u\}$.
\bigskip

Note that for any $\v w$ in the support of $f$ we have $\v u+\v w=\v u+\v b+\v w-\v b\in
\v u+\v b+C(\Delta\setminus\v b)$. Hence all new terms in $Q$ have support which is strictly smaller than $\v u+\v b$.
Therefore the new $S$ contains the support of the new $Q$.
Since $(k\mu)_\Z$ is finite, this loop terminates and after completion
we find that $A=Pf+Q$, where $P,Q$ have the desired form.
\end{proof}

Note that if the condition of Lemma~\ref{ext-basis-vertex-condition} is satisfied for $f$,
then it is also satisfied for $f^\sigma$.

\begin{lemma}\label{ext-basis-in-admissible-crystals} 
Let $M$ be a Calabi-Yau crystal as in Definition~\ref{CY-crystal} with $f(\v x)=1-t g(\v x)$ and $R=\Z_p\lb t \rb$.
Then the set 
\[
\frac{t^{\ell-1}\v x^\v u}{f(\v x)^\ell}, \v u\in\Delta^\circ\text{ with } \deg\v u=\ell-1
\text{ and }\ell=1,2,\ldots,k
\]
is an extended basis for $M(k)$.  
\end{lemma}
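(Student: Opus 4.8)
The plan is to move the statement to the polynomial side. Since $k<p$, the factorials $(\ell-1)!$ with $1\le\ell\le k$ are units of $R=\Z_p\lb t\rb$, so by Definition~\ref{dwork-crystal-def} the level-$\ell$ part of the crystal $M=\hat\Omega_{L,f}(\Delta^\circ)$ is the $R$-module $M(\ell)=\{(\ell-1)!A(\v x)/f(\v x)^\ell\mid A\in L,\ \supp(A)\subset\ell\Delta^\circ\}$. Put $L_\ell=\{A\in L:\supp(A)\subset\ell\Delta^\circ\}$. Reflexivity of $\Delta$ gives $\v v\in\ell\Delta^\circ\iff\deg(\v v)\le\ell-1$: indeed $\deg(\v v)=d$ means $\v v$ lies on the boundary of $d\Delta$, and $d\Delta\subset\ell\Delta^\circ$ exactly when $d<\ell$, because $\v 0$ is interior to $\Delta$. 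Hence $L_\ell$ is the free $R$-module on the admissible monomials $t^{\deg(\v v)}\v x^\v v$ with $\v v\in\Gamma$ and $\deg(\v v)\le\ell-1$; in particular $L_\ell$ has finite rank, so it is already $p$-adically complete and $A\mapsto A/f^\ell$ defines an $R$-module isomorphism $L_\ell\xrightarrow{\;\sim\;}M(\ell)$. Under these isomorphisms the inclusion $M(\ell-1)\hookrightarrow M(\ell)$, which is the map $\iota_\ell$ of Remark~\ref{free-level-k-part}, corresponds to multiplication by $f$, i.e. to $\cdot f:L_{\ell-1}\hookrightarrow L_\ell$. So the lemma reduces to producing, for $1\le j\le k$, a free basis of $L_j$ compatible with the chain $fL_{j-1}\subset L_j$ whose $\ell$-th layer consists of the images of the monomials $t^{\ell-1}\v x^\v u$ with $\v u\in\Gamma$, $\deg(\v u)=\ell-1$ (this is also why the index set in the statement is, in substance, $\{\v u\in\Gamma:\deg(\v u)=\ell-1\}$).

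The crux is the $R$-module splitting
\[
L_j \;=\; fL_{j-1}\ \oplus\ \big\langle\, t^{j-1}\v x^\v u \;:\; \v u\in\Gamma,\ \deg(\v u)=j-1 \,\big\rangle_R ,
\]
valid for every $j\ge1$ with $L_0=0$. To prove it, write the standard basis of $L_j$ as a disjoint union of $A=\{t^{\deg(\v v)}\v x^\v v:\deg(\v v)\le j-2\}$ and $B=\{t^{j-1}\v x^\v u:\deg(\v u)=j-1\}$, so $L_j=\langle A\rangle_R\oplus\langle B\rangle_R$; it suffices to show that the projection $\pi:L_j\to\langle A\rangle_R$ with kernel $\langle B\rangle_R$ restricts to an isomorphism on $fL_{j-1}$. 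Since $L_{j-1}$ is free on $\{t^{\deg(\v v)}\v x^\v v:\deg(\v v)\le j-2\}$, which is indexed by the same set as $A$, the composite $L_{j-1}\xrightarrow{\cdot f}L_j\xrightarrow{\pi}\langle A\rangle_R$ is given by a square matrix $N$ over $R$. From $f\cdot t^{\deg(\v v)}\v x^\v v=t^{\deg(\v v)}\v x^\v v-t^{\deg(\v v)+1}g(\v x)\v x^\v v$ and the bound $\deg(\v v+\v w)\le\deg(\v v)+\deg(\v w)\le\deg(\v v)+1$ for $\v w\in\supp(g)$ (boundary lattice points of the reflexive polytope have degree at most $1$), one rewrites each $t^{\deg(\v v)+1}\v x^{\v v+\v w}$ as a nonnegative power of $t$ times an admissible monomial and reads off that every diagonal entry of $N$ lies in $1+tR$ while every off-diagonal entry $N_{\v v',\v v}$ is a nonnegative power of $t$ times an integer, whose constant term can be nonzero only when $\deg(\v v')=\deg(\v v)+1$. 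Ordering the index set by degree, $N\bmod t$ is then lower triangular with $1$'s on the diagonal, so $\det N\in1+t\Z_p\lb t\rb\subset R^\times$ and $N\in\mathrm{GL}(R)$. Therefore $\pi\circ(\cdot f)$ is an isomorphism and the splitting follows.

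With the splitting in hand I would conclude by induction on $k$. For $k=1$ we have $L_1=R\cdot1$, hence $M(1)=R\cdot(1/f)$, which is the set in the statement (with $\ell=1$, $\v u=\v 0$). For the step, transporting the splitting through $L_k\cong M(k)$ yields $M(k)=M(k-1)\oplus\big\langle t^{k-1}\v x^\v u/f^k:\v u\in\Gamma,\ \deg(\v u)=k-1\big\rangle_R$, with $M(k-1)$ embedded via $\iota_k$; note that $\iota_k$ sends the function $t^{\ell-1}\v x^\v u/f^\ell$ of $M(k-1)$ (for $\ell\le k-1$) to the function given by the very same expression in $M(k)$. By the induction hypothesis $\{t^{\ell-1}\v x^\v u/f^\ell:1\le\ell\le k-1,\ \v u\in\Gamma,\ \deg(\v u)=\ell-1\}$ is a basis of $M(k-1)$; adjoining the basis vectors of the complementary summand gives a basis of $M(k)$ extending it, which is precisely the asserted extended basis. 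The one place where the hypothesis $R=\Z_p\lb t\rb$ (rather than a general $p$-adic ring) is used is the invertibility of $\det N\in1+tR$; establishing the mod-$t$ unitriangularity of $N$ is the main thing to get right, the rest being bookkeeping with the degree function of the reflexive polytope.
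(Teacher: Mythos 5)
Your proof is correct, but it is organized differently from the paper's. The paper argues directly with the rational functions: each generator $t^{\deg(\v u)}\v x^{\v u}/f^\ell$ (with $\deg\v u<\ell$) is multiplied in the numerator by $1=(f+tg)^{\ell-1-\deg(\v u)}$ and expanded binomially; in each resulting term $t^{\deg(\v u)+r}\v x^{\v u}g(\v x)^r/f^{r+1+\deg(\v u)}$ the monomials of top degree $\deg(\v u)+r$ are exactly (multiples of) the claimed basis vectors, while lower-degree monomials carry extra powers of $t$ and are absorbed into $tM(k)$; iterating this $t$-adically gives the spanning statement, and independence is left implicit. You instead transfer everything to the polynomial side, identify $M(\ell)$ with the free module $L_\ell$ on admissible monomials of degree $\le\ell-1$, and prove the exact splitting $L_j=fL_{j-1}\oplus\langle t^{j-1}\v x^{\v u}:\deg\v u=j-1\rangle$ by showing the transition matrix of $\pi\circ(\cdot f)$ is unitriangular modulo $t$ (via $\deg(\v v+\v w)\le\deg(\v v)+1$ for $\v w\in\supp(g)$), hence of unit determinant in $\Z_p\pow t$; induction over levels then yields the extended basis. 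The underlying mechanism is the same mod-$t$ leading-term phenomenon, but your route buys an explicit proof of freeness and of the compatibility of the bases across all levels $\ell\le k$ in one step, and it isolates precisely where $R=\Z_p\pow t$ enters ($1+tR\subset R^\times$); the paper's version is shorter and avoids setting up the matrix, at the cost of leaving linear independence unaddressed. Your reading of the index set (lattice points $\v u\in\Gamma$ with $\deg\v u=\ell-1$, i.e. the top-degree layer of $\ell\Delta^\circ$, correcting the typo ``$\v u\in\Delta^\circ$'' in the statement) and your use of the unit factorials $(\ell-1)!$ and of $\iota_\ell$ as the identity on rational functions are all consistent with the paper's conventions.
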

 
Note that this basis looks more natural than the one in Lemma \ref{ext-basis-vertex-condition}.
The down side is that Lemma \ref{ext-basis-in-admissible-crystals} yields a $\Z_p\pow t$-basis.
However the same set is a basis over a ring $R=\Z_p[t,1/P(t)]$ with a suitably chosen polynomial $P(t)$,
which depends of $f$ and $k$.
We shall not go into the details of this.

\begin{proof}[Proof of Lemma \ref{ext-basis-in-admissible-crystals}]
We show that any element of $M(k)$ can be written as a linear combination of
the above basis elements plus an element of $tM(k)$. By repeating this procedure we find that
any element of $M(k)$ lies in the $\Z_p\pow t$-span of the above basis vectors.

Consider a term $\frac{t^{\deg \v u}\v x^\v u}{f(\v x)^\ell}$ with $\deg(\v u)<\ell$ 
(because $\v u\in\ell\Delta^\circ$). We write it as 
\[
\frac{t^{\deg(\v u)}\v x^{\v u}(f+tg)^{\ell-1-\deg(\v u)}}{f^\ell}=\sum_{r=0}^{\ell-1-\deg(\v u)}
\binom{\ell-1-\deg(\v u)}{r}\frac{t^{\deg(\v u)+r}\v x^{\v u}g(\v x)^r}{f(\v x)^{r+1+\deg(\v u)}}.
\]
The terms in $t^{\deg(\v u)+r}\v x^{\v u}g(\v x)^r$ of degree equal to $\deg(\v u)+r$ give rise to a
linear combination of the above mentioned basis elements. The term with degree $<\deg(\v u)+r$
contribute to terms that are contained in $tM(k)$.
\end{proof}

Let us now give a practical criterion to verify the maximality of $\cartier$ on $M(k)$.

\begin{proposition}\label{hwc-maximal}
Let $p>k$ and let $M$ be a Dwork crystal such that $M(k)$ has an extended basis.
Let $m_\ell$ be the rank of $M(\ell)$ for $\ell\le k$.
Then $\det(HW^{(k)}(M))$ is divisible by $p^{L(k)}$,
where 
\[
L(k) = \sum_{\ell=1}^{k-1} (m_k-m_\ell).
\]
Moreover, if 
$\det(HW^{(k)}(M))$ divided by $p^{L(k)}$ is in $R^\times$
then $\cartier$ is maximal on $M(k)$ in the sense of
Definition \ref{maximal-cartier-image}.
\end{proposition}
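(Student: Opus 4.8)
The plan is to exploit the block structure of $HW^{(k)}(M)$ coming from an extended basis and translate invertibility of the reduced determinant into the two conditions of Definition~\ref{maximal-cartier-image}. First I would order the extended basis $\omega_1,\dots,\omega_{m_k}$ of $M(k)$ so that $\omega_1,\dots,\omega_{m_\ell}$ spans $M(\ell)$ for each $\ell\le k$, and similarly for $M^\sigma(k)$. The key observation is that for the ``old'' basis vectors coming from $M(\ell)$ with $\ell<k$, the inclusion $\iota_k$ of Remark~\ref{free-level-k-part} together with Corollary~\ref{cartier-div-by-k-on-fil-k} (applied with level $\ell$) forces extra divisibility: if $\omega_i\in M(\ell)$ then $\cartier(\omega_i)\in p^{\ell}\cdot(\text{something in }M^\sigma)$, so after writing $\cartier(\omega_i)$ in the extended basis of $M^\sigma(k)$ modulo $p^kM^\sigma$ one sees the corresponding row (or column, depending on the transpose convention) of $HW^{(k)}(M)$ is divisible by $p^{k-\ell}$ — wait, more precisely, by comparing with the level-$\ell$ Hasse--Witt map one gets that the part of $\cartier(\omega_i)$ outside $M^\sigma(\ell)$ is divisible by $p^\ell$ and hence, after the level-$k$ embedding, contributes a factor $p^{k-\ell}$. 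Summing these forced divisibilities over the blocks gives exactly $\sum_{\ell=1}^{k-1}(m_k-m_\ell)=L(k)$ as a lower bound for $\ord_p\det(HW^{(k)}(M))$, proving the first assertion.

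For the second assertion I would factor $HW^{(k)}(M)$ as $D\cdot U$ (or $U\cdot D$) where $D=\mathrm{diag}(p^{k-\ell_1},\dots,p^{k-\ell_{m_k}})$ records the forced divisibility per basis vector and $U$ has entries in $R$; then $\det(HW^{(k)}(M))/p^{L(k)}=\det(U)$ up to a unit, so the hypothesis says $U\in GL_{m_k}(R)$. Now verify Definition~\ref{maximal-cartier-image}(i): given $\omega'\in M^\sigma(k)$ I want $\omega\in M(k)$ with $\cartier(\omega)\equiv p^{k-1}\omega'\pmod{p^kM^\sigma}$. Writing $\omega=\sum c_i\omega_i$, the relation~\eqref{HW-action} turns this into a linear system over $R/pR$ for the reductions $\bar c_i$; the diagonal factor $p^{k-\ell_i}$ means that the vectors $\omega_i$ with $\ell_i=k$ (the genuinely level-$k$ ones) contribute a unit block after dividing the target by $p^{k-1}$, while the lower-level $\omega_i$ contribute nothing mod $p$ — but invertibility of $U$ guarantees the relevant reduced block is still surjective, so a solution $\bar c_i$ exists; lifting gives $\omega$. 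For condition (ii): if $\cartier(\omega)\equiv0\pmod{p^kM^\sigma}$ with $\omega=\sum c_i\omega_i$, then~\eqref{HW-action} gives $HW^{(k)}(M)\,(c_i)\equiv0\pmod{p^k}$, i.e. $D\,U\,(c_i)\equiv 0$; reading this row by row and using that $U$ is invertible over $R$, one peels off the congruences level by level (the genuinely level-$k$ coordinates are forced to be $\equiv0\pmod p$ first, then the next level, etc.) to conclude $c_i\in pR$ for all $i$, hence $\omega\in pM(k)$.

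The technical heart — and the step I expect to be the main obstacle — is establishing the precise ``forced divisibility'' statement: that for $\omega_i\in M(\ell)$ the $i$-th row of $HW^{(k)}(M)$, when expressed in an extended basis of $M^\sigma(k)$, is divisible by $p^{k-\ell}$ and, after dividing by $p^{k-\ell}$, its reduction mod $p$ is controlled by the lower Hasse--Witt matrices (in particular behaves compatibly with the filtration $M^\sigma(1)\subset\cdots\subset M^\sigma(k)$). This requires carefully unwinding~\eqref{cartier-action} with $m=\ell$ versus $m=k$, using $\iota_k$ to compare $\cartier(\omega_i)$ computed at level $\ell$ with its image at level $k$, and tracking the factorials $(k-1)!/(\ceil{m/p}-1)!$ which are units since $p>k$. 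Once this block-triangular picture is in place, both parts of Definition~\ref{maximal-cartier-image} follow from elementary linear algebra over the local-type ring $R$ (using only that $p$ is a non-zero-divisor and $R/pR$ makes sense), and the determinant count $L(k)=\sum_{\ell=1}^{k-1}(m_k-m_\ell)$ drops out by summing the diagonal exponents $k-\ell_i$ over $i$, since exactly $m_\ell-m_{\ell-1}$ basis vectors sit at genuine level $\ell$.
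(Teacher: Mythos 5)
Your load-bearing ``forced divisibility'' claim is incorrect, both in mechanism and in exponent, and the determinant count does not actually follow from it. Mechanism: $M(\ell)$ is not contained in $\fil_\ell M$ --- by Theorem \ref{free-quotient-k} they are complementary summands --- so Corollary \ref{cartier-div-by-k-on-fil-k} and the embedding $\iota_k$ say nothing about $\cartier(\omega_i)$ for $\omega_i\in M(\ell)$; the assertion ``$\omega_i\in M(\ell)\Rightarrow\cartier(\omega_i)\in p^{\ell}M^\sigma$'' is false (modulo $p$, $\cartier$ of a level-one vector is a unit combination of level-one vectors whenever the first Hasse--Witt matrix is invertible; in Section \ref{sec:example1} the row of $HW^{(2)}$ indexed by $1/f$ has the unit entry $1$). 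Exponent: the divisibility that is actually forced sits in the \emph{columns} and depends only on the level of the \emph{target} basis vector, uniformly in the row: applying \eqref{cartier-image-mod-p-k} with every $\ell\le k$ (allowed since $p>k$) gives $\cartier(M)\subset M^\sigma(\ell)+p^{\ell}M^\sigma$, hence in \eqref{HW-action} every entry $HW^{(k)}_{i,j}$ with $m_{\ell-1}<j\le m_\ell$ is divisible by $p^{\ell-1}$ --- the exponent is $\ell-1$, not $k-\ell$, and tying $p^{k-\ell}$ to the level of the source $\omega_i$ is wrong under either transpose convention. With your exponents the total valuation would be $\sum_{\ell=1}^{k}(k-\ell)(m_\ell-m_{\ell-1})=\sum_{\ell=1}^{k-1}m_\ell$, which is not $L(k)$: in Section \ref{sec:example1} ($k=2$, $m_1=1$, $m_2=4$) your count predicts $p^{1}$ while the true divisibility is $p^{3}$ (diagonal $1,\,-\binom{2p-2}{p-1},\dots$). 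So the sentence ``summing these forced divisibilities gives $L(k)$'' asserts the answer rather than deriving it, and your factorization $HW^{(k)}=D\,U$ with $D=\mathrm{diag}(p^{\,k-\ell_i})$ acting on rows does not exist. Also, the ``technical heart'' you flag (row divisibility controlled by lower Hasse--Witt matrices) is not needed at all; the correct input is immediate from \eqref{cartier-image-mod-p-k}.

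The repair is to rescale columns, not rows: let $\tilde{HW}$ be obtained from $HW^{(k)}(M)$ by dividing the $j$-th column by $p^{\ell-1}$ for $m_{\ell-1}<j\le m_\ell$; the correct count $L(k)=\sum_{\ell=1}^{k}(\ell-1)(m_\ell-m_{\ell-1})$ drops out, and the hypothesis says exactly that $\tilde{HW}$ is invertible over $R$. Then (i) of Definition \ref{maximal-cartier-image} follows by choosing, for each $j_0$, coefficients $c_i\in R$ with $\sum_i c_i\tilde{HW}_{i,j}=\delta_{j,j_0}$, so that $\cartier\bigl(\sum_i c_i\omega_i\bigr)\is p^{\ell-1}\omega_{j_0}^\sigma \bmod p^kM^\sigma$, and multiplying by $p^{k-\ell}$ to reach $p^{k-1}\omega_{j_0}^\sigma$; and (ii) follows because $\cartier\bigl(\sum_i\lambda_i\omega_i\bigr)\is0\bmod p^kM^\sigma$ gives $\sum_i\lambda_i\tilde{HW}_{i,j}\,p^{\ell-1}\is0\bmod p^k$, hence (as $\ell-1\le k-1$) $\sum_i\lambda_i\tilde{HW}_{i,j}\is0\bmod p$ for all $j$ simultaneously, and invertibility of $\tilde{HW}$ modulo $p$ forces $\lambda_i\in pR$ --- no level-by-level peeling is required. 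Your linear-algebra outline would go through once rebuilt on this column scaling, but as written it rests on a false divisibility statement.
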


\begin{proof}
Let $\omega_i, i=1,2,\ldots,m_k$ be an extended basis of $M(k)$.
Using this basis we rewrite \eqref{HW-action} as
\be{HW-action-split}
\cartier(\omega_i)\is \sum_{\ell=1}^k\sum_{j=m_{\ell-1}+1}^{m_\ell}HW^{(k)}_{i,j}(M)
\omega_j^\sigma\mod{p^k M}.
\ee
Because $\cartier(M)\subset M(r)+p^r M^\sigma$ for all $r\le k$,
we see that $HW^{(k)}_{i,j}$ is divisible by $p^{\ell-1}$ whenever $m_{\ell-1}<j$.
This implies that the $p$-adic order of $\det(HW^{(k)}(M))$ is at least
\[
\sum_{\ell=1}^{k}(\ell-1)(m_{\ell}-m_{\ell-1})=\sum_{\ell=1}^k (m_k-m_\ell),
\]
which equals $L(k)$. This proves the first statement.

Let $\tilde{HW}$ be the matrix which is obtained from $HW^{(k)}$ after division
of the $j$-th column by $p^{\ell-1}$ for all $j$ with $m_{\ell-1}<j\le m_{\ell}$ and $\ell=1,\ldots,k$. 
If $\det(HW^{(k)}(M))\in p^{L(k)}R^\times$ we
see that $\tilde{HW}$ is invertible. Equation \eqref{HW-action} becomes
\[
\cartier(\omega_i)\is\sum_{\ell=1}^{k-1}\sum_{j=m_{\ell-1}+1}^{m_\ell}\tilde{HW}_{ij}
p^{\ell-1}\omega_j^\sigma\mod{p^k M}.
\]
Since $\tilde{HW}$ is invertible, we see that modulo $p^k M$ all terms
$p^{\ell-1}\omega_j^\sigma$ with $m_{l-1}<j\le m_\ell$ are in the image of $\cartier$. Hence all
terms $p^{k-1}\omega_j^\sigma$ with $j=1,\ldots,m_k$ are in the image of $\cartier$ modulo $p^k$, and so
property (i) of Definition \ref{maximal-cartier-image} is satisfied.

Suppose on the other hand that we have $\lambda_i\in R$ such that
$\cartier(\sum_i\lambda_i\omega_i)\is0\mod{p^k M^\sigma}$. Hence
$\sum_i\lambda_i\tilde{HW}_{ij}p^{\ell-1}$ is divisible by $p^k$
for every $j,\ell$ with $m_{\ell-1}<j\le m_\ell$. In particular the sums $\sum_i\lambda_i\tilde{HW}_{ij}$
are divisible by $p$ for all $j$. By the invertibility of $\tilde{HW}$ we find that
all $\lambda_i$ must be divisible by $p$. Hence property (ii) of Definition
\ref{maximal-cartier-image} holds.
\end{proof}

\begin{definition}
The determinant of the Hasse-Witt matrix $HW^{(k)}(M)$ divided by $p^{L(k)}$ is called
the $k$-th Hasse-Witt determinant and is denoted by 
\[
hw^{(k)}(M):=p^{-L(k)}\det(HW^{(k)}(M)).
\]
\end{definition}

We note that $hw^{(k)}(M)$ depends on the choice of basis, though this is not made explicit in our notation. Using a different basis in $M(k)$ results in $hw^{(k)}(M)$ being multiplied by $u/u^\sigma \in R^\times$, where $u \in R^\times$ is the determinant of the change of basis matrix.
Notice also that if $hw^{(k)}(M)\in R^\times$, then the same
holds for $hw^{(k)}(M^{\sigma^i})$ for all $i\ge0$.
Then, as a consequence of Proposition
\ref{hwc-maximal}, $\cartier$ is maximal on all $M^{\sigma^i}(k)$.
We thus obtain the following version of Theorem \ref{free-quotient-k}.

\begin{corollary}\label{main-theorem-alt}
If $p>k$,
$M(k)$ has an extended basis and $hw^{(\ell)}(M)\in R^\times$ for $\ell=1,2,\ldots,k$,
then $M\cong M(k)\oplus \fil_k M$.
\end{corollary}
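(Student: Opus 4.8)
The plan is to derive this as an immediate consequence of Theorem \ref{free-quotient-k} together with Proposition \ref{hwc-maximal}. Theorem \ref{free-quotient-k} already delivers the desired decomposition $M \cong M(k) \oplus \fil_k M$ under the hypothesis that $\cartier$ is maximal on $M^{\sigma^i}(\ell)$ for all $i \ge 0$ and all $\ell = 1, \ldots, k$. So the entire content of the corollary is to translate the clean, checkable hypothesis ``$hw^{(\ell)}(M) \in R^\times$ for $\ell = 1, \ldots, k$'' into that maximality statement.

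First I would observe that for each fixed $\ell \le k$, Corollary \ref{main-theorem-alt}'s hypothesis $p > k \ge \ell$ is satisfied, and $M(\ell)$ has an extended basis: indeed $M(k)$ has an extended basis $\omega_1, \ldots, \omega_{m_k}$ by assumption, and by definition of extended basis the initial segment $\omega_1, \ldots, \omega_{m_\ell}$ is a free basis of $M(\ell)$ which is itself extended. Hence Proposition \ref{hwc-maximal} applies to $M(\ell)$ for every $\ell \le k$: if $hw^{(\ell)}(M) = p^{-L(\ell)}\det(HW^{(\ell)}(M)) \in R^\times$, then $\cartier$ is maximal on $M(\ell)$ in the sense of Definition \ref{maximal-cartier-image}.

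Next I would promote this from $M$ to all the Frobenius twists $M^{\sigma^i}$. The point, already noted in the text just before the corollary, is that $hw^{(\ell)}(M^{\sigma^i}) = \sigma^i\!\left(hw^{(\ell)}(M)\right)$ up to the basis-dependent unit factor, since $M^{\sigma^i}(\ell)$ inherits the extended basis $\omega_1^{\sigma^i}, \ldots, \omega_{m_\ell}^{\sigma^i}$ and the Hasse-Witt matrix $HW^{(\ell)}(M^{\sigma^i})$ is obtained by applying $\sigma^i$ to the entries of $HW^{(\ell)}(M)$ (this is visible from Definition \ref{higherHasseWitt}, since $F^{(\ell)}$ for $f^{\sigma^i}$ is the $\sigma^i$-twist of $F^{(\ell)}$ for $f$, and $\cartier$ commutes with $\sigma$). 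Since $\sigma$ is a ring endomorphism it sends units to units, so $hw^{(\ell)}(M) \in R^\times$ forces $hw^{(\ell)}(M^{\sigma^i}) \in R^\times$ for all $i \ge 0$. Applying Proposition \ref{hwc-maximal} once more, now to each $M^{\sigma^i}(\ell)$ (which again has an extended basis), we conclude that $\cartier$ is maximal on $M^{\sigma^i}(\ell)$ for all $i \ge 0$ and all $\ell = 1, \ldots, k$.

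With the hypotheses of Theorem \ref{free-quotient-k} now verified, that theorem yields $M \cong M(k) \oplus \fil_k M$, which is the claim. The only slightly delicate point — and thus the step I would write out most carefully — is the compatibility of the Hasse-Witt determinant with Frobenius twisting, i.e.\ that invertibility of $hw^{(\ell)}(M)$ really does pass to all the $M^{\sigma^i}$; everything else is a direct invocation of the two results already established. In fact this is essentially routine because $\sigma$ is an endomorphism of $R$ and the construction of $HW^{(\ell)}$ is $\sigma$-equivariant, so no genuine obstacle arises.
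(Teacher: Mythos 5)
Your proposal is correct and follows the paper's own route: the text immediately preceding the corollary notes that $hw^{(\ell)}(M)\in R^\times$ implies $hw^{(\ell)}(M^{\sigma^i})\in R^\times$ for all $i\ge0$, invokes Proposition~\ref{hwc-maximal} to get maximality of $\cartier$ on all $M^{\sigma^i}(\ell)$, and then applies Theorem~\ref{free-quotient-k}, exactly as you do. Your added detail on the $\sigma$-equivariance of the Hasse--Witt construction is a correct elaboration of what the paper asserts without proof.
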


If the assumptions of this corollary is satisfied we will say that \emph{the $k$th Hasse-Witt condition holds for $M$}.
In practice
we compute the determinants $hw^{(\ell)}(M)$ 
for $\ell=1,\ldots,k$ and verify that 
they are not divisible by $p$. If that is
the case we extend $R$ to the $p$-adic completion
of $R[1/(hw^{(1)}\cdots hw^{(k)})]$.

\begin{remark}
In Adolphson-Sperber \cite[(2.3)]{AS17} the authors also define level $k$ Hasse-Witt matrices,
but in the particular case that the interior of $(k-1)\Delta$ contains no lattice
points. In that case the computation in \cite{AS17} 
seems to come down to the determination of the coefficient of $\v x^{p\v v-\v u}$ 
of $f^{k(p-1)}$, where $\v u,\v v\in(k\Delta^\circ)_\Z$.
\end{remark}

Let us suppose that the $k$th Hasse-Witt condition holds for $M$.
Similarly to Part I, we can define matrices of the Cartier operator
and connection on the free quotients 
\[
Q^{(k)}(M):=M/\fil_k M \cong R^h,
\]
where $h=m_k$ is the rank of $M(k)$. Derivations $\delta: R \to R$ extend to operations $\delta: \Omega_f \to \Omega_f$ which act on the coefficients of rational functions through the usual rules of differential calculus. We have $\delta(r \omega)=\delta(r)\omega+r\delta(\omega)$ for any $r \in R$ and $\omega\in \Omega_f$. 
Suppose that $\omega_1,\ldots,\omega_h$ is a basis of $Q^{(k)}(M)$.
We define 
the respective matrices $\Lambda = (\lambda_{ij}) \in R^{h \times h}$ and $ 
N_\delta = (\nu_{ij}) \in R^{h \times h}$ for derivations such that $\delta(M) \subset M$ by
\be{k-matrices-def}\bal
\cartier(\omega_i) &= \sum_j \lambda_{ij} \omega_j^\sigma 
\mod {p^k \fil_k M^\sigma}, \\
\delta(\omega_i) &= \sum_j \nu_{ij} \omega_j \mod {\fil_k M}.
\eal\ee 

An important application of Theorem \ref{free-quotient-k} is the existence
of so-called {\it supercongruences for the expansion coefficients}
of a function $\omega\in\Omega_f$. 
Choose a vertex $\v b \in \Delta$ and suppose that the coefficient $f_{\v b}$ of $\v x^{\v b}$ in
$f$ is not divisible by $p$. Take the Laurent series expansion of $\omega$ with support in
the positive cone $C(\Delta-\v b)$. For any $\v{m} \in C(\Delta - \v b)_\Z$ we denote 
\be{exp-coeffs-def}\alpha_{\v m}(\omega) :=
\text{ coefficient of $\v x^{\v m}$ in the expansion of } \omega \text{ at } \v b.
\ee
Notice that $\alpha_{\v m}(\omega) \in R[f_\v b^{-1}]$. See Section~\ref{higher-derivatives} and~\cite[\S 2]{BeVl20I} 
for more details regarding formal expansions. 
The following Proposition generalizes \cite[Thm 6.2]{Ka85} and the analogous
\cite[Thm 5.7]{BeVl20I}.

\begin{proposition}[{\bf supercongruences}]\label{expansion-coeffs-k} 
Fix $\v{m} \in C(\Delta - \v b)_\Z$.
Take notations and assumptions as above and let $s$ be a positive integer.
Then the vectors $\v a_{p^s \v m} \in R^h$ with components
\[
(\v a_{p^s \v m})_i = \alpha_{p^s \v m}(\omega_i) = 
\text{ coefficient of $\v x^{p^s \v m}$ in } \omega_i
\]
satisfy the congruences
\[\bal
\v a_{p^{s} \v m} & \equiv  \Lambda \; 
\v \sigma(\v a_{p^{s-1} \v m}) \mod {p^{sk}}, \\
\delta(\v a_{p^s \v m}) &\equiv  N_\delta \; \v a_{p^{s} \v m} \mod {p^{sk}}. \\
\eal\]
\end{proposition}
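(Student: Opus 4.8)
The plan is to work entirely inside the free quotient $Q^{(k)}(M)=M/\fil_k M$ and to read off coefficients of the expansion at $\v b$. The starting point is Corollary~\ref{decompose-cartier-image}, which gives $\cartier(M)\subset M^\sigma(k)+p^k\fil_k M^\sigma$; combined with the decomposition $M\cong M(k)\oplus\fil_k M$ from Theorem~\ref{free-quotient-k}, this means that for each basis element $\omega_i$ we have an honest identity in $M$
\[
\cartier(\omega_i)=\sum_j \lambda_{ij}\,\omega_j^\sigma + p^k\,\eta_i,\qquad \eta_i\in\fil_k M^\sigma,
\]
where the $\lambda_{ij}$ are exactly the entries of the matrix $\Lambda$ from \eqref{k-matrices-def}. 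The key input about the error term is Corollary~\ref{formal-2-fil-k} (equivalently Lemma~\ref{katzlemma}): since $\eta_i\in\fil_k M^\sigma\subset d^k\Omega_{\rm formal}$, every expansion coefficient $\alpha_{\v u}(\eta_i)$ is divisible by $\gcd(u_1,\ldots,u_n)^k$.

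First I would iterate the above identity $s$ times. Applying $\cartier$ repeatedly and using that $\cartier$ is $R$-linear "up to $\sigma$" on expansions (i.e. $\cartier(\sum a_\v u \v x^\v u)=\sum a_{p\v u}\v x^\v u$, together with $\cartier(r^\sigma\cdot)$ versus $\sigma$), one gets
\[
\cartier^s(\omega_i)=\sum (\Lambda\,\sigma(\Lambda)\cdots\sigma^{s-1}(\Lambda))_{ij}\,\omega_j^{\sigma^s} + (\text{terms }p^{k}\cdot\cartier^{s-1}(\fil_k M^{\sigma}),\ldots,p^{ks}\cdot\fil_k M^{\sigma^s}).
\]
Here one uses Corollary~\ref{cartier-div-by-k-on-fil-k}: $\cartier$ sends $\fil_k M^\sigma$ into $p^k\fil_k M^{\sigma^2}$, so each of the $s$ error terms, after the remaining applications of $\cartier$, lands in $p^{ks}\fil_k M^{\sigma^s}$. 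Thus modulo $p^{ks}\fil_k M^{\sigma^s}$ we have $\cartier^s(\omega_i)\equiv \sum(\text{product of }\sigma\text{-twists of }\Lambda)_{ij}\,\omega_j^{\sigma^s}$. Now take the coefficient of $\v x^{\v m}$ on both sides. On the left, $\alpha_{\v m}(\cartier^s\omega_i)=\alpha_{p^s\v m}(\omega_i)=(\v a_{p^s\v m})_i$ by the defining property of $\cartier$ on expansions. On the right, $\alpha_{\v m}(\omega_j^{\sigma^s})=\sigma^s(\alpha_{\v m}(\omega_j))$, and these are built out of $\v a_{p^{s-1}\v m}$ after one step; the cleanest route is actually to set up the single-step congruence $\v a_{p^s\v m}\equiv \Lambda\,\sigma(\v a_{p^{s-1}\v m})\bmod p^{sk}$ directly and then note it is the statement to be proved, or alternatively to prove the iterated form and read off. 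Either way, the error term contributes $\alpha_{\v m}(p^{ks}\eta)$ with $\eta\in\fil_k M^{\sigma^s}$, hence is divisible by $p^{ks}$, and since we only need it mod $p^{sk}$ it disappears.

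For the single-step version it is cleaner: from $\cartier(\omega_i)\equiv\sum_j\lambda_{ij}\omega_j^\sigma \bmod p^k\fil_k M^\sigma$, replace $\v m$ by $p^{s-1}\v m$ in the coefficient extraction. Taking $\alpha_{p^{s-1}\v m}$ of both sides: the left gives $\alpha_{p^s\v m}(\omega_i)=(\v a_{p^s\v m})_i$; the main term gives $\sum_j\lambda_{ij}\,\sigma(\alpha_{p^{s-1}\v m}(\omega_j))=\bigl(\Lambda\,\sigma(\v a_{p^{s-1}\v m})\bigr)_i$; and the error term gives $p^k\cdot\alpha_{p^{s-1}\v m}(\eta_i)$ with $\eta_i\in\fil_k M^\sigma$, so by Lemma~\ref{katzlemma}(ii) this is divisible by $p^k\cdot\gcd(p^{s-1}m_1,\ldots,p^{s-1}m_n)^k$, which is divisible by $p^k\cdot p^{(s-1)k}=p^{sk}$. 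This proves the first congruence. The second congruence is entirely parallel and easier: from $\delta(\omega_i)\equiv\sum_j\nu_{ij}\omega_j\bmod \fil_k M$ we get $\delta(\omega_i)=\sum_j\nu_{ij}\omega_j+\zeta_i$ with $\zeta_i\in\fil_k M$; apply $\delta$ and coefficient extraction commute (since $\delta$ acts on coefficients by differential-calculus rules and the expansion at $\v b$ is $\delta$-equivariant), so $\delta(\alpha_{p^s\v m}(\omega_i))=\sum_j\nu_{ij}\alpha_{p^s\v m}(\omega_j)+\alpha_{p^s\v m}(\zeta_i)$, and the last term is divisible by $\gcd(p^sm_1,\ldots,p^sm_n)^k$, hence by $p^{sk}$.

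The main obstacle, or rather the point needing care, is the bookkeeping in the iteration: tracking how the $\sigma$-twists of $\Lambda$ multiply and, crucially, verifying that each of the $s$ successive error terms really gains enough powers of $p$ — the first error term $p^k\eta_i$ is hit by $s-1$ further applications of $\cartier$, and one must be sure via Corollary~\ref{cartier-div-by-k-on-fil-k} that $\cartier^{s-1}$ multiplies it by $p^{k(s-1)}$ (landing in $p^{ks}\fil_k M^{\sigma^s}$), rather than relying on a cruder bound. Once the single-step identity is combined with the $\gcd$-divisibility from Lemma~\ref{katzlemma} applied at $p^{s-1}\v m$, the gain of $p^{(s-1)k}$ from the $\gcd$ together with the ambient $p^k$ is exactly what is needed, and no iteration of the error analysis is even required. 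The only genuinely delicate compatibility to cite correctly is that coefficient extraction at $\v b$ intertwines $\cartier$ with the index map $\v m\mapsto p\v m$ and intertwines $\sigma$ on coefficients with $(-)^\sigma$ on $\Omega_f$ — both established in \S2 of Part~I and recalled in Section~\ref{higher-derivatives}.
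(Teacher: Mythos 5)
Your proposal is correct and follows essentially the same route as the paper: take the defining congruences \eqref{k-matrices-def}, extract the Laurent coefficient at $\v x^{p^{s-1}\v m}$ (resp.\ $\v x^{p^s\v m}$), use $\alpha_{p^{s-1}\v m}(\cartier(\omega_i))=\alpha_{p^s\v m}(\omega_i)$ and $\alpha_{p^{s-1}\v m}(\omega_j^\sigma)=\alpha_{p^{s-1}\v m}(\omega_j)^\sigma$, and kill the error term via Lemma~\ref{katzlemma}, since the $p^k$ in front together with the $\gcd^k$-divisibility of coefficients of elements of $\fil_k$ at index $p^{s-1}\v m$ gives $p^{sk}$. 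The iterated-Cartier detour you sketch first is unnecessary, as you yourself note; the single-step argument is exactly the paper's proof.
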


\begin{proof} (Similar to \cite[Thm 5.7]{BeVl20I} in Part I.)
Expand \eqref{k-matrices-def} in a Laurent series and take on both sides
the coefficient of $\v x^{p^{s-1}\v m}$. 
Use the fact that $\alpha_{p^{s-1} \v m}(\cartier(\omega_i)) =
\alpha_{p^{s} \v m}(\omega_i)$ and $\alpha_{p^{s-1}\v m}(\omega_j^\sigma)
=\alpha_{p^{s-1}\v m}(\omega_j)^\sigma$. 
For the second congruence take the coefficient on both sides of $\v x^{p^s\v m}$.
The claimed congruences follow because, 
by Lemma~\ref{katzlemma}, the coefficient of $\v x^{p^s\v m}$
of elements of $\fil_k$ are divisible by $p^{sk}$.  
\end{proof}

It is also possible to give a system of differential equations for $\Lambda$
as follows. 

\begin{proposition}\label{Cartier-connection-relation}
Let $\Lambda$ and $N_\delta$ be the matrices defined in \eqref{k-matrices-def}. Then
\[
N_\delta \Lambda = \Lambda N_\delta^{\sigma} + \delta(\Lambda),
\]
where $N_\delta^{\sigma}$ is the (transpose) matrix of 
$\delta$ on $Q^{(k)}(M^\sigma)$ in the basis $\omega_i^\sigma$.
\end{proposition}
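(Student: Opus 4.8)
The plan is to differentiate the defining relation for $\Lambda$ and compare with the defining relation for $N_\delta$, working consistently modulo $\fil_k$. First I would start from the congruence
\[
\cartier(\omega_i) \equiv \sum_j \lambda_{ij}\,\omega_j^\sigma \mod{p^k \fil_k M^\sigma}
\]
and apply the derivation $\delta$ to both sides. On the left, since $\cartier$ commutes with $\delta$ up to the factor coming from $\delta$ acting on coordinates — more precisely, on Laurent expansions $\delta$ (being a derivation of $R$, not touching the $\v x$) commutes with the coefficient-extraction map $\sum a_\v u \v x^\v u \mapsto \sum a_{p\v u}\v x^\v u$ — we get $\delta(\cartier(\omega_i)) = \cartier(\delta(\omega_i))$. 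Then I would substitute $\delta(\omega_i) \equiv \sum_\ell \nu_{i\ell}\,\omega_\ell \mod{\fil_k M}$ and apply $\cartier$; using Corollary~\ref{cartier-div-by-k-on-fil-k}, the $\fil_k M$ error term maps into $p^k \fil_k M^\sigma$, so the congruence class is controlled, and I get $\cartier(\delta(\omega_i)) \equiv \sum_\ell \nu_{i\ell}\,\cartier(\omega_\ell) \equiv \sum_{\ell,j}\nu_{i\ell}\lambda_{\ell j}\,\omega_j^\sigma \mod{p^k \fil_k M^\sigma}$, i.e. the $(i,j)$ entry is $(N_\delta \Lambda)_{ij}$.

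On the right-hand side, $\delta\bigl(\sum_j \lambda_{ij}\omega_j^\sigma\bigr) = \sum_j \delta(\lambda_{ij})\,\omega_j^\sigma + \sum_j \lambda_{ij}\,\delta(\omega_j^\sigma)$. The first sum contributes $\delta(\Lambda)$. For the second, I would use the relation $\delta(\omega_j^\sigma) \equiv \sum_\ell (N_\delta^\sigma)_{j\ell}\,\omega_\ell^\sigma \mod{\fil_k M^\sigma}$, where $N_\delta^\sigma$ is the connection matrix on $Q^{(k)}(M^\sigma)$ in the basis $\omega_i^\sigma$ — here one should note that $\delta$ need not commute with $\sigma$, so $N_\delta^\sigma$ is genuinely the connection matrix of the crystal $M^\sigma$ and not literally $\sigma$ applied to $N_\delta$ (the notation is as declared in the statement). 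Multiplying through by $\lambda_{ij}$ and summing gives $\sum_j \lambda_{ij}\delta(\omega_j^\sigma) \equiv \sum_{j,\ell}\lambda_{ij}(N_\delta^\sigma)_{j\ell}\,\omega_\ell^\sigma = (\Lambda N_\delta^\sigma)_{i\ell}\,\omega_\ell^\sigma$ modulo $\fil_k M^\sigma$. Equating the two sides and using that $\omega_1^\sigma,\ldots,\omega_h^\sigma$ form a basis of the free module $Q^{(k)}(M^\sigma)$ yields $N_\delta \Lambda = \Lambda N_\delta^\sigma + \delta(\Lambda)$.

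The main point to be careful about — the one genuine obstacle — is bookkeeping of the error modules: the relation for $\Lambda$ holds modulo $p^k \fil_k M^\sigma$, while the relations for $N_\delta$ hold only modulo $\fil_k M$ (resp. $\fil_k M^\sigma$), with no extra $p$-power. One must check that after applying $\delta$ and $\cartier$ everything can be reconciled modulo the same module so that comparing coefficients is legitimate. The key inputs are that $\delta$ preserves $\fil_k M$ (which follows from the Leibniz rule together with the characterization in Corollary~\ref{formal-2-fil-k}, since differentiating a series whose $\v u$-coefficient is divisible by $\gcd(\v u)^k$ keeps that divisibility), and that $\cartier$ sends $\fil_k M$ into $p^k \fil_k M^\sigma$ by Corollary~\ref{cartier-div-by-k-on-fil-k}; these together guarantee that passing $\delta$ through $\cartier$ in the left-hand computation only introduces errors in $p^k \fil_k M^\sigma$, which is exactly the modulus in which the $\Lambda$-relation lives, so the final identity is an honest equality of matrices over $R$.
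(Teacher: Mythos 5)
Your proposal is correct and follows essentially the same route as the paper: apply $\delta$ to the defining congruence $\cartier(\omega_i)\equiv\sum_j\lambda_{ij}\omega_j^\sigma$, use $\delta\circ\cartier=\cartier\circ\delta$ together with the Leibniz rule and the defining relations for $N_\delta$ and $N_\delta^\sigma$, and compare coefficients using that the $\omega_j^\sigma$ are a basis of the free quotient $Q^{(k)}(M^\sigma)$. Your explicit bookkeeping of the error modules (errors land in $\fil_k M^\sigma$, which suffices since the comparison takes place in $Q^{(k)}(M^\sigma)$) only makes explicit what the paper's more formal matrix computation leaves implicit.
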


\begin{proof}
Denote the column vector with entries $\omega_1,\ldots,\omega_h$
by $\boldomega$. Then \eqref{k-matrices-def} can be summarized as $\cartier(\boldomega)
=\Lambda\boldomega^\sigma$ and $\delta(\boldomega)=N_\delta\boldomega$.
Apply $\delta$ to $\cartier(\boldomega)=\Lambda\boldomega^\sigma$. We get
\[
\delta(\cartier(\boldomega))=\delta(\Lambda\boldomega^\sigma).
\]
Since $\delta\circ\cartier=\cartier\circ\delta$ the left hand side can be
rewritten as $\cartier(\delta(\boldomega))=\cartier(N_\delta\boldomega)=
N_\delta\Lambda\boldomega^\sigma$.
The right hand side can be rewritten as 
\[
\delta(\Lambda)\boldomega^\sigma+\Lambda\delta(\boldomega^\sigma)=
\delta(\Lambda)\boldomega^\sigma+\Lambda N_\delta^\sigma\boldomega^\sigma.
\]
Thus we find that 
\[
N_\delta\Lambda\boldomega^\sigma=\delta(\Lambda)\boldomega^\sigma+
\Lambda N_\delta^\sigma\boldomega^\sigma. 
\]
Since $\omega_1^\sigma,\ldots,\omega_h^\sigma$ are independent over $R$ we can
drop $\boldomega^\sigma$ from this equality and find the desired result.
\end{proof}

\begin{remark}
The same proposition, with the same proof, can also be shown
for the matrices $\Lambda_\sigma$ and $N_\delta$ in Theorem 5.3 of \cite{BeVl20I}.
This proposition forms the basis for the so-called Frobenius structure of the
system of linear differential equation associated to the matrices $N_\delta$
for all derivations $\delta$ of $R$. Examples will be worked out in the next
sections.
\end{remark}

\section{A simple example}\label{sec:example1}

Consider the Dwork crystal $\hat\Omega_f$ with 
$f(\v x) = (1-x_1)(1-x_2)-tx_1 x_2=1-x_1-x_2+(1-t)x_1x_2$. For the base ring $R$ we take
the $p$-adic completion of $\Z_p[t,1/t]$. We will see later that
the Hasse-Witt conditions require $t$ to be invertible in $R$. 
As Frobenius lift we shall take $\sigma:t\to t^p$. Later we briefly
deal with other Frobenius lifts. 

The goal of this section is to prove the following.
\begin{theorem}\label{theorem-example1}
For any prime $p\ge3$ and the Frobenius lift given by $t^\sigma = t^p$ the we have
\[
\cartier\left(\frac{1}{f}\right)\is\frac{1}{f^\sigma}\mod{p^2\fil_2^\sigma}.
\]
\end{theorem}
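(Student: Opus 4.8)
The plan is to apply the machinery of Sections 3--5 to this very small crystal. The first task is to understand the relevant level parts and their ranks. Here $\Delta$ is the unit square with vertices $(0,0),(1,0),(0,1),(1,1)$, so $\Delta^\circ$ contains no lattice points and $2\Delta^\circ$ contains exactly the single point $(1,1)$. Working with $\mu = \Delta$ rather than $\Delta^\circ$, the level-$1$ part $M(1)$ is free of rank $4$ (spanned by $\v x^\v u/f$, $\v u \in \Delta_\Z$) and $M(2)$ has rank $9$. By Lemma~\ref{ext-basis-vertex-condition} applied at the vertex $\v b=(0,0)$ (where $f_\v b = 1$), $M(2)$ has an extended basis, so Corollary~\ref{main-theorem-alt} applies once we check the first and second Hasse--Witt conditions. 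First I would verify $hw^{(1)}(\hat\Omega_f) \in R^\times$ and $hw^{(2)}(\hat\Omega_f) \in R^\times$ by direct computation from Definition~\ref{higherHasseWitt}: since $f$ is quadratic, $f^{p-1}$ and $F^{(2)} = f^{p-2}\bigl(f^\sigma(\v x^p) + (f^\sigma(\v x^p) - f(\v x)^p)\bigr)$ are explicit polynomials whose relevant coefficients one computes via the multinomial theorem, and one shows the resulting determinants are (up to units like $u/u^\sigma$) powers of $t$ times a unit. This fixes the base ring as the $p$-adic completion of $\Z_p[t,1/t]$ and gives the decomposition $M \cong M(2) \oplus \fil_2 M$.

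Next I would identify the image of $1/f$ in the free quotient $Q^{(2)}(M) = M/\fil_2 M$, which has rank $m_2 = 9$. The key computation is to apply formula~\eqref{cartier-action} with $m=1$, $\ceil{m/p}=1$: this gives
\[
\cartier\!\left(\frac{1}{f}\right) = \sum_{r \ge 0} \frac{p^r}{r!}\, r!\,\frac{Q_r(\v x)}{f^\sigma(\v x)^{r+1}}, \qquad Q_r = \cartier\bigl(f(\v x)^{p-1} G(\v x)^r\bigr),
\]
where $pG = f^\sigma(\v x^p) - f(\v x)^p$. By the order estimate at the start of Section~\ref{sec:main-thm}, all terms with $r \ge 2$ vanish modulo $p^2 M^\sigma$, so
\[
\cartier\!\left(\frac{1}{f}\right) \is \frac{\cartier(f^{p-1})}{f^\sigma} + p\,\frac{\cartier(f^{p-1}G)}{(f^\sigma)^2} \mod{p^2 M^\sigma}.
\]
The term $\cartier(f^{p-1})$ is, by definition, $\sum_j HW^{(1)}_{1,j}\,\v x^{\v u_j}$ up to elements with coefficients divisible by $p$, and $f^{p-1} \is (f^\sigma(\v x^p))^{?}$-type identities let one pin it down. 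The heart of the matter is to show that, after passing to $Q^{(2)}(M)$, this entire expression equals exactly $1/f^\sigma$, i.e. that $\Lambda$ applied to the basis vector representing $1/f$ returns the corresponding basis vector for $1/f^\sigma$ and nothing else. Here the twist $\sigma: t \mapsto t^p$ is essential: it makes $f^\sigma(\v x) = f(\v x)|_{t \to t^p}$, and the congruence $f^{p-1} \is f^\sigma(\v x^p)/\bigl(\text{correction}\bigr)$ modulo appropriate powers of $p$ becomes tractable because $t^\sigma = t^p$ interacts well with $\v x \mapsto \v x^p$ under $\cartier$.

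The main obstacle I anticipate is the mod-$p^2$ control of the two surviving terms: showing that $\cartier(f^{p-1})/f^\sigma + p\,\cartier(f^{p-1}G)/(f^\sigma)^2$ collapses to $1/f^\sigma$ in $Q^{(2)}(M)$ requires not just a mod-$p$ statement (which would follow from $hw^{(1)}=1$) but genuine mod-$p^2$ cancellation between the $r=0$ correction terms and the $r=1$ term. Concretely one must expand $f(\v x)^{p-1} = \bigl((1-x_1)(1-x_2) - tx_1x_2\bigr)^{p-1}$, apply $\cartier$ (extract $p\v v$-indexed coefficients), and compare with the analogous expansion of $f^\sigma(\v x^p)^{?}$, keeping track of binomial coefficients $\binom{p-1}{j} \is (-1)^j \mod p$ and the $p$-adic valuations of $\binom{p-1}{j} - (-1)^j$. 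A clean way to organize this is to use the identity $F^{(2)} = \dfrac{(f^\sigma(\v x^p) - f(\v x)^p)^2 - f^\sigma(\v x^p)^2}{-f(\v x)^2} \is \dfrac{f^\sigma(\v x^p)^2}{f(\v x)^2} \mod{p^2}$ recorded just after Definition~\ref{higherHasseWitt}, which already packages the $r=0$ and $r=1$ contributions together; then $\cartier\bigl(\tfrac{1}{f}\bigr) \is \tfrac{1}{(f^\sigma)^2}\cartier(f^{-1} F^{(2)})$-type manipulation reduces everything to showing $\cartier\bigl(f(\v x)\,f^\sigma(\v x^p)^2 / f(\v x)^2\bigr)$, expanded as a Laurent series, has the right image. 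Granting that computation, Theorem~\ref{theorem-example1} follows. I expect the examples section to also record the $4\times 4$ matrix $\Lambda$ for general $\sigma$, against which this $\sigma: t \mapsto t^p$ case is the "excellent" specialization making the answer trivial.
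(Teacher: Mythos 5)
Your framework (extended basis, Hasse--Witt conditions, Corollary~\ref{main-theorem-alt}, and the truncation of \eqref{cartier-action} modulo $p^2$) is the right one, and it is essentially the paper's starting point; but two things go wrong. First, a computational slip in the setup: with your choice $\mu=\Delta$ the first Hasse--Witt matrix is triangular with diagonal entries $1,1,1,(1-t)^{p-1}$, so $hw^{(1)}$ is a power of $1-t$, not ``a power of $t$ times a unit''; your setup therefore does not produce the base ring $\widehat{\Z_p[t,1/t]}$ but forces inverting $1-t$ (and whatever $hw^{(2)}$ of the rank-$9$ level-two part requires), after which one would still have to descend the congruence back to the smaller ring --- none of this is addressed. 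The paper avoids all of it by working in the subcrystal $\hat\Omega_f(\mu)$ with $\mu=\Delta\setminus\{\text{upper edge}\cup\text{right edge}\}$, so that $\mu_\Z=\{(0,0)\}$, $(2\mu)_\Z=\Delta_\Z$, the level parts have ranks $1$ and $4$, and $hw^{(2)}\equiv -t^{p-1}\bmod p$, which is what singles out $\widehat{\Z_p[t,1/t]}$.

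The second and decisive gap is that the heart of the theorem --- the exact mod-$p^2$ determination of the Cartier matrix on the level-two quotient --- is left as ``granting that computation.'' You correctly identify that a mod-$p$ statement is not enough, but you give no mechanism that would force the coefficient of $1/f^\sigma$ to be exactly $1$ modulo $p^2$ and all other components to vanish modulo $p^2$; asserting that the expansion ``has the right image'' is the theorem itself. The paper pins the four unknown coefficients down without brute force: writing $\cartier(1/f)\equiv\beta_1/f^\sigma+\beta_2x_1/(f^\sigma)^2+\beta_3x_2/(f^\sigma)^2+\beta_4\,\theta(1/f)^\sigma$ modulo $p^2\fil_2^\sigma$, it (i) specializes $x_2=0$ (resp.\ $x_1=0$), where $\cartier\bigl(1/(1-x)\bigr)=1/(1-x)$ exactly, giving $\beta_1=1$, $\beta_2=\beta_3=0$; (ii) uses $\theta^2(1/f)\in\fil_2$ together with $\theta(\nu^\sigma)=p(\theta\nu)^\sigma$ --- valid precisely because $t^\sigma=t^p$ --- to conclude $\theta^2\beta_4=0$, hence $\beta_4$ is a constant; and (iii) kills this constant by a supercongruence argument: the coefficient $a_Q(t)$ of $(x_1x_2)^Q$ in $1/f$ is a monic polynomial of degree $Q$ in $t$, and comparing coefficients of $t^{p^s}$ in the resulting congruence $a_{p^s}(t)\equiv a_{p^{s-1}}(t^p)+\beta_4(\theta a_{p^{s-1}})(t^p)\bmod{p^{2s}}$ gives $\beta_4\equiv0\bmod{p^{s+1}}$ for all $s$, so $\beta_4=0$. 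Unless you either execute the explicit coefficient computation in full or supply substitutes for steps (i)--(iii), your proposal sets the stage but does not prove the statement.
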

This theorem can be proven in a straightforward way by computation of the coefficients
of the power series expansion of $1/f$. However, we prefer to give another proof
which illustrates the methods from the previous sections. First we like to mention a
consequence.

\begin{corollary}\label{simple-example-specialization}
For any prime $p\ge3$ we have
\[
\cartier\left(\frac{1}{1-x_1-x_2+2x_1x_2}\right)\is\frac{1}{1-x_1-x_2+2x_1x_2}\mod{p^2\fil_2^\sigma}.
\]
\end{corollary}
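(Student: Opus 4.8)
The plan is to obtain this corollary from Theorem~\ref{theorem-example1} by specializing the parameter $t$ to $-1$. Note first that $1-x_1-x_2+2x_1x_2$ is precisely $f(\v x)$ evaluated at $t=-1$, since $f(\v x)=1-x_1-x_2+(1-t)x_1x_2$. As $p\ge3$, the element $-1$ is a unit in $\Z_p$, so there is a unique continuous ring homomorphism $\pi: R\to\Z_p$ with $\pi(t)=-1$ (recall $R$ is the $p$-adic completion of $\Z_p[t,1/t]$, and $1/t\mapsto-1$). Since $p$ is odd we have $\pi(t^\sigma)=\pi(t^p)=(-1)^p=-1=\pi(t)$, and the Frobenius lift on $\Z_p$ is the identity; hence $\pi$ intertwines the Frobenius lifts. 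I would then argue that applying $\pi$ to the coefficients of rational functions induces a map $\pi_*:\hat\Omega_f\to\hat\Omega_{\bar f}$, where $\bar f=1-x_1-x_2+2x_1x_2$ has the same Newton polytope as $f$ and constant term $1$; that $\pi_*$ intertwines the Cartier operators, because $\cartier$ acts on Laurent expansions simply by $\sum_{\v u}a_{\v u}\v x^{\v u}\mapsto\sum_{\v u}a_{p\v u}\v x^{\v u}$; and that compatibility of $\pi$ with $\sigma$ gives $\pi_*(f^\sigma)=\bar f$ and $\pi_*(\hat\Omega_{f^\sigma})=\hat\Omega_{\bar f}$.

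Next I would check that $\pi_*$ carries $\fil_2$ into $\fil_2$. By Lemma~\ref{katzlemma} together with Corollary~\ref{formal-2-fil-k}, membership in $\fil_2$ is equivalent to all Laurent expansion coefficients $a_{\v u}$ lying in $g.c.d.(u_1,\ldots,u_n)^2R$; this divisibility is preserved under the ring map $\pi$, and the converse implication of Lemma~\ref{katzlemma}, applied over $\Z_p$, then places the image in $\fil_2\hat\Omega_{\bar f}$. With this in hand the conclusion is immediate: writing the congruence of Theorem~\ref{theorem-example1} as $\cartier(1/f)-1/f^\sigma=p^2\eta$ with $\eta\in\fil_2\hat\Omega_{f^\sigma}$ and applying $\pi_*$, the left-hand side becomes $\cartier(1/\bar f)-1/\bar f$ (using $\pi_*(1/f)=1/\bar f=\pi_*(1/f^\sigma)$ and $\pi_*\circ\cartier=\cartier\circ\pi_*$), while the right-hand side becomes $p^2\pi_*(\eta)$ with $\pi_*(\eta)\in\fil_2\hat\Omega_{\bar f}=\fil_2^\sigma\hat\Omega_{\bar f}$, the Frobenius on $\Z_p$ being the identity. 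This is exactly the asserted $\cartier(1/\bar f)\is 1/\bar f\mod{p^2\fil_2^\sigma}$.

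The only genuinely nontrivial point is setting up the pushforward $\pi_*$ on the $p$-adic completions $\hat\Omega$ and confirming that it commutes with $\cartier$ and respects $\fil_2$; once this functoriality in the base ring is established, the corollary is pure bookkeeping. As an alternative one could verify the Hasse--Witt conditions of Corollary~\ref{main-theorem-alt} directly for $\bar f$ and repeat the proof of Theorem~\ref{theorem-example1} verbatim, but the specialization route is shorter and requires no new computation.
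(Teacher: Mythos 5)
Your proposal is correct and follows exactly the paper's route: the paper's entire proof is ``Set $t=-1$ in Theorem~\ref{theorem-example1}'', and you have simply made explicit the specialization homomorphism $\pi:R\to\Z_p$, its compatibility with $\sigma$ (using $(-1)^p=-1$ for odd $p$), the commutation of $\pi_*$ with $\cartier$, and the preservation of $\fil_2$ via the coefficient characterization of Lemma~\ref{katzlemma}. These are precisely the implicit justifications behind the paper's one-line proof, so there is nothing to add.
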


\begin{proof}
Set $t=-1$ in Theorem \ref{theorem-example1}.
\end{proof}

Write $\frac{1}{1-x_1-x_2+2x_1x_2}=\sum_{k,l\ge0}\alpha_{k,l}x_1^kx_2^l$. 
Taking the coefficient at $x_1^{k p^{s-1}}x_2^{l p^{s-1}}$ in the identify from
Corollary~\ref{simple-example-specialization} we obtain that
\[
\alpha_{kp^s,lp^s}\is\alpha_{kp^{s-1},lp^{s-1}}\mod{p^{2s}}
\]
for all $k,l,s\ge0$. This is a special case of a more general phenomenon of 
supercongruences for Taylor series coefficients of rational functions in several
variables. A striking example is Armin Straub's discovery \cite{Straub14} of supercongruences 
modulo $p^{3s}$ for the Taylor coefficients of 
\[
\frac{1}{(1-x_1-x_2)(1-x_3-x_4)-x_1x_2x_3x_4}
\]
The diagonal coefficients $\alpha_{n,n,n,n}$ are the so-called Ap\'ery numbers.

We now prove Theorem \ref{theorem-example1}.
The Newton polytope $\Delta$ is the unit square. Let's work in the subcrystal $\hat\Omega_f(\mu)$ with  
\[
\mu = \Delta \setminus \{\text{upper edge $\cup$ right edge} \}.
\]
Then $(\mu)_\Z=\{(0,0)\}$ and 
\[
(2 \mu)_\Z = \Delta_\Z = \{ (0,0), (1,0), (0,1), (1,1) \}.
\]
Let us verify that the Hasse-Witt determinants $hw^{(1)}(\mu)$ and $hw^{(2)}(\mu)$ are invertible.
The Hasse-Witt matrix $HW^{(1)}(\mu)$ is a $1\times1$-matrix
with entry the constant coefficient of $f(\v x)^{p-1}$, which is 1.
An extended basis in the level 2 part of 
$\hat\Omega_f(\mu)$ is given by
$1/f,x_1/f^2,x_2/f^2,x_1x_2/f^2$. We compute $HW^{(2)}(\mu)$ with respect to this
basis. We have
\[
(p \v v - \v u)_{\v u,\v v \in (2 \mu)_\Z} = 
\begin{pmatrix} (0,0) & * & * & * \\ (-1,0) & (p-1,0) & * & * \\
(0,-1) & (p,-1) & (0,p-1)& * \\ (-1,-1) & (p-1,-1) & (-1,p-1)& (p-1,p-1) 
\end{pmatrix}.
\]
Since all terms under the diagonal have negative components, $HW^{(2)}(\mu)$
is an upper-triangular matrix. To read its diagonal entries,
it is enough to work modulo $(x_1^p,x_2^p)$. In particular we have 
$f^\sigma(\v x^p) \equiv 1$ and the polynomial \eqref{definition-Fk} becomes
\[
F^{(2)}(\v x)=f(\v x)^{p-2}(2 f^\sigma(\v x^p) - f(\v x)^p)
\equiv 2 f(\v x)^{p-2} - f(\v x)^{2p-2} \mod {(x_1^p,x_2^p)}.
\]
The diagonal entries of $HW^{(2)}(\mu)$ are given by
\[
1, - \binom{2p-2}{p-1}, - \binom{2p-2}{p-1}, 
- \binom{2p-2}{p-1} \sum_{m=0}^{p-1} \binom{p-1}{m}^2 (1-t)^m,  
\]
from which we get that 
\[
\binom{2p-2}{p-1}^{-3} \det HW^{(2)}(\mu) \equiv -\sum_{m=0}^{p-1} (1-t)^m
\equiv - t^{p-1} \mod p. 
\]
Note that $\binom{2p-2}{p-1}$ is exactly divisible by $p$. 
Since $t \in R^\times$, $hw^{(2)}(\mu)=\det HW^{(2)}(\mu)/p^3$ is invertible. 
So Theorem \ref{free-quotient-k} and Corollary~\ref{main-theorem-alt} tell us that
\[
\hat\Omega_f(\mu)=R(1/f)\oplus R(x_1/f^2)\oplus R(x_2/f^2)\oplus R(x_1x_2/f^2)\oplus\fil_2,
\]
where $\fil_2$ is short for $\fil_2\hat\Omega_f(\mu)$. Instead of $x_1 x_2/f^2$ we prefer to
use the related basis vector $tx_1x_2/f^2=\theta(1/f)$, where $\theta=t \frac{\partial}{\partial t}$.
A similar decomposition holds for $\hat\Omega_{f^\sigma}(\mu)$ and by Corollary~\ref{decompose-cartier-image} there
exist $\beta_1,\beta_2,\beta_3,\beta_4\in R$ such that
\be{cartier-example1}
\cartier(1/f)\is\beta_1/f^\sigma+\beta_2 x_1/(f^\sigma)^2+\beta_3 x_2/(f^\sigma)^2+
\beta_4 \theta(1/f)^\sigma \mod{p^2\fil_2^\sigma}
\ee
where $\fil_2^\sigma$ is short for $\fil_2\hat\Omega_{f^\sigma}(\mu)$. 
We can expand $1/f$ in a power series in $x_1,x_2$ and set $x_2=0$. We obtain
\[
\cartier\left(\frac{1}{1-x_1}\right)\is\beta_1 \frac{1}{1-x_1}+\beta_2 
\frac{x_1}{(1-x_1)^2}\mod{p^2\fil_2^\sigma}.
\]
Since we have trivially that $\cartier(\frac{1}{1-x})=\frac{1}{1-x}$ we conclude that
$\beta_1=1$ and $\beta_2=0$. Similarly, by setting $x_2=0$, we find that $\beta_3=0$.
The determination of $\beta_4$ is more subtle. One easily verifies that
\[
\theta^2\left(\frac1f\right)=x_1\frac{\partial}{\partial x_1}\left(x_2\frac{\partial}{\partial x_2}\left(t \frac{x_1 x_2}{f}\right)\right) \in \fil_2.
\]
We apply the operator $\theta^2$ to
\eqref{cartier-example1} with $\beta_1=1,\beta_2=\beta_3=0$. We use the relation
$\theta(\nu^\sigma)=p(\theta\nu)^\sigma$ for any rational function $\nu$ and get 
\[
\cartier(\theta^2(1/f))\is p^2\theta^2(1/f)^\sigma
+\theta^2(\beta_4)\theta(1/f)^\sigma+2p\theta(\beta_4)\theta^2(1/f)^\sigma
+p^2\beta_4\theta^3(1/f)^\sigma\mod{\fil_2^\sigma}.
\]
All terms that contain $\theta^2(1/f)^\sigma$ or $\theta^3(1/f)^\sigma$ are in $\fil_2^\sigma$.
Hence we are left with $\theta^2(\beta_4)\theta(1/f)^\sigma\in\fil_2^\sigma$. Since $\theta(1/f)^\sigma$
is a basis vector modulo $\fil_2$ we conclude that
\[
\theta^2\beta_4=0.
\]
Since $\beta_4$ is in the completion of $\Z_p[t,1/t]$, the only possibility is
that $\beta_4$ is a constant.
The remaining work now resides in the determination of this constant $\beta_4$.
As we will see later, the final determination of some
constants in the matrix of $\cartier$ is a recurring theme.
In all cases we perform this calculation by studying supercongruences of the type given
in Proposition \ref{expansion-coeffs-k}. This is a powered up version of Katz's original
idea in \cite{Ka85} of the internal reconstruction of a (unit root) crystal. 

Let $a_Q(t)$ be the
coefficient of $x_1^Qx_2^Q$ in the power series expansion  
\[
\frac1f=\sum_{r=0}^\infty(x_1+x_2+(t-1)x_1x_2)^r.
\]
Taking the coefficient of $x_1^{p^{s-1}}x_2^{p^{s-1}}$ in~\eqref{cartier-example1} we obtain congruence
\be{congruence-example1}
a_{p^s}(t)\is a_{p^{s-1}}(t^p)+\beta_4 (\theta a_{p^{s-1}})(t^p)\mod{p^{2s}}.
\ee

\begin{lemma} The coefficient $a_Q(t)$ is a polynomial of degree $Q$ in $t$ 
with leading term $t^Q$.
\end{lemma}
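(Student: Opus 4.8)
The plan is to extract $a_Q(t)$ directly from the multinomial expansion $1/f = \sum_{r\ge 0}(x_1 + x_2 + (t-1)x_1x_2)^r$. First I would observe that, since each factor $x_1 + x_2 + (t-1)x_1x_2$ contributes at most degree $1$ in each of $x_1$ and $x_2$ per copy, the monomial $x_1^Q x_2^Q$ can only arise from the term $r = 2Q$ in the outermost sum: with fewer than $2Q$ factors one cannot reach total degree $2Q$ in $x_1,x_2$ jointly, and with more than $2Q$ factors the minimal contribution already exceeds degree $2Q$. Actually one must be slightly more careful — $(t-1)x_1x_2$ contributes degree $2$ at once — so the monomials $x_1^Qx_2^Q$ arise from all $r$ with $Q \le r \le 2Q$, where $r-Q$ of the chosen factors contribute $(t-1)x_1x_2$ and the remaining $2Q-r$ factors split as $(2Q-r)/2$ copies of $x_1$... this does not parametrize cleanly, so instead I would argue more structurally.

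The cleaner route: fix $x_2 = 1$ formally is not legitimate, but we can set up a generating-function computation. Write $a_Q(t)$ as the coefficient of $x_1^Q x_2^Q$ in $1/f = 1/(1 - x_1 - x_2 + (1-t)x_1x_2)$. Performing partial fractions in $x_2$, one has
\[
\frac{1}{1 - x_1 - x_2 + (1-t)x_1 x_2} = \frac{1}{(1-x_1) - x_2(1 - (1-t)x_1)} = \frac{1}{1-x_1}\cdot\frac{1}{1 - x_2\,\frac{1-(1-t)x_1}{1-x_1}},
\]
so the coefficient of $x_2^Q$ equals $\dfrac{(1-(1-t)x_1)^Q}{(1-x_1)^{Q+1}}$. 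Now $a_Q(t)$ is the coefficient of $x_1^Q$ in this rational function of $x_1$. Expanding $(1-(1-t)x_1)^Q = \sum_{j=0}^{Q}\binom{Q}{j}(-(1-t))^j x_1^j$ and $(1-x_1)^{-(Q+1)} = \sum_{i\ge 0}\binom{Q+i}{Q}x_1^i$, I would collect the coefficient of $x_1^Q$ to get the explicit closed form
\[
a_Q(t) = \sum_{j=0}^{Q}\binom{Q}{j}\binom{2Q-j}{Q}(t-1)^j.
\]
From this formula both assertions are immediate: each summand is a polynomial in $t$ of degree $j \le Q$, so $a_Q(t)$ has degree $\le Q$; and the unique contribution to degree exactly $Q$ comes from $j = Q$, with coefficient $\binom{Q}{Q}\binom{Q}{Q} = 1$, giving leading term $t^Q$.

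I do not expect a genuine obstacle here — the result is elementary once the partial-fraction reduction in $x_2$ is carried out. The only point requiring a little care is justifying the partial-fraction manipulation at the level of formal power series rather than rational functions: one should note that $\dfrac{1-(1-t)x_1}{1-x_1}$ has constant term $1$, so the geometric series in $x_2$ converges in $R\pow{x_1,x_2}$ and the coefficient extraction is legitimate. Alternatively, if one prefers to avoid even that, the same closed form for $a_Q(t)$ follows by a direct count in the multinomial expansion, choosing $j$ of the $r$ factors to supply $(t-1)x_1x_2$ and distributing the rest, and then summing over $r$; the degree and leading-coefficient claims drop out the same way. Either way the lemma is a short computation.
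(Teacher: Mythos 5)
Your argument is correct, and the closed form you obtain, $a_Q(t)=\sum_{j=0}^{Q}\binom{Q}{j}\binom{2Q-j}{Q}(t-1)^j$, is exactly right; the degree and leading-coefficient claims then follow as you say. The paper, however, takes the route you abandoned in your first paragraph and dismissed as not parametrizing cleanly: it does parametrize cleanly. In $(x_1+x_2+(t-1)x_1x_2)^r$ one must choose $c$ factors of $(t-1)x_1x_2$ and $a$, $b$ factors of $x_1$, $x_2$ with $a+c=b+c=Q$ and $a+b+c=r$, which forces $a=b=r-Q$ and $c=2Q-r$; so the coefficient of $x_1^Qx_2^Q$ is $0$ for $r<Q$ and equals $\frac{r!}{(r-Q)!\,(r-Q)!\,(2Q-r)!}(t-1)^{2Q-r}$ for $Q\le r\le 2Q$, of degree $2Q-r<Q$ unless $r=Q$, where it is $(t-1)^Q$. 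That one-line count (which is also the variant you sketch at the end) is the paper's whole proof, and reindexing $j=2Q-r$ shows it gives precisely your binomial sum. Your partial-fraction reduction in $x_2$ is a perfectly legitimate alternative — and your remark that $\frac{1-(1-t)x_1}{1-x_1}$ has constant term $1$, so the geometric series in $x_2$ is valid formally, is the right point of care — but it is slightly longer than needed; the only real slip in your write-up is the claim that the direct multinomial approach is awkward, when in fact it is the shortest path.
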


\begin{proof}
The coefficient of $x^Qy^Q$ in $(x+y+(t-1)xy)^r$ is $0$ if $r<Q$ and 
$\frac{r!}{(r-Q)!(r-Q)!(2Q-r)!}(t-1)^{2Q-r}$ when $r \ge Q$. The latter term 
is of degree $<Q$ when $r>Q$, which shows that $a_Q(t)$ has the desired form.
\end{proof}

As corollary we see that $\theta a_Q(t)$ has again degree $Q$ but highest
degree coefficient $Q$. Taking the coefficients of $t^{p^s}$ in 
\eqref{congruence-example1} we
find that
\[
1\is 1+\beta_4 p^{s-1}\mod{p^{2s}}.
\]
Hence $\beta_4\is0\mod{p^{s+1}}$ for all $s\ge1$ and thus we conclude $\beta_4=0$.
This proves Theorem \ref{theorem-example1}.

Finally we formulate a version of Theorem \ref{theorem-example1} for 
general Frobenius lifts $\sigma: R \to R$.
To that end we start with Theorem \ref{theorem-example1} and re-expand 
$\frac1f(t^p)$
in $\hat\Omega_{f^\sigma}$. For a function $h(t)$ one has the expansion
\[
h(b e^x)=\sum_{r\ge0}\frac{1}{r!}(\theta^r h)(b)x^r.
\]
We apply it to $h=1/f$, $b=t^\sigma$ and $x=\log(t^p/t^\sigma)\in R$. 
Since $t^\sigma/t^p \in 1 + pR$ then $\log(t^p/t^\sigma) = \sum_{m=1}^\infty 
(-1)^{m-1} (t^\sigma/t^p)^m / m \in pR$.  Hence
\[
\frac1f(t^p)=\sum_{r\ge0}\frac{1}{r!}\left(\theta^r \frac1f \right)(t^\sigma)\log(t^p/t^\sigma)^r
\]
is a $p$-adically converging series. Note that for $r\ge2$ the terms are in $p^2 \fil_2^\sigma$
because $\theta^2(1/f)\in\fil_2$ and $\ord_p(r!)\le r-2$. Thus we find that 
\[
\frac1f(t^p)\is \frac1f(t^\sigma)+\log(t^\sigma/t^p)\, \left(\theta \frac1f \right)(t^\sigma)\mod{p^2 \fil_2^\sigma}.
\]
Hence it follows from Theorem \ref{theorem-example1} that
\begin{corollary}
For any Frobenius lift $\sigma:R \to R$ we have
\[
\cartier\left( \frac1f\right) \is \frac1{f^\sigma} + \log(t^\sigma/t^p) \; \left(\theta \frac1f \right)^\sigma \;\mod{p^2 \fil_2^\sigma}.
\]
\end{corollary}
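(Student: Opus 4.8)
The plan is to bootstrap from Theorem~\ref{theorem-example1}, which is the special case of the lift $\sigma_0\colon t\mapsto t^p$, by a change-of-Frobenius-lift argument carried out on the power-series expansions at $\v 0$. The key point is that, once rational functions are expanded as power series $\sum_{\v u}c_{\v u}\v x^{\v u}$ with $c_{\v u}\in R$ (legitimate here since the constant term of $f$ is a unit), the Cartier operator acts by $\sum c_{\v u}\v x^{\v u}\mapsto\sum c_{p\v u}\v x^{\v u}$, a rule that does not involve the Frobenius lift at all; the lift enters only through Proposition~\ref{cartier-stable}, which records in which crystal the result sits. Hence $\cartier(1/f)$ is literally the same power series whether we regard $\cartier$ as landing in $\hat\Omega_{f^{\sigma_0}}$ or in $\hat\Omega_{f^\sigma}$, and Theorem~\ref{theorem-example1} says exactly that this series differs from the expansion of $(1/f)\big|_{t\mapsto t^p}$ by a series all of whose coefficients lie in $p^2\gcd(u_1,\ldots,u_n)^2R$ (using Corollary~\ref{formal-2-fil-k} and Lemma~\ref{katzlemma} to unpack ``$\in p^2\fil_2^{\sigma_0}$''). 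So the whole task reduces to re-expanding $(1/f)\big|_{t\mapsto t^p}$ inside $\hat\Omega_{f^\sigma}$ modulo $p^2\fil_2^\sigma$.

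For this I would apply the formal Taylor identity $h(b\,e^x)=\sum_{r\ge0}\frac1{r!}(\theta^rh)(b)\,x^r$, with $\theta=t\frac{\partial}{\partial t}$, coefficientwise to the expansion of $1/f$ (each coefficient a polynomial in $t$), taking $b=t^\sigma$ and $x=\log(t^p/t^\sigma)$. Since $\sigma$ is a Frobenius lift and $t$ is a unit in $R$, we have $t^\sigma/t^p\in 1+pR$, hence $\log(t^p/t^\sigma)\in pR$, and for the odd prime $p$ the series
\[
(1/f)\big|_{t\mapsto t^p}=\sum_{r\ge0}\frac1{r!}\,\log(t^p/t^\sigma)^r\,\bigl(\theta^r(1/f)\bigr)^\sigma
\]
converges $p$-adically in $\hat\Omega_{f^\sigma}$, where $(\cdot)^\sigma$ now denotes substitution $t\mapsto t^\sigma$ in a rational function. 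The $r=0$ term is $1/f^\sigma$, and the $r=1$ term is $\log(t^p/t^\sigma)\,(\theta(1/f))^\sigma=-\log(t^\sigma/t^p)\,(\theta(1/f))^\sigma$, which is the second term on the right of the asserted congruence (up to the sign convention used there).

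To dispose of the tail $r\ge2$ I would note two things. First, $\theta^2(1/f)\in\fil_2$ — this was already verified in the proof of Theorem~\ref{theorem-example1} — and $\theta$ preserves $\fil_2$: acting only on the $t$-variable, $\theta$ leaves intact the $\gcd(u_1,\ldots,u_n)^2$-divisibility of coefficients from Lemma~\ref{katzlemma}(ii), so $\theta^r(1/f)=\theta^{r-2}(\theta^2(1/f))\in\fil_2$, and consequently $(\theta^r(1/f))^\sigma\in\fil_2^\sigma$ ($\sigma$ fixes $\Z$, so coefficientwise divisibility again survives). Second, $\ord_p\!\bigl(\tfrac1{r!}\log(t^p/t^\sigma)^r\bigr)\ge r-\ord_p(r!)\ge r-\tfrac{r-1}{p-1}\ge\tfrac{r+1}{2}\ge 2$ for $p\ge 3$ and $r\ge2$. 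Hence every $r\ge2$ term lies in $p^2\fil_2^\sigma$, giving $(1/f)\big|_{t\mapsto t^p}\equiv 1/f^\sigma+\log(t^p/t^\sigma)(\theta(1/f))^\sigma\mod{p^2\fil_2^\sigma}$; combined with the first paragraph this is the corollary.

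I expect the one genuinely delicate point to be the bookkeeping across the two crystals: the congruence supplied by Theorem~\ref{theorem-example1} lives in $\hat\Omega_{f^{\sigma_0}}$ while the target is $\hat\Omega_{f^\sigma}$, and the intermediate object $(1/f)\big|_{t\mapsto t^p}$ is \emph{not} an element of $\hat\Omega_{f^\sigma}$. The clean way around this is to observe that the final difference $\cartier(1/f)-1/f^\sigma-\log(t^p/t^\sigma)(\theta(1/f))^\sigma$ \emph{is} an honest element of $\hat\Omega_{f^\sigma}$, while the characterization of $\fil_2$ as the submodule of series with coefficients divisible by the square of the exponent gcd makes no reference to which crystal one works in; so once the preceding two paragraphs show this difference has all coefficients divisible by $p^2\gcd^2$, it automatically lies in $p^2\fil_2^\sigma$. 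The remaining ingredients — $p$-adic convergence of the Taylor series and $\theta$-stability of $\hat\Omega_f$ — are routine.
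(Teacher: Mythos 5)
Your argument is correct and follows essentially the same route as the paper: start from the lift $t\mapsto t^p$ (Theorem~\ref{theorem-example1}) and re-expand $\tfrac1f(t^p)$ inside $\hat\Omega_{f^\sigma}$ via $h(be^x)=\sum_{r\ge0}\tfrac{1}{r!}(\theta^r h)(b)x^r$ with $b=t^\sigma$ and $x=\log(t^p/t^\sigma)\in pR$, disposing of the $r\ge2$ tail by $\theta^2(1/f)\in\fil_2$ together with the valuation estimate; your extra care in using the coefficientwise (Katz-lemma) description of $\fil_2$ to pass between the crystals $\hat\Omega_{f^{\sigma_0}}$ and $\hat\Omega_{f^\sigma}$ merely makes explicit what the paper leaves implicit. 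The sign you obtain, $\log(t^p/t^\sigma)$, is the one the Taylor expansion actually produces (and matches the paper's own intermediate display), so the $\log(t^\sigma/t^p)$ appearing in the printed corollary is a sign slip in the paper rather than a gap in your proof.
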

We thus see that the coefficients of the Cartier matrix modulo $\fil_2$ depend
on the choice of the Frobenius lift $\sigma$. Note that $t \mapsto t^p$ is the unique lift for which the above idenitity simplifies to  $\cartier(1/f) \is 1/f^\sigma \mod {p^2 \fil_2^\sigma}$. It reminds us of what
Dwork called an {\it excellent Frobenius lift}, see \cite[Section 2]{Dwork73}. We will see more examples of such special lifts in the following section.

\section{Completely symmetric Calabi-Yau families}\label{symmetric-CY}
In this section we shall deal with symmetric cases of Calabi-Yau families.
Recall Example~\ref{example-admissible} where a Calabi-Yau family is given by
$f(\v x)=1-tg(\v x)=0$, where $g\in\Z[x_1^{\pm1},\ldots,x_n^{\pm1}]$, and $g$ has a reflexive Newton polytope $\Delta$. From reflexivity of $\Delta$ it follows that $\v 0$ is the unique interior lattice point
in its interior $\Delta^\circ$. For $R$ we take a $p$-adically closed subring of $\Z_p\lb t \rb$, which will be specified later. On $R$ we fix a Frobenius lift $\sigma$ such that $t^\sigma \in t^p R$.

A monomial substitution is a substitution
of the form $x_i\to \v x^{\v m_i}, i=1,\ldots,n$, where $\v m_i\in\Z^n$ and 
$|\det(\v m_1,\ldots,\v m_n)|=1$. Let $\mathcal{G}$ be a finite group of monomial substitutions
that fix $g$ and such that $p$ does not divide $\#\mathcal{G}$. Recall from Definition~\ref{admissible-module} that polynomial $A = \sum a_\v u \v x^\v u$ with coefficients in $\Z_p\lb t \rb$ is called admissible if $\ord_t(a_\v u) \ge \deg(\v u)$ for each $\v u \in Supp(A)$. Then the module $L$ of admissible
Laurent polynomials that are fixed under $\mathcal{G}$ is ($\sigma,f$)-compatible.
To prove this we use the fact that $\cartier$ commutes with elements of $\mathcal{G}$.
We call $g$ {\it completely symmetric} if the only non-zero lattice points in $\Delta$ are vertices and $\mathcal{G}$ acts transitively on the
set of vertices. In particular, all non-constant terms of $g$ have the same coefficient,
 which we will call $\gamma$. The standing assumption is that $\gamma\in\Z_p^\times$. 

\begin{definition}\label{completely-symmetric}
Let $g(\v x)$ be completely symmetric and $L \subset R[x_1^{\pm1},\ldots,x_n^{\pm1}]$ be the $R$-module of $\mathcal{G}$-invariant admissible Laurent polynomials. Then $\hat\Omega_{L,f}(\Delta^\circ)$ is called a \emph{completely symmetric Calabi-Yau crystal}. 
We denote it by $CY(g)$.
\end{definition} 

Here are some typical examples of completely symmetric families $1-tg(\v x)=0$, named after the shape of
their Newton polytope.
\smallskip

{\bf Simplicial family}.
Let 
\[
g(\v x)=x_1+x_2+\cdots+x_n+\frac{1}{x_1\cdots x_n}.
\]
The symmetry group $\mathcal{G}$ is generated
by the permutations of $x_1,\ldots,x_n$ and the involution that maps $x_1$ to $1/(x_1\cdots x_n)$
and $x_i$ to itself for $i\ge2$. These families are quotients of the so-called Dwork families
which one usually finds in the literature. Consider the Dwork family given by the projective
equation $y_0^{n+1}+y_1^{n+1}+\cdots+y_n^{n+1}=\lambda y_0y_1\cdots y_n$. It has
a symmetry group consisting of substitutions $y_i\to\zeta_iy_i$ with
$\zeta_i^{n+1}=1$ for all $i$ and $\zeta_0\zeta_1\cdots\zeta_n=1$. The functions 
$x_i=y_i^{n+1}/(y_0y_1\cdots y_n)$ with $i=1,\ldots,n$ generate the field of invariants under this group.
One easily sees that $x_1+\cdots+x_n+\frac{1}{x_1\cdots x_n}=\lambda$.
Letting $\lambda=1/t$ we recover our simplicial family.
\smallskip

{\bf Hypercubic family}.
Let 
\[
g(\v x)=\left(x_1+\frac{1}{x_1}\right)\left(x_2+\frac{1}{x_2}\right)\cdots\left(x_n+\frac{1}{x_n}\right).
\]
The group $\mathcal{G}$ is now generated by the permutations of $x_1,\ldots,x_n$ and the involution
that maps $x_1$ to $1/x_1$ and $x_i$ to itself for $i\ge2$. Notice that the lattice spanned by the
exponent vectors of $g(\v x)$ consists of $(k_1,\ldots,k_n)\in\Z^n$ where $k_1,\ldots,k_n$ are all even
or all odd. This is a sublattice of index $2^{n-1}$ in $\Z^n$.
\smallskip

{\bf Hyperoctahedral family}.
Let
\[
g(\v x)=x_1+\frac{1}{x_1}+x_2+\frac{1}{x_2}+\cdots+x_n+\frac{1}{x_n}.
\]
The group $\mathcal{G}$ is the same as in the previous example, but the lattice spanned by the
support of $g$ is $\Z^n$. 
\smallskip

{\bf \An-family}.
Let 
\[
g(\v x)=(1+x_1+\cdots+x_n)\left(1+\frac{1}{x_1}+\cdots+\frac{1}{x_n}\right).
\]
The symmetry becomes manifest if we introduce homogeneous variables $y_0,y_1,\ldots,y_n$
and set $x_i=y_i/y_0$. We then obtain
\[
g=(y_0+y_1+\cdots+y_n)\left(\frac{1}{y_0}+\frac{1}{y_1}+\cdots+\frac{1}{y_n}\right),
\]
The symmetry in the $y_i$ is obvious. If we interchange
$y_i$ and $y_j$ with $i,j\ne0$, the corresponding affine variables $x_i$ and $x_j$ interchange
as well. If we interchange $y_0$ and $y_i$ with $i\ne0$, the affine variables $x_j$ undergo the
monomial transformation $x_i\to1/x_i$ and $x_j\to x_j/x_i$ for all $j\ne i$.
The group $\mathcal{G}$ is the generated by these tranformations together with the
involution $x_i\to1/x_i,i=1,\ldots,n$.
The lattice spanned by the support of $g$ in the $x_i$ is simply $\Z^n$.
The name of the family derives from the observation that the support of the homogeneous $g$ is 
the root system $A_n$. When $n=4$
the family is also called the Hulek-Verrill family, which is extensively studied in \cite{COES19}.
\smallskip

Consider the power series
\[
F(t):=\sum_{n\ge0}g_nt^n, \quad g_n  =\text{constant term of }g(\v x)^n.  
\]
A straightforward calculation shows that in the case of a simplicial family we have
\[
F(t)={}_nF_{n-1}\left(\left.\tfrac{1}{n+1},\tfrac{2}{n+1}\ldots,\tfrac{n}{n+1};1,\ldots,1\right|((n+1)t)^{n+1}\right),
\]
where ${}_nF_{n-1}$ is one variable hypergeometric function of order $n$. It satisfies
the differential equation
\[
\mathcal{P}F=0\text{ with }\mathcal{P}=-\theta^n+((n+1)t)^{n+1}(\theta+1)\cdots(\theta+n),
\]
where $\theta=t\frac{d}{dt}$. 

Another straightforward computation shows that in the hypercubic example we have
\[
F(t)={}_nF_{n-1}\left(\left.\tfrac{1}{2},\tfrac{1}{2}\ldots,\tfrac{1}{2};1,\ldots,1\right|4^nt^2\right).
\]
It satisfies the differential equation
\[
\mathcal{P}F=0\text{ with }\mathcal{P}=-\theta^n+4^nt^2(\theta+1)^n. 
\]
The last two families are not of hypergeometric type. When $n=4$, the hyperoctahedral family
corrsponds to \#16 in the database of Calabi-Yau equations \cite{AESZ10} with $z=t^2$. 
When $n=4$, the {\bf \An}-family corresponds to \#34 in \cite{AESZ10}.
In the discussion following Proposition
\ref{second-order-de} we shall display a more complete list of formulas. 

We choose a completely symmetric $g(\v x)$. 
Let us compute the Hasse-Witt determinants.
The part of level~1 in $CY(g)$ is spanned by $1/f$, and due to the complete symmetry
condition the part of level~2 has $1/f^2$ and
$tg/f^2$ as a basis. It is clear from
Definition~\ref{higherHasseWitt} that the first Hasse-Witt determinant $hw^{(1)}(t)$ is 
independent of $\sigma$. Moreover, we have
\[\bal
hw^{(1)}(t) = \text{ constant term of } f(\v x)^{p-1} &=
\sum_{n=0}^{p-1} \binom{p-1}{n} (-1)^n g_n t^n \\
&\is  \sum_{n=0}^{p-1} g_n t^n \quad \mod p. 
\eal\]
Since $R$ is $p$-adically complete, we note that an element is invertible in $R$
if and only if it is invertible modulo $p$. Suppose that $hw^{(1)} \in R^\times$.
Then Theorem~\ref{main-Part-I} implies that $CY(g) = R \frac1 f \oplus \fil_1$ and
we have $\cartier(1/f) = \lambda / f^\sigma \mod {p \fil_1^\sigma}$ 
for some $\lambda \in R$. Expanding both sides as formal series at $\v 0$ and taking
coefficients at $\v x^\v 0$, we find that $F(t)=\lambda F(t^\sigma)$.  
Therefore we have congruence

\be{congruence-mod-fil1}
\cartier\left(\frac{1}{f(\v x)}\right)\is \frac{F(t)}{F(t^\sigma)}\frac{1}{f^\sigma(\v x)}
\mod{p\fil_1^\sigma}.
\ee
This fact was essentially proved in~\cite[Corollary 3.1]{BeVl20II}, where
we have used the Frobenius lift $t\to t^p$, but the proof
runs equally well with any Frobenius lift. A main objective of this section is to show
that for a special Frobenius lift $\sigma$ congruence~\eqref{congruence-mod-fil1} holds
modulo $p^2 \fil_2^\sigma$. We first make an observation regarding the second
Hasse-Witt determinant.

\begin{lemma} Modulo $p$ the determinant $hw^{(2)}(t)$ is a polynomial in $t$ which
is independent of the lift $\sigma$. Moreover, $hw^{(2)}(0) \in \Z_p^\times$.
\end{lemma}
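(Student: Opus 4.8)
\emph{Setup.} Complete symmetry of $g$ makes $M:=CY(g)$ have $M(1)=R\cdot\tfrac1f$ free of rank $m_1=1$ and $M(2)$ free of rank $m_2=2$ with basis $\tfrac1{f^2},\tfrac{tg}{f^2}$, so $L(2)=m_2-m_1=1$ and $hw^{(2)}=p^{-1}\det HW^{(2)}$. I would compute $HW^{(2)}$ in this basis, recording that $F^{(2)}=f^{p-2}\bigl(2f^\sigma(\v x^p)-f(\v x)^p\bigr)$ by \eqref{definition-Fk}.

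\emph{Value at $t=0$.} Here $f=f^\sigma(\v x^p)=f(\v x)^p=1$, hence $F^{(2)}|_{t=0}=1$, so $HW^{(2)}_{11}|_{t=0}=1$, while $HW^{(2)}_{21}|_{t=0}=0$ because every coefficient of $\cartier(tg\,F^{(2)})$ is divisible by $t$; thus $\det HW^{(2)}|_{t=0}=HW^{(2)}_{22}|_{t=0}$. For a vertex $\v v$ of $\Delta$, $HW^{(2)}_{22}=\tfrac{t}{t^\sigma}\sum_{\v w}[F^{(2)}]_{p\v v-\v w}$, the sum over vertices $\v w$ and $[\,\cdot\,]_{\v a}$ the coefficient of $\v x^{\v a}$. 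Since $[F^{(2)}]_{\v a}$ is divisible by $t^{\deg(\v a)}$ and reflexivity plus subadditivity of $\deg$ give $\deg(p\v v-\v w)\ge p-1$ with equality only for $\v w=\v v$ (a supporting functional at the vertex $\v v$ forces $\deg(p\v v-\v w)\ge p$ otherwise), the lowest power of $t$ comes solely from $[F^{(2)}]_{(p-1)\v v}$, whose $t^{p-1}$-coefficient is $-\binom{2p-2}{p-1}\gamma^{p-1}$, contributed by $-f^{2p-2}$ in $F^{(2)}$ using that $\v x^{(p-1)\v v}$ appears in $g^{p-1}$ with coefficient $\gamma^{p-1}$. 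As $p\ge3$ gives $\ord_p\binom{2p-2}{p-1}=1$ with $\binom{2p-2}{p-1}/p\in\Z_p^\times$, and $\gamma,(t^\sigma/t^p)|_{t=0}\in\Z_p^\times$, we get $hw^{(2)}(0)=\det HW^{(2)}|_{t=0}/p\in\Z_p^\times$.

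\emph{Independence of $\sigma$ and polynomiality mod $p$.} I would pass to the extended basis $\omega_1=\tfrac1f$, $\omega_2=\eta:=\theta\bigl(\tfrac1f\bigr)-\tfrac{\theta F}{F}\tfrac1f$ with $\theta=t\tfrac{d}{dt}$: since the connection on $M/\fil_1M\cong R\cdot[\tfrac1f]$ is multiplication by $\theta F/F$ (compare $\v x^{\v 0}$-coefficients), $\eta$ lies in $\fil_1M\cap M(2)$, a free rank-$1$ module it generates, and $\{\omega_1,\omega_2\}$ differs from $\{\tfrac1{f^2},\tfrac{tg}{f^2}\}$ by a base change of determinant $1$, so $hw^{(2)}$ is unchanged. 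Let $\widetilde{HW}^{(2)}$ be $HW^{(2)}$ with its second column divided by $p$, so $hw^{(2)}=\det\widetilde{HW}^{(2)}$ as in the proof of Proposition~\ref{hwc-maximal}. Its reduction mod $p$ is upper triangular: the $(2,1)$-entry is the coefficient of $\tfrac1{f^\sigma}$ in $\cartier(\eta)$, which vanishes since $\cartier(\eta)\in p\,\fil_1M^\sigma$ by Corollary~\ref{cartier-div-by-k-on-fil-k} whereas $M^\sigma(1)\cap\fil_1M^\sigma=0$ (the $\v x^{\v 0}$-coefficient of any element of $\fil_1$ vanishes, that of $r/f^\sigma$ is $rF(t^\sigma)$). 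Hence $hw^{(2)}\equiv\widetilde{HW}^{(2)}_{11}\cdot\widetilde{HW}^{(2)}_{22}\pmod p$, and it remains to see both diagonal entries are $\sigma$-free mod $p$. The first is the level-$1$ Hasse--Witt determinant $hw^{(1)}\equiv\sum_{n=0}^{p-1}\binom{p-1}{n}(-t)^n g_n\pmod p$, which involves no $\sigma$. The second is the matrix entry of $p^{-1}\cartier\colon\fil_1M/\fil_2M\to\fil_1M^\sigma/\fil_2M^\sigma$, which mod $p$ is $\sigma$-independent because on formal expansions $\cartier$ is the $\sigma$-free rule $\sum c_{\v u}\v x^{\v u}\mapsto\sum c_{p\v u}\v x^{\v u}$ (with $c_{p\v u}$ divisible by $\gcd(p\v u)=p\gcd(\v u)$ on $\fil_1$), while the mod-$p$ reductions of $M^\sigma,\fil_1M^\sigma,\fil_2M^\sigma$ do not depend on $\sigma$ as $t^\sigma\equiv t^p\pmod p$. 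Finally $hw^{(2)}\bmod p$ is a polynomial: for $\sigma_0\colon t\mapsto t^p$ the polynomial $F^{(2)}$ is polynomial in $(\v x,t)$, so $\det HW^{(2)}\in pR$ is a polynomial in $t$ and hence so is $hw^{(2)}$.

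The main obstacle I expect is the support/degree bookkeeping: confirming that reflexivity of $\Delta$ together with $p\ge3$ really kills all the off-diagonal and low-order contributions used above (the vanishing of $HW^{(2)}_{21}|_{t=0}$, the absence of low-order terms in $\sum_{\v w}[F^{(2)}]_{p\v v-\v w}$ beyond $\v w=\v v$, and the vanishing of the $(2,1)$-entry of $\widetilde{HW}^{(2)}$ mod $p$) and pinning the extremal coefficient of $F^{(2)}$ down to $-\binom{2p-2}{p-1}\gamma^{p-1}$ up to units. A secondary subtlety is the clean justification that $p^{-1}\cartier$ on $\fil_1M/\fil_2M$, reduced mod $p$, is genuinely independent of the Frobenius lift, i.e.\ that for varying $\sigma$ the canonical identifications of the mod-$p$ reductions of $M^\sigma$ respect $\cartier$ and the filtration $\fil_\bullet$.
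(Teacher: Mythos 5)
Your evaluation at $t=0$ is essentially the paper's own computation: you isolate the extremal coefficient of $\v x^{(p-1)\v v}$ coming from $-f^{2p-2}$, use that a vertex $\v v$ forces the unique monomial $(\v x^{\v v})^{p-1}$, and conclude $hw^{(2)}(0)\in\Z_p^\times$ from $\ord_p\binom{2p-2}{p-1}=1$. Two small slips there: your formula $HW^{(2)}_{22}=\frac{t}{t^\sigma}\sum_{\v w}[F^{(2)}]_{p\v v-\v w}$ omits the constant term $\alpha$ of $g$ (nonzero for the \An-family) and the $\gamma$-normalization of reading off against $t^\sigma g$; the extra term is $O(t)$ and the normalization is a unit, so $hw^{(2)}(0)\in\Z_p^\times$ survives, but the displayed identity is not correct as stated.

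The independence-of-$\sigma$ part is where the genuine gaps are, and it is also much heavier than what the paper does. First, you use that $\eta=\theta(1/f)-\frac{\theta F}{F}\frac1f$ lies in $\fil_1 M$ (which already needs the decomposition $M\cong M(1)\oplus\fil_1M$) and, more seriously, you interpret $\widetilde{HW}^{(2)}_{22}$ as the matrix entry of $p^{-1}\cartier$ on $\fil_1M/\fil_2M$, i.e.\ you use that this quotient is free of rank one generated by $[\eta]$. That is the level-$2$ statement of Theorem \ref{free-quotient-k}, which requires the second Hasse--Witt condition — exactly what this lemma is needed to help set up, so as written the argument is circular; it can be rescued only by reorganizing (run your $t=0$ computation for every lift with $t^\sigma\in t^pR$, conclude $hw^{(1)},hw^{(2)}\in\Z_p\lb t\rb^\times$, and then work over $R=\Z_p\lb t\rb$), which you do not do. Second, $\widetilde{HW}^{(2)}_{22}\bmod p$ is the reduction of a quantity that is pinned down only modulo $p^2$ by \eqref{HW-action}, so the assertion that "the mod-$p$ reductions of $M^\sigma,\fil_1M^\sigma,\fil_2M^\sigma$ do not depend on $\sigma$" cannot by itself give $\sigma$-independence: one must exploit $\cartier(\eta)\in p\,\fil_1M^\sigma$ so that $p^{-1}\cartier(\eta)$ is itself a $\sigma$-free Laurent series, use the already-established $p^2\mid HW^{(2)}_{21}$, and then compare coefficients of a vertex monomial (where $\eta^\sigma$ has coefficient $t^\sigma\gamma\cdot(\text{unit})$ and $t^p$ is a non-zero-divisor mod $p$) to see that varying $\sigma$ changes the $\eta^\sigma$-coefficient only by a multiple of $p$; likewise the claimed $\sigma$-independence of $\fil_2M^\sigma$ mod $p$ is nontrivial (for CY-crystals the paper only states one direction of the coefficient criterion at $\v 0$). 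You flag exactly this compatibility as a "secondary subtlety," but it is the heart of the claim and is left open. For contrast, the paper's proof avoids all of this: in the basis $1/f^2$, $tg/f^2$ every entry of $HW^{(2)}$ is, by the explicit formula for $F^{(2)}$ and admissibility, a polynomial in $t$, $t^\sigma$ and $t^p/t^\sigma$ with coefficients not depending on $\sigma$; Proposition \ref{hwc-maximal} gives $p\mid\det HW^{(2)}$, so $hw^{(2)}$ is again such an expression, and reducing mod $p$ one may substitute $t^\sigma\equiv t^p$ and $t^p/t^\sigma\equiv1$ — no filtration quotients or connection data are needed.
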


\begin{proof} Recall that 
\[
F^{(2)}(\v x) = 2 f(\v x)^{p-2} f^\sigma(\v x^p) - f(\v x)^{2p-2}
\]
and the entries of the second Hasse-Witt matrix $HW^{(2)}$ in the basis $1/f^2$, $tg/f^2$ 
are given by 
\begin{eqnarray*}
HW^{(2)}_{00}&=&\mbox{const term of }F^{(2)}(\v x)\\
HW^{(2)}_{10}&=&\mbox{const term of }tg(\v x)F^{(2)}(\v x)\\
HW^{(2)}_{01}&=&\mbox{coefficient of }t^\sigma g(\v x)\mbox{ in }\cartier(F^{(2)}(\v x))\\
HW^{(2)}_{11}&=&\mbox{coefficient of }t^\sigma g(\v x)\mbox{ in }\cartier(tg(\v x)F^{(2)}(\v x)).
\end{eqnarray*}
From Proposition \ref{hwc-maximal} we know that $p$ divides $\det(HW^{(2)})$ and by 
construction we see that $hw^{(2)}(t)$ is a polynomial in $t,t^\sigma$ and $t^p/t^\sigma$.
So modulo $p$ the polynomial $hw^{(2)}(t)$ is independent of the choice of $\sigma$.

Let us now proceed to show that $hw^{(2)}(0)\in\Z_p^\times$ using the particular lift
$t^\sigma=t^p$. 
Notice that immediately $HW^{(2)}_{00}|_{t=0}=1$ and $HW^{(2)}_{10}|_{t=0}=0$.
In order to determine $HW^{(2)}_{11}$ we might as well determine the coefficient
of any non-constant term in $\cartier(tg(\v x)F^{(2)}(\v x))$, say $t^p\v x^{\v b}$,
where $\v b$ is a vertex of $\Delta$. So we need to
determine the coefficient of $t^p \v x^{p\v b}$ in $tg(\v x)F^{(2)}(\v x)$
Let us expand modulo $t^{p+1}$,
\[
tg(\v x)F^{(2)}(\v x)\is -2\sum_{r=0}^{p-2}{p-2\choose r}(-tg(\v x))^{r+1}
+\sum_{r=0}^{p-1}{2p-2\choose r}(-tg(\v x))^{r+1}\mod{t^{p+1}}.
\]
The first summation doesn't contribute anything to the term with $\v x^{p\v b}$.
The only term in the second summation that contributes to $\v x^{p\v b}$ is
${2p-2\choose p-1}(-tg(\v x))^p$. The only term in this $p$-th power that contributes
to $\v x^{p\v b}$ is the term with $(\v x^{\v b})^p$. Hence the desired coefficient is
${2p-2\choose p-1}(-t\gamma)^p/t^p\is-\gamma^p{2p-2\choose p-1}\mod{t}$,
where $\gamma$ is the coefficient of $\v x^{\v b}$ in $g(\v x)$. We are given
that $\gamma\in\Z_p^\times$. So we see that
\[
HW^{(2)}=\begin{pmatrix} 1+O(t) & O(t)\\ * & -\gamma{2p-2\choose p-1}+O(t)\end{pmatrix}.
\]
We get $\det(HW^{(2)})=-\gamma{2p-2\choose p-1}+O(t)$.
From Proposition \ref{hwc-maximal}
it follows that $p$ divides $\det(HW^{(2)})$. Moreover, $p$ divides ${2p-2\choose p-1}$
exactly once and thus we see that $hw^{(2)}=\det(HW^{(2)})/p$ is a polynomial in $\Z_p[t]$
and $hw^{(2)}(0)\in\Z_p^\times$.  
\end{proof}

We are now ready to state the main result of this section. For this we note that $p$-adic completion of $\Z_p[t, 1/hw^{(1)}(t),1/hw^{(2)}(t)]$ is independent of the choice of the Frobenius lift $\sigma$ in the definition of the Hasse-Witt determinants $hw^{(i)}$. This is because polynomials $hw^{(1)}$ and $hw^{(2)}$ modulo $p$ do not depend on $\sigma$. Moreover, this completion is a subring of $\Z_p\lb t \rb$ because $hw^{(1)}(0), hw^{(2)}(0)
\in \Z_p^\times$.

\begin{theorem}\label{excellent-example2}
Suppose $g(\v x)$ is completely symmetric. Let $p$ be an odd prime that does not divide
$\gamma\times\#\mathcal{G}\times|\Z^n:\Gamma]$, where $\Gamma$ is the lattice spanned by the support of $g(\v x)$
and $\gamma$ the vertex coefficient of $g$. 
Put $f(\v x)=1-t g(\v x)$ and let $CY(g)$ be the corresponding Calabi-Yau crystal as defined in Definition \ref{completely-symmetric}. Assume that the Hasse-Witt determinants $hw^{(1)}(t)$ and $hw^{(2)}(t)$ are invertible in the base ring $R \subseteq \Z_p \lb t \rb$. 
Then there exists a unique Frobenius lift $\sigma$ on $R$ such that
\be{congruence-mod-fil2}
\cartier\left(\frac{1}{f(\v x)}\right)\is \frac{F(t)}{F(t^\sigma)}\frac{1}{f^\sigma(\v x)}\mod{p^2\fil_2^\sigma}.
\ee
Moreover, for this Frobenius lift the element $t^\sigma$ belongs to the $p$-adic completion of the ring $\Z_p[t, 1/hw^{(1)}(t),1/hw^{(2)}(t)]$.

\end{theorem}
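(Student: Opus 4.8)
The plan is to build the excellent lift by Dwork-style successive approximation, extracting both its uniqueness and the analyticity of $t^\sigma$ from the construction. Write $\theta=t\frac{d}{dt}$, $\omega_1=1/f$ and $\omega_2=\theta(1/f)=tg/f^2$; one first checks that $\theta$ maps $M=CY(g)$ into itself and preserves $\fil_2 M$. By the hypothesis $hw^{(1)}(t),hw^{(2)}(t)\in R^\times$ and Corollary~\ref{main-theorem-alt}, for any Frobenius lift $\sigma$ with $t^\sigma\in t^pR$ we have $M^\sigma\cong M^\sigma(2)\oplus\fil_2 M^\sigma$ with $M^\sigma(2)$ free on $\omega_1^\sigma,\omega_2^\sigma$, and by Corollary~\ref{decompose-cartier-image} there are unique $\beta_1(\sigma),\beta_2(\sigma)\in R$ with
\[
\cartier\!\Big(\tfrac1f\Big)\is\beta_1(\sigma)\,\omega_1^\sigma+\beta_2(\sigma)\,\omega_2^\sigma\mod{p^2\fil_2 M^\sigma}.
\]
Reading off the coefficient of $\v x^{\v 0}$ in the formal expansions at $\v 0$ — a functional that kills $\fil_2 M^\sigma$ — gives the \emph{exact} identity $F(t)=\beta_1(\sigma)\,F(t^\sigma)+\beta_2(\sigma)\,(\theta F)(t^\sigma)$ in $R$. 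Since $F(t^\sigma)\in R^\times$, congruence~\eqref{congruence-mod-fil2} holds for $\sigma$ precisely when $\beta_2(\sigma)=0$, and then automatically $\beta_1(\sigma)=F(t)/F(t^\sigma)$. So everything reduces to producing a \emph{unique} lift $\sigma$ with $t^\sigma\in t^pR$ and $\beta_2(\sigma)=0$.

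The construction rests on two facts. First, $\beta_2(\sigma)\in pR$ for every such $\sigma$: the $k=1$ case of Theorem~\ref{free-quotient-k} and Corollary~\ref{decompose-cartier-image} give $M^\sigma\cong M^\sigma(1)\oplus\fil_1 M^\sigma$ and $\cartier(M)\subset M^\sigma(1)+p\fil_1 M^\sigma$, and in the splitting $\fil_1 M^\sigma=\fil_1 M^\sigma(2)\oplus\fil_2 M^\sigma$ of~\eqref{fil-split-0} the summand $\fil_1 M^\sigma(2)$ is free of rank $1$, generated by the $\fil_1 M^\sigma$-component $\omega'$ of $\omega_2^\sigma$ (as $\omega_1^\sigma,\omega_2^\sigma$ is a basis of $M^\sigma(2)$ with $\omega_1^\sigma$ spanning $M^\sigma(1)$); substituting this into the displayed congruence forces $\beta_2(\sigma)\,\omega'\in p\fil_1 M^\sigma$, whence $\beta_2(\sigma)\in pR$. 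Second, a change-of-lift formula: if $t^{\sigma'}=t^\sigma e^{\ell}$ with $\ell=\log(t^{\sigma'}/t^\sigma)\in p^NR$, $N\ge1$, then $M^{\sigma'}=M^\sigma$ as submodules of the formal power series ring — the generators of each re-expand into convergent series in the other through $h(te^\ell)=\sum_{r\ge0}\frac{\ell^r}{r!}(\theta^r h)(t)$ — so $\fil_2 M^{\sigma'}=\fil_2 M^\sigma$ and the remainder $\cartier(1/f)-\beta_1(\sigma')\omega_1^{\sigma'}-\beta_2(\sigma')\omega_2^{\sigma'}$ lies in $p^2\fil_2 M^\sigma$. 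Expanding $\omega_1^{\sigma'},\omega_2^{\sigma'}$ modulo $\fil_2 M^\sigma$ in the basis $\omega_1^\sigma,\omega_2^\sigma$ (using $\ell^r/r!\in p^{2N}R$ for $r\ge2$, where $p$ odd enters) and matching the two $M^\sigma(2)\oplus\fil_2 M^\sigma$ decompositions of $\cartier(1/f)$ yields $\beta_2(\sigma)=\beta_1(\sigma')(\ell+(\text{elt of }p^{2N}R))+\beta_2(\sigma')(1+(\text{elt of }p^NR))$; since $\beta_2(\sigma')\in pR$ and $\beta_1(\sigma')\in R^\times$ this reads $\beta_2(\sigma)\is\beta_2(\sigma')+\beta_1(\sigma')\,\ell\mod{p^{N+1}}$.

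Now iterate. Let $\sigma_1$ be $t\mapsto t^p$, so $\beta_2(\sigma_1)\in pR$. Given $\sigma_j$ with $t^{\sigma_j}\in t^pR$ and $\beta_2(\sigma_j)\in p^jR$, set $\ell_j=-\beta_2(\sigma_j)/\beta_1(\sigma_j)\in p^jR$ and let $\sigma_{j+1}$ be $t\mapsto t^{\sigma_j}e^{\ell_j}$; the change-of-lift formula with $N=j$ gives $\beta_2(\sigma_{j+1})\is\beta_2(\sigma_j)+\beta_1(\sigma_j)\ell_j\is0\mod{p^{j+1}}$. As $t^{\sigma_{j+1}}\is t^{\sigma_j}\mod{p^j}$, the $t^{\sigma_j}$ converge to some $t^\sigma\in t^pR$ defining a Frobenius lift $\sigma$, and since $\beta_2$ depends $p$-adically continuously on $t^\sigma$ (as the change-of-lift formula itself shows) we get $\beta_2(\sigma)=\lim_j\beta_2(\sigma_j)=0$, i.e.\ \eqref{congruence-mod-fil2}. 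For uniqueness: if $\sigma,\sigma'$ both have $\beta_2=0$ and $\ell=\log(t^\sigma/t^{\sigma'})\ne0$, choose $N$ with $\ell\in p^NR\setminus p^{N+1}R$; then the formula gives $\beta_1(\sigma')\,\ell\in p^{N+1}R$, contradicting $\beta_1(\sigma')\in R^\times$. Hence $t^\sigma=t^{\sigma'}$.

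For the ``moreover'', repeat the construction over $R_0$, the $p$-adic completion of $\Z_p[t,1/hw^{(1)}(t),1/hw^{(2)}(t)]$, a subring of $\Z_p\lb t\rb$ stable under every lift with $t^\sigma\in t^pR$ because $hw^{(1)},hw^{(2)}$ are, modulo $p$, lift-independent polynomials with constant term in $\Z_p^\times$. Over $R_0$ the decomposition of Corollary~\ref{main-theorem-alt} still holds, so $\beta_1(\sigma_j),\beta_2(\sigma_j)\in R_0$, hence $\ell_j\in R_0$ and $t^{\sigma_{j+1}}=t^{\sigma_j}e^{\ell_j}\in R_0$; therefore $t^\sigma=\lim_j t^{\sigma_j}\in R_0$. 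The step I expect to be the main obstacle is the change-of-lift formula: one must check carefully that the two Dwork crystals $M^\sigma$ and $M^{\sigma'}$ and their submodules $\fil_2$ really coincide when $t^{\sigma'}/t^\sigma\in 1+pR$, that the tails of the dilation series and the re-expansion of the $p^2\fil_2$-remainder stay inside $\fil_2 M^\sigma$ (which rests on the intrinsic, lift-independent description of $\fil_2$ via divisibility of Laurent coefficients, the Calabi--Yau analogue of Corollary~\ref{formal-2-fil-k}), and that $\beta_i(\sigma)$ is genuinely $p$-adically continuous in $t^\sigma$, so that the limiting identity $\beta_2(\sigma)=0$ is legitimate.
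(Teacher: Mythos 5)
Your strategy is essentially viable, but it is a genuinely different route from the paper's. The paper fixes a single lift $t^\sigma=t^pv$, $v\in1+pR$, and determines the Cartier matrix in closed form: Proposition \ref{cartier-on-example} gives the shape of $\cartier(1/f)$ mod $p^2\fil_2^\sigma$ and the values at $t=0$, Proposition \ref{Cartier-connection-relation} together with the solutions $F$ and $F\log t+G$ of the equation in Proposition \ref{second-order-de} forces $\Lambda=Y\Lambda_0(Y^\sigma)^{-1}$ with constant $\Lambda_0$, which yields the explicit formula \eqref{lambda1-formula} for $\lambda_1$; the Dieudonn\'e--Dwork lemma then gives $p$-integrality of $q=t\exp(G/F)$, the excellent lift is $q^\sigma=\gamma^{p-1}q^p$, existence and uniqueness are read off from Lemma \ref{lambda-1-via-q}, and analyticity of $t^\sigma$ is Theorem \ref{excellent-is-analytic} (series reversion). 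You instead run a Newton-type iteration directly on $\beta_2(\sigma)$, driven by a change-of-lift (re-expansion) formula, so existence, uniqueness and analyticity all come out of the iteration and you bypass the Picard--Fuchs/Wronskian/Dieudonn\'e--Dwork machinery; the price is that you obtain no explicit description of the excellent lift (no $q^\sigma=\gamma^{p-1}q^p$, no formula for $\lambda_1$), which the paper exploits afterwards. Your reduction to $\beta_2(\sigma)=0$ via the constant-coefficient functional, the divisibility $\beta_2(\sigma)\in pR$, and the descent of the whole construction to $R_0=\widehat{\Z_p[t,1/hw^{(1)},1/hw^{(2)}]}$ are fine; do record that $\beta_1(\sigma)\in R^\times$ (it is $\is hw^{(1)}\bmod p$), since both the iteration and the uniqueness argument use it.

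Two items still need attention. First, a sign/orientation slip in the iteration: with your own formula ($t^{\sigma'}=t^\sigma e^{\ell}$ gives $\beta_2(\sigma)\is\beta_2(\sigma')+\beta_1(\sigma')\ell\ \mod{p^{N+1}}$), taking $\sigma=\sigma_j$, $\sigma'=\sigma_{j+1}$ yields $\beta_2(\sigma_{j+1})\is\beta_2(\sigma_j)-\beta_1(\sigma_{j+1})\ell_j$, so the correct choice is $\ell_j=+\beta_2(\sigma_j)/\beta_1(\sigma_j)$ (and one uses $\beta_1(\sigma_{j+1})\is\beta_1(\sigma_j)\ \mod p$); as written, $\ell_j=-\beta_2/\beta_1$ does not kill the leading term, since your displayed recursion applies the formula with the roles of $\sigma_j$ and $\sigma_{j+1}$ swapped. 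This is harmless but should be fixed. Second, and more substantively, the foundation of the change-of-lift formula --- that $M^{\sigma'}=M^\sigma$ inside the module of formal expansions and that $\fil_2M^{\sigma'}=\fil_2M^\sigma$ --- is not available in the paper for Calabi--Yau crystals: Section \ref{higher-derivatives} states only one direction of the coefficient criterion for expansions at $\v 0$, and Corollary \ref{formal-2-fil-k} is proved for vertex expansions, where for $f=1-tg$ the vertex coefficient is $-t\gamma$, forcing a localization at $t$. You rightly flag this as the main obstacle; it is provable (the re-expansion of $1/f^{\sigma'}$ converges $p$-adically in $M^\sigma$ because $\ord_p(m!)<m$, and the intrinsic description of $\fil_2$ follows from Katz's lemma over $R[1/t]$ plus saturation of the crystal in the formal series module), but until that lemma is written out your proof is incomplete at exactly this point. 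The paper never needs such a comparison of crystals for two different lifts, because there the dependence on the lift enters only through $\log(t^\sigma/t^p)$ in the closed formula for $\lambda_1$.
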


The special Frobenius lift in Theorem \ref{excellent-example2} is called the
{\it excellent Frobenius lift}. In what follows we will be able to describe it explicitly.
\begin{remark}
Note that a form of Theorem \ref{excellent-example2} in the context of the Legendre family 
of elliptic curves and in a very different language also occurs in 
Dwork's $p$-adic cycles, \cite[Thm 8.1]{dwork69}.
\end{remark}

In \cite[(7)]{BeVl20II} we give the following consequence of \eqref{congruence-mod-fil1}.
For every $m,s\ge1$ we have
\be{congruence-mod-ps}
\frac{F(t)}{F(t^\sigma)}\is \frac{F_{mp^s}(t)}{F_{mp^{s-1}}(t^\sigma)}\mod{p^s}.
\ee
Here $F_N(t)=\sum_{n=0}^{N-1}f_nt^n$ is the truncation of $F(t)$ at $t^N$. Our second main
objective was to show that \eqref{congruence-mod-ps} holds modulo $p^{2s}$ when 
$t\mapsto t^\sigma$
is an excellent Frobenius lift. Unfortunately we were unable to prove this for any example of $g$.
Numerical experiment seems to support the following conjecture.

\begin{conjecture}\label{congruence-mod-p2s}
Congruence \eqref{congruence-mod-ps} holds modulo $p^{2s}$ for $m=2$ in the hypercubic and hyper-octahedral families
and $m=n+1$ in the simplicial families.
\end{conjecture}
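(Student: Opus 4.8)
The plan is to reduce the congruence \eqref{congruence-mod-fil2} to a single scalar equation on the lift $\sigma$ and then to produce its unique solution by a $p$-adic successive‑approximation argument, following the pattern of the simple example in Section~\ref{sec:example1} and of Dwork's treatment of the Legendre family.

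\emph{Step 1 (reduction to one scalar).} The two lemmas above show that $hw^{(1)}(t),hw^{(2)}(t)\in R^\times$, and since modulo $p$ these are polynomials independent of $\sigma$, the same holds for every admissible Frobenius lift; hence Corollary~\ref{main-theorem-alt} gives $CY(g)\cong CY(g)(2)\oplus\fil_2 CY(g)$ and likewise for all twists $CY(g)^{\sigma^i}$, so $Q^{(2)}:=CY(g)/\fil_2$ is free of rank $2$ with extended basis $e_1=1/f$, $e_2=\theta(1/f)=tg/f^2$. By Corollary~\ref{decompose-cartier-image} there are unique $\lambda_1,\lambda_2\in R$ with $\cartier(1/f)\is\lambda_1 e_1^\sigma+\lambda_2 e_2^\sigma\mod{p^2\fil_2^\sigma}$; reading the coefficient of $\v x^{\v 0}$ in the expansion at $\v 0$ gives $F(t)=\lambda_1 F(t^\sigma)+\lambda_2(\theta F)(t^\sigma)$. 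Setting $\omega_0:=e_2-\tfrac{\theta F}{F}e_1$, which lies in $\fil_1 CY(g)$ and generates the free rank‑one module $\fil_1 Q^{(2)}=\ker(Q^{(2)}\to Q^{(1)})$, and comparing with \eqref{congruence-mod-fil1}, one finds $\cartier(1/f)\is\tfrac{F(t)}{F(t^\sigma)}e_1^\sigma+\lambda_2\,\omega_0^\sigma\mod{p^2\fil_2^\sigma}$. Thus \eqref{congruence-mod-fil2} holds if and only if $\lambda_2=0$; since $\cartier$ is $R$‑linear, this is equivalent to $\cartier(\eta)\is\eta^\sigma\mod{p^2\fil_2^\sigma}$, where $\eta:=\tfrac1{F(t)f(\v x)}$ and $\eta^\sigma=\tfrac1{F(t^\sigma)f^\sigma(\v x)}$. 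Note $\theta\eta=\tfrac1F\omega_0\in\fil_1 CY(g)$, so $R\eta$ is the canonical, $\sigma$‑independent, $\theta$‑normalised lift of $Q^{(1)}$.

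\emph{Step 2 (iterated supercongruences and uniqueness).} If $\lambda_2=0$ then, because $\cartier(\fil_2)\subseteq p^2\fil_2^\sigma$ (Corollary~\ref{cartier-div-by-k-on-fil-k}), the relation propagates to $\cartier^s(\eta)\is\eta^{\sigma^s}\mod{p^{2s}\fil_2^{\sigma^s}}$ for all $s$. Writing $\eta=\sum_\v u\phi_\v u(t)\v x^\v u$ with $\phi_\v u\in\Z_p\lb t\rb$, Proposition~\ref{expansion-coeffs-k} applied to $\eta$ gives $\phi_{p^s\v u}(t)\is\phi_\v u(t^{\sigma^s})\mod{p^{2s}\gcd(u_1,\dots,u_n)^2}$. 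Taking $\v u=\v b$ a vertex of $\Delta$, whose coordinate gcd is a $p$‑adic unit, and using $\phi_\v b(t)=\gamma t+O(t^2)$ with $\gamma\in\Z_p^\times$ the vertex coefficient of $g$ (invertible by hypothesis), one obtains $\phi_\v b(t^{\sigma^s})\is\phi_{p^s\v b}(t)\mod{p^{2s}}$. Since $\gamma^{-1}\phi_\v b$ is an invertible formal substitution, these pin down $t^{\sigma^s}\bmod p^{2s}$; a short computation with the $\sigma^s$‑twists then forces any two lifts satisfying \eqref{congruence-mod-fil2} to coincide, which is the uniqueness statement.

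\emph{Step 3 (existence and analyticity).} For every admissible lift one has $\lambda_2\in pR$ (reduce the mod‑$p^2\fil_2^\sigma$ congruence modulo $p\fil_1^\sigma$ and use that $\omega_0^\sigma$ generates $\fil_1 Q^{(2)\sigma}$, together with \eqref{congruence-mod-fil1}). Starting from $\sigma_0:t\mapsto t^p$, I construct $t^\sigma$ by successive approximation: given $\tau_N\in\Z_p[t,1/(hw^{(1)}hw^{(2)})]$ with $\tau_N/t^p\in1+pR$ and $\lambda_2(\tau_N)\in p^NR$, I set $\tau_{N+1}=\tau_N+p^N t^p\delta_N$ and use the Frobenius‑structure identity of Proposition~\ref{Cartier-connection-relation} — which controls how $\lambda_2$ varies with the lift in terms of $\lambda_2$, $t^\sigma$ and the Hasse–Witt data — together with the vertex congruence of Step 2 to choose $\delta_N$ lowering the $p$‑adic order of $\lambda_2$; the correction requires division only by $hw^{(1)},hw^{(2)}$, so the $\tau_N$ remain rational with those denominators and form a $p$‑adically Cauchy sequence. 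Its limit $t^\sigma$ satisfies $\lambda_2=0$, i.e.\ \eqref{congruence-mod-fil2}, and lies in the $p$‑adic completion of $\Z_p[t,1/hw^{(1)},1/hw^{(2)}]$, which is the analogue of Deligne's theorem developed in full in Theorem~\ref{excellent-is-analytic}.

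\emph{Main difficulty.} The crux is Step~3: one must organise the successive‑approximation scheme so that at each stage the correction $\delta_N$ genuinely lowers the $p$‑adic order of $\lambda_2$ while confining the denominators to $hw^{(1)},hw^{(2)}$, and so that the resulting sequence converges; quantifying the dependence of $\lambda_2$ on the lift via Proposition~\ref{Cartier-connection-relation} and bookkeeping the $\sigma^s$‑twists in Step~2 is where essentially all the work lies, the remainder being a direct application of Theorem~\ref{free-quotient-k} and the Hasse–Witt formalism of Section~\ref{sec:HW-matrices}.
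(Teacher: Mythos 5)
There is a fundamental mismatch between what you prove and what the statement asserts. The statement is Conjecture \ref{congruence-mod-p2s}: the \emph{truncated} congruence \eqref{congruence-mod-ps}, i.e.
\begin{equation*}
\frac{F(t)}{F(t^\sigma)}\;\equiv\;\frac{F_{mp^s}(t)}{F_{mp^{s-1}}(t^\sigma)}\pmod{p^{2s}},
\end{equation*}
for the specific truncation lengths $m=2$ (hypercubic, hyperoctahedral) and $m=n+1$ (simplicial), presumably with the excellent lift. This is an open conjecture: the authors state explicitly that they were unable to prove it for any example of $g$, and they offer only numerical evidence. Your three steps never mention the truncated sums $F_{mp^s}(t)$, the choice of $m$, or why $m=2$ versus $m=n+1$ should matter; what they sketch instead is the existence, uniqueness and analyticity of the excellent Frobenius lift satisfying \eqref{congruence-mod-fil2}, i.e.\ Theorems \ref{excellent-example2} and \ref{excellent-is-analytic}, which the paper already proves and which the authors explicitly say do \emph{not} yield the mod $p^{2s}$ version of \eqref{congruence-mod-ps}. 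The missing idea is precisely the passage from the crystal-level statement \eqref{congruence-mod-fil2} (equivalently, from the coefficientwise supercongruences of Proposition \ref{expansion-coeffs-k}, which control coefficients at exponents $p^s\v u$) to a congruence involving the truncations $F_{mp^s}$; for $k=1$ this passage is carried out in Part II by a separate argument, and its $k=2$ analogue is exactly the obstruction that makes the statement a conjecture. Asserting $\lambda_2=0$ and iterating $\cartier$ does not close this gap.

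A secondary point: even as a proof of Theorem \ref{excellent-example2}, your Step 3 is not a workable argument as written. The paper does not obtain the excellent lift by a successive-approximation scheme driven by Proposition \ref{Cartier-connection-relation}; it solves the Frobenius-structure equation explicitly via the solutions $F(t)$ and $F(t)\log t+G(t)$ of the second-order equation of Proposition \ref{second-order-de}, derives the closed formula \eqref{lambda1-formula} for $\lambda_1$ (your $\lambda_2$), shows via the Dieudonn\'e--Dwork lemma that the canonical coordinate $q(t)=t\exp(G(t)/F(t))$ is $p$-integral, defines the excellent lift by $q^\sigma=\gamma^{p-1}q^p$, and proves analyticity of $t^\sigma$ by an implicit-equation/series-reversion argument (Theorem \ref{excellent-is-analytic} and Lemma \ref{inverse-series}). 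Your proposed iteration gives no mechanism guaranteeing that a correction $\delta_N$ lowering the order of $\lambda_2$ exists with denominators confined to $hw^{(1)},hw^{(2)}$, so even that part would need to be replaced by the paper's explicit construction. In short: the proposal, even if its sketch were completed, proves a different (already proved) theorem and leaves the conjectured supercongruence untouched.
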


The rest of this section is dedicated to the proof of Theorem~\ref{excellent-example2}. We work under the assumptions of the theorem. We fix a Frobenius lift $\sigma: R \to R$ such that $t^\sigma = t^p v$ with $v \in 1 + p R$. Since the first and second Hasse-Witt determinants for $CY(g)$ are invertible in the base ring $R$, by Corollary~\ref{main-theorem-alt} we have the decomposition 
\be{CY-level-2-decomp}
CY(g) \cong CY(g)(2) \oplus \fil_2.
\ee
The level~2 part $CY(g)(2)$ is a free $R$-module with the basis $1/f$ and $1/f^2$. Consider the derivation $\theta = t \frac{d}{dt}$. It will be convenient for us to work in the basis $1/f$ and $\theta(1/f) = tg(\v x)/f^2(\v x)= 1/f^2-1/f$.

\begin{proposition}\label{second-order-de}
There exist $A,B\in R$ be such that $\theta^2(1/f)\is A(t)(1/f)+B(t)\theta(1/f)\mod{\fil_2}$.
Moreover, $A(0)=B(0)=0$. 
\end{proposition}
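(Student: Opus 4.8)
The plan is to obtain the existence of $A,B$ from the rank-two decomposition \eqref{CY-level-2-decomp}, and then to reduce the statement $A(0)=B(0)=0$ to the vanishing of a single residue at $t=0$ on a rank-one quotient.

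\emph{Existence.} First note that $\theta=t\,d/dt$ is a derivation of $R$ (clearly $\theta(t)\in R$, and $\theta(1/hw^{(i)})=-\theta(hw^{(i)})/(hw^{(i)})^{2}\in R$), so it extends to $\Omega_f$; it preserves $CY(g)$, since $\theta\bigl((m-1)!A/f^{m}\bigr)=(m-1)!\,\theta(A)/f^{m}+m!\,A\,(tg)/f^{m+1}$ and both $\theta(A)$ and $A(tg)$ are again admissible and $\mathcal G$-invariant, with support in $m\Delta^\circ$ and $(m+1)\Delta^\circ$ respectively. Because $\theta$ commutes with $\cartier$, it maps $\fil_1$ into $\fil_1$ and $\fil_2$ into $\fil_2$, hence descends to $CY(g)/\fil_2$. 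Now $\theta^{2}(1/f)=\theta(tg/f^{2})=tg/f^{2}+2t^{2}g^{2}/f^{3}$ lies in $CY(g)(2)+CY(g)(3)\subseteq CY(g)$ (here $2t^{2}g^{2}$ is admissible and $\mathcal G$-invariant with support in $2\Delta\subseteq 3\Delta^\circ$). By \eqref{CY-level-2-decomp} its class modulo $\fil_2$ lies in $CY(g)(2)$, which has basis $1/f,\theta(1/f)$, so there are unique $A,B\in R$ with $\theta^{2}(1/f)\equiv A\,(1/f)+B\,\theta(1/f)\pmod{\fil_2}$.

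\emph{Reduction to a residue.} Let $F(t)=\sum_{n\ge0}g_nt^n$ be the coefficient of $\v x^{\v 0}$ in the expansion of $1/f$ at $\v 0$, so $F(0)=g_0=1$. By the case $k=1$ of Corollary \ref{main-theorem-alt}, $\theta$ acts on $CY(g)/\fil_1\cong R\,(1/f)$ by multiplication by some $\mu\in R$; reading off the coefficient of $\v x^{\v 0}$ (which vanishes on $\fil_1$) gives $\mu=\theta F/F$, whence $\mu(0)=0$ and $(\theta\mu)(0)=0$. Put $\eta_0:=\theta(1/f)-\mu\,(1/f)\in CY(g)(2)$. Since $\theta(1/f)\equiv\mu\,(1/f)\pmod{\fil_1}$ we have $\eta_0\in\fil_1\cap CY(g)(2)$, and under the splitting $CY(g)(2)=CY(g)(1)\oplus\bigl(\fil_1\cap CY(g)(2)\bigr)$ the element $\eta_0$ maps to the class of the basis vector $\theta(1/f)$; hence $\eta_0$ generates the rank-one free module $\fil_1\cap CY(g)(2)$, and therefore also $\fil_1/\fil_2$ by \eqref{fil-split-0}. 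Consequently $\theta$ acts on $\fil_1/\fil_2$ by some $c\in R$, i.e. $\theta\eta_0\equiv c\,\eta_0\pmod{\fil_2}$. Substituting $\theta(1/f)=\mu\,(1/f)+\eta_0$ and differentiating once more,
\[
\theta^{2}(1/f)=(\theta\mu+\mu^{2})(1/f)+\mu\,\eta_0+\theta\eta_0\equiv(\theta\mu-\mu c)(1/f)+(\mu+c)\,\theta(1/f)\pmod{\fil_2},
\]
so that $A=\theta\mu-\mu c$ and $B=\mu+c$. Using $\mu(0)=(\theta\mu)(0)=0$ this already yields $A(0)=0$, and reduces the remaining assertion to $c(0)=0$.

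\emph{Vanishing of the residue.} It thus suffices to show that $\theta$ acts with zero residue at $t=0$ on the rank-one crystal $\fil_1/\fil_2$. I would attempt this by a $t$-adic analysis of expansions at $\v 0$: the element $\eta_0$ has $t$-adic valuation $1$ with leading coefficient $\gamma\sum_{\v v}\v x^{\v v}$ (sum over the vertices of $\Delta$, $\gamma$ the common vertex coefficient of $g$), one checks that $\theta\eta_0$ has the same $t^{1}$-part, and every element of $\fil_2$ is $\mathcal G$-invariant with vanishing constant term; comparing the $t^{1}$-parts of $\theta\eta_0$ and $c\,\eta_0$ modulo $\fil_2$, the value $c(0)$ gets measured against the $t^{1}$-part of a $\fil_2$-element, which in turn must be pinned down using the $\gcd$-divisibility of the higher expansion coefficients of elements of $\fil_2$. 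Equivalently one must show that the matrix $N_\theta(0)$ with rows $(0,1)$ and $(A(0),B(0))$ is nilpotent — exactly the statement that the monodromy of the family at $t=0$ is a single unipotent Jordan block. I expect this last step to be the main obstacle. An alternative route is to exploit the Frobenius structure (Proposition \ref{Cartier-connection-relation}): modulo $\fil_1$ the Cartier matrix equals $F(t)/F(t^\sigma)$, which is $1$ at $t=0$, and evaluating the relation $N_\theta\Lambda=\Lambda N_\theta^{\sigma}+\theta(\Lambda)$ at $t=0$, where $\theta(\Lambda)$ vanishes, should force $N_\theta(0)$ to be nilpotent, hence $A(0)=B(0)=0$.
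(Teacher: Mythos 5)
Your existence argument and the reduction $A=\theta\mu-\mu c$, $B=\mu+c$ with $\mu=\theta F/F$ are sound (and indeed give $A(0)=0$), but the proof of $B(0)=0$ is missing: you reduce it to $c(0)=0$, explicitly call this ``the main obstacle'', and offer only two sketches, neither of which is carried out. The first sketch (comparing $t^{1}$-parts of $\theta\eta_0$ and $c\,\eta_0$ modulo $\fil_2$) does not work as stated: an element of $\fil_2$ is only constrained by $c_{\v u}\in \gcd(u_1,\ldots,u_n)^2\Z_p\lb t\rb$, and at a vertex monomial $\v x^{\v v}$ the gcd is typically $1$, so $\fil_2$-elements can have arbitrary nonzero coefficients there (e.g.\ $\theta_{x_1}\theta_{x_2}(1/f)$ has $t^{1}$-part $\gamma\sum_{\v v}v_1v_2\v x^{\v v}$); hence the $t^{1}$-comparison alone pins down nothing. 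The divisibility only bites at monomials $\v x^{p^s\v v}$, where it gives $p^{2s}$ --- and that is exactly how the paper argues, bypassing your reduction entirely: expanding $1/f$ at $\v 0$, the coefficient $a_Q(t)$ of $\v x^{Q\v v}$ equals $\gamma^Q t^Q(1+O(t))$; taking $Q=p^s$, the relation of Proposition~\ref{second-order-de} read off at $\v x^{p^s\v v}$ gives $\theta^2 a_{p^s}\is A\,a_{p^s}+B\,\theta a_{p^s}\mod{p^{2s}}$, and the coefficient of $t^{p^s}$ yields $0\is A(0)+B(0)p^s\mod{p^{2s}}$ for every $s$, forcing $A(0)=B(0)=0$.

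Your second sketch (the Frobenius structure) can in fact be completed, but the decisive steps are absent from your write-up. Evaluating $N_\theta\Lambda=\Lambda N_\theta^\sigma+\theta(\Lambda)$ of Proposition~\ref{Cartier-connection-relation} at $t=0$ gives $N_\theta(0)\Lambda(0)=p\,\Lambda(0)N_\theta(0)$, using $\theta(\Lambda)|_{t=0}=0$ and $(\theta(t^\sigma)/t^\sigma)(0)=p$, $t^\sigma(0)=0$; one then needs (a) that $\Lambda(0)$ is invertible over $\Q_p$, which follows from $\Lambda\is HW^{(2)}\mod{p^2}$ (see \eqref{HW-action}) and $\det HW^{(2)}=p\,hw^{(2)}$ with $hw^{(2)}\in R^\times$, and (b) the conjugation argument: $N_\theta(0)$ is conjugate to $pN_\theta(0)$, so its eigenvalues are stable under multiplication by $p$, hence all zero, hence $N_\theta(0)=\begin{pmatrix}0&1\\A(0)&B(0)\end{pmatrix}$ is nilpotent, i.e.\ $B(0)=\mathrm{trace}=0$ and $A(0)=-\det=0$. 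None of (a), (b), nor the justification that $N_\theta^\sigma(0)=pN_\theta(0)$, appears in your proposal, so as it stands the key assertion $B(0)=0$ remains unproved.
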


\begin{proof}
The first statement is an immediate consequence of 
the decomposition~\eqref{CY-level-2-decomp}. 
To prove the second statement we expand $1/f$ in a Laurent series in $x_1,\ldots,x_n$ by
\[
\frac{1}{f}=\sum_{k\ge0}t^kg(\v x)^k
\]
and then termwise expanion of the powers $g^k$. This will give a Laurent series with support
in $\Gamma$, the lattice generated by the support of $g$.
However, the series converges $t$-adically.
Choose a vertex $\v v$ of $\Delta$ and let $\gamma$ be the coefficient of $\v x^{\v v}$ in $g$.
By assumption we have $\gamma\in\Z_p^\times$. 

Let $Q$ be any positive integer and $a_Q(t)$ the coefficient of $\v x^{Q\v v}$
in the Laurent series
for $1/f$. Clearly $a_Q(t)$ is a power series in $t$. We also see that its lowest
degree term comes
from the term $t^Qg(\v x)^Q$. Its contribution to $a_Q(t)$ is given by the coefficient of
$\v x^{Q\v v}$
in $t^Qg^Q$, which is $\gamma^Qt^Q$. Hence $a_Q(t)=\gamma^Qt^Q(1+O(t))$.
Take $Q=p^s$ for any $s\ge1$.
Then $\theta^2(1/f)\is A(t)/f+B(t)\theta(1/f)\mod{\fil_2}$ implies that
\[
\theta^2a_{p^s}(t)\is A(t)a_{p^s}(t)+B(t)\theta a_{p^s}(t)\mod{p^{2s}}.
\]
Take the coefficient of $t^{p^s}$ on both sides. Then we get $0\is A(0)+B(0)p^s\mod{p^{2s}}$.
So $A(0)\is0\mod{p^s}$ for all $s\ge1$, hence $A(0)= 0$. Using this we also conclude that
$B(0)\is0\mod{p^s}$ for all $s\ge1$, hence $B(0)=0$.
\end{proof}

Surprisingly, the power series expansions of $A(t),B(t)$ are independent of the
choice of $p$. This is shown
in Theorem \ref{second-order-solutions} by displaying two independent solutions of the
differential equation $\theta^2y-B(t)\theta y-A(t)y=0$ which do not depend on $p$. As a result, 
$A(t),B(t)$ are also independent of $p$.
One solution is the power series $F(t)$ that we introduced earlier.
The other solution has the form
$F(t)\log t+G(t)$, where $G(t)$ is a power series in $t$ and $G(0)=0$. 

From Theorem \ref{second-order-solutions} we find that we must determine the lattice of integer
relations $\ell_1\v v_1+\cdots+\ell_N\v v_N=\v 0$, where $\v v_1,\ldots,\v v_N$ are the vertices
of $\Delta$. We list the results for our four families here.
\smallskip

{\bf Simplicial family}. We have $n+1$ vertices and a rank one lattice of relations
generated by $(1,1,\ldots,1)$. It immediately follows from Theorem \ref{second-order-solutions} that
\[
F(t)=\sum_{k\ge0}\frac{((n+1)k)!}{(k!)^{n+1}} t^{(n+1)k}
\]
and
\[
G(t)=\sum_{k\ge1}\frac{((n+1)k)!}{(k!)^{n+1}}
\times\sum_{j=k+1}^{(n+1)k}\frac{1}{j}\times t^{(n+1)k}.
\]
\smallskip

{\bf Hypercubic family}.
This is not so easy to deduce from Theorem \ref{second-order-solutions}. However
we can a take short cut in the proof of Theorem \ref{second-order-solutions}.
Consider the expansion
\[
\frac{1}{f}=\sum_{k\ge0}t^k\prod_{i=1}^n\left(x_i+\frac{1}{x_i}\right)^k
\]
and determine the coefficient of $(x_1\cdots x_n)^Q$ for any non-negative integer $Q$. A straightforward
calculation gives us
\[
\sum_{k\ge0,k\is Q\mod2}\binom{k}{\frac{k+Q}{2}}^nt^k.
\]
Replace $k$ by $2k+Q$. We get
\[
\sum_{k\ge0}\binom{2k+Q}{k}^nt^{Q+2k}.
\]
The proof of Theorem \ref{second-order-solutions} tells us that we get $F(t)$ by setting $Q=0$. Hence
\[
F(t)=\sum_{k\ge0}\left(\frac{(2k)!}{(k!)^2}\right)^n t^{2k}.
\]
We get $G(t)$ by taking the derivative with respect to $Q$ and then setting $Q=0$. Hence
\[
G(t)=\sum_{k\ge1}\left(\frac{(2k)!}{(k!)^2}\right)^n
\times\sum_{j=k+1}^{2k}\frac{n}{j}\times t^{2k}.
\]
\smallskip

{\bf Hyperoctahedral family}.
A straightforward computation shows that the coefficient of $x_1^Q$ in the $t$-expansion of $1/f$ equals
\[
\sum_{k\ge0}t^{Q+2k}\sum_{k_1+k_2+\ldots+k_n=k}\frac{(Q+2k)!}{k_1!(k_1+Q)!(k_2!\cdots k_n!)^2}.
\]
Setting $Q=0$ yields
\[
F(t)=\sum_{k\ge0}t^{2k}\sum_{k_1+k_2+\cdots+k_n=k}\frac{(2k)!}{(k_1!k_2!\cdots k_n!)^2}.
\]
Taking the $Q$-derivative and then $Q=0$ yields
\[
G(t)=\sum_{k\ge1}t^{2k}\sum_{k_1+k_2+\cdots+k_n=k}\left(\sum_{j=k_1+1}^{2k}\frac{1}{j}\right)
\frac{(2k)!}{(k_1!k_2!\cdots k_n!)^2}.
\]
\smallskip

{\bf \An-family}. 
A straightforward computation shows that the coefficient of $x_1^Q$ ($(y_1/y_0)^Q$ in the homogeneous
notation) in the $t$-expansion of $1/f$ equals
\[
\sum_{k\ge0}t^{Q+k}\sum_{k_0+k_1+\ldots+k_n=k}\frac{(Q+k)!^2}{k_0!(k_0+Q)!k_1!(k_1+Q)!(k_2!\cdots k_n!)^2}.
\]
Setting $Q=0$ yields
\[
F(t)=\sum_{k\ge0}t^k\sum_{k_0+\cdots+k_n=k}\left(\frac{k!}{k_0!\cdots k_n!}\right)^2.
\]
Taking the $Q$-derivative and then $Q=0$ yields
\[
G(t)=2\sum_{k\ge1}t^k\sum_{k_0+\cdots+k_n=k}\left(\frac{k!}{k_0!\cdots k_n!}\right)^2
\sum_{j=k_0+1}^{k}\frac{1}{j}.
\]
\smallskip

We now turn to the action of $\cartier$ on $CY(g)/\fil_2$. Recall that
$t^\sigma = t^p v$ with $v \in 1 + p R$.

\begin{proposition}\label{cartier-on-example}
Let notations and assumptions be as in Theorem \ref{excellent-example2}. 
Then there exist $\lambda_0(t),\lambda_1(t)\in R$ such that 
\be{cartier-on-rank2}
\cartier(1/f)\is \lambda_0(t)(1/f^\sigma)+\lambda_1(t)\theta(1/f)^\sigma\mod{p^2\fil_2^\sigma}.
\ee
Furthermore, $\lambda_0(0)=1$, $\lambda_1(t)$ is divisible by $p$ and 
$\lambda_1(0)=\log(\gamma^{p-1}/v(0))$.
\end{proposition}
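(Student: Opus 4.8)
The plan is to establish the four assertions in order, treating existence and the normalizations $\lambda_0(0)=1$ and $p\mid\lambda_1$ as routine and reserving the real work for the value of $\lambda_1(0)$. For existence: under the hypotheses of Theorem~\ref{excellent-example2} the first and second Hasse--Witt determinants of $CY(g)$ lie in $R^\times$, and by complete symmetry $\{1/f,\theta(1/f)\}$ is an extended basis of the free $R$-module $CY(g)(2)$, with $CY(g)(1)=R\cdot 1/f$. Hence Corollary~\ref{main-theorem-alt} gives the decomposition $CY(g)\cong CY(g)(2)\oplus\fil_2$ and Corollary~\ref{decompose-cartier-image} gives $\cartier(1/f)\in CY(g)^\sigma(2)+p^2\fil_2 CY(g)^\sigma$; writing the $CY(g)^\sigma(2)$-component in the basis $1/f^\sigma,\theta(1/f)^\sigma$ yields unique $\lambda_0,\lambda_1\in R$ with \eqref{cartier-on-rank2}. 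For $p\mid\lambda_1$ I would reduce \eqref{cartier-on-rank2} modulo $p\,CY(g)^\sigma$: by \eqref{cartier-image-mod-p-k} the Cartier image already lies in $CY(g)^\sigma(1)+p\,CY(g)^\sigma$, so intersecting with the direct summand $CY(g)^\sigma(2)$ forces the coefficient of $\theta(1/f)^\sigma$ to be divisible by $p$ (equivalently, this is the divisibility of the last column of $HW^{(2)}$ established in the proof of Proposition~\ref{hwc-maximal}). For $\lambda_0(0)=1$ I would expand both sides of \eqref{cartier-on-rank2} as formal power series at $\v 0$ and compare coefficients of $\v x^\v 0$: recalling that $\cartier$ acts on $\v 0$-expansions by $\sum c_\v u\v x^\v u\mapsto\sum c_{p\v u}\v x^\v u$, the left side contributes the constant term $F(t)$ of $1/f$, while the right side contributes $\lambda_0(t)F(t^\sigma)+\lambda_1(t)(\theta F)(t^\sigma)$ because elements of $\fil_2^\sigma$ have vanishing $\v x^\v 0$-coefficient (Section~\ref{higher-derivatives}). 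The resulting identity $F(t)=\lambda_0(t)F(t^\sigma)+\lambda_1(t)(\theta F)(t^\sigma)$ in $R$, evaluated at $t=0$ where $F(0)=1$ and both $t^\sigma$ and $\theta F$ vanish, gives $\lambda_0(0)=1$.

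The crux is $\lambda_1(0)$, and here I would imitate the determination of the constant $\beta_4$ in Section~\ref{sec:example1}, using expansion coefficients along a vertex ray. Fix a vertex $\v v$ of $\Delta$, with vertex coefficient $\gamma$, and let $a_Q(t)$ be the coefficient of $\v x^{Q\v v}$ in the $\v 0$-expansion of $1/f$; as in the proof of Proposition~\ref{second-order-de}, $a_Q(t)=\gamma^Qt^Q(1+O(t))$, so the coefficient of $t^Q$ in $a_Q$ is exactly $\gamma^Q$. Applying the supercongruence of Proposition~\ref{expansion-coeffs-k} (in its $\v 0$-expansion form appropriate to CY-crystals) to the first row of \eqref{cartier-on-rank2} with $\v m=\v v$ gives, for every $s\ge1$,
\[
a_{p^s}(t)\is \lambda_0(t)\,a_{p^{s-1}}(t^\sigma)+\lambda_1(t)\,(\theta a_{p^{s-1}})(t^\sigma)\pmod{p^{2s}}.
\]
I would then compare the coefficients of $t^{p^s}$ on the two sides. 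Using $t^\sigma=t^pv$ with $v(0)\in 1+p\Z_p$, the already-proven $\lambda_0(0)=1$, and the fact that $a_{p^{s-1}}$ and $\theta a_{p^{s-1}}$ vanish to order exactly $p^{s-1}$ with leading coefficients $\gamma^{p^{s-1}}$ and $p^{s-1}\gamma^{p^{s-1}}$, one is left with
\[
\gamma^{p^s}\is \gamma^{p^{s-1}}v(0)^{p^{s-1}}\bigl(1+\lambda_1(0)\,p^{s-1}\bigr)\pmod{p^{2s}}.
\]
Writing $w=\gamma^{p-1}/v(0)$, which lies in $1+p\Z_p$ by Fermat's little theorem, and dividing through by the unit $\gamma^{p^{s-1}}v(0)^{p^{s-1}}$, this becomes $w^{p^{s-1}}\equiv 1+\lambda_1(0)\,p^{s-1}\pmod{p^{2s}}$; on the other hand $w^{p^{s-1}}=\exp(p^{s-1}\log w)\equiv 1+p^{s-1}\log w\pmod{p^{2s}}$ since $\log w\in p\Z_p$ and $p$ is odd. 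Comparing gives $\lambda_1(0)\equiv\log w\pmod{p^{s+1}}$ for all $s\ge1$, whence $\lambda_1(0)=\log(\gamma^{p-1}/v(0))$.

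The main obstacle I anticipate is making the coefficient-of-$t^{p^s}$ extraction in the second paragraph fully rigorous: one has to check that the $p^2\fil_2^\sigma$-error in \eqref{cartier-on-rank2} contributes to the $\v x^{p^{s-1}\v v}$-coefficient only modulo $p^{2s}$ (this is where the divisibility of $\fil_2$-coefficients by $\gcd(\cdot)^2$ enters), that the lower-order terms of $\lambda_0(t),\lambda_1(t)$ and the higher-order tails of $a_{p^{s-1}}(t^\sigma)$ and $(\theta a_{p^{s-1}})(t^\sigma)$ only affect strictly higher powers of $t$, and that the resulting infinite family of congruences $\lambda_1(0)\equiv\log w\pmod{p^{s+1}}$ genuinely pins down $\lambda_1(0)$. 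I would also want to double-check that Proposition~\ref{expansion-coeffs-k}, stated there for expansions at a vertex of $\Delta$, transfers without change to the $\v 0$-expansions of CY-crystals used throughout this section.
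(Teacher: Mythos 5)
Your proposal is correct and follows essentially the same route as the paper: existence and $p\mid\lambda_1$ come from the level-2 decomposition and the mod-$p$ image of $\cartier$, and the value $\lambda_1(0)$ is obtained, exactly as in the paper, from the vertex-coefficient congruence $a_{p^s}(t)\equiv\lambda_0(t)\,a_{p^{s-1}}(t^\sigma)+\lambda_1(t)\,(\theta a_{p^{s-1}})(t^\sigma)\ (\mathrm{mod}\ p^{2s})$ (justified, as you anticipated, by the $\gcd^2$-divisibility of coefficients of $\fil_2$-elements) followed by comparison of the $t^{p^s}$-coefficients. The only, cosmetic, difference is that you derive $\lambda_0(0)=1$ from the exact constant-term identity $F(t)=\lambda_0(t)F(t^\sigma)+\lambda_1(t)(\theta F)(t^\sigma)$, whereas the paper reads it off from the same vertex congruence.
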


\begin{proof}
The first statement follows from Corollary \ref{decompose-cartier-image} applied to our particular
situation. From \eqref{cartier-image-mod-p-k} with $k=1$ we conclude that for some
$\lambda \in R$ one has $\cartier(1/f) \is \lambda/f^\sigma \mod p$.
Since $1/f^\sigma$ and $\theta(1/f)^\sigma$ are linearly independent over $R$,
it follows that {$\lambda_0 \equiv \lambda \mod p$ and} $\lambda_1(t)$
is divisible by $p$.
To show that $\lambda_0(0)=1$ and $\lambda_1(0)=0$, consider the same expansion of
$1/f$ at $\v 0$ as in the proof
of Proposition \ref{second-order-de}. We shall again denote by $a_Q(t)$ the coefficient at
$\v x^{Q\v v}$ in this expansion.
Here $\v v$ is a vertex of the Newton polytope of $g$. In that proof we saw that 
$a_Q(t)=\gamma^{Q} t^Q(1+O(t))$,
where $\gamma\in\Z_p^\times$ is the vertex coefficient of $g(\v x)$. Let $Q=p^s$ for any $s\ge1$.
Expanding at $\v 0$ both sides of~\eqref{cartier-on-rank2} and taking the coefficients at
$\v x^{p^{s-1}\v v}$, we obtain that
\[
a_{p^s}(t)\is \lambda_0(t)a_{p^{s-1}}(t^\sigma)+\lambda_1(t)(\theta a_{p^{s-1}})(t^\sigma)\mod{p^{2s}}.
\]
Taking the coefficient of $t^{p^s}$ on both sides of the last congruence we get

\[
\gamma^{p^s} \is \lambda_0(0)\gamma^{p^{s-1}}v(0)^{p^{s-1}} + \lambda_1(0)p^{s-1}\gamma^{p^{s-1}}v(0)^{p^{s-1}} \mod {p^{2s}}.
\]
We rewrite this congruence as 
\[
\left( \frac{\gamma^{p-1}}{v(0)}\right)^{p^{s-1}} \is \lambda_0(0)+\lambda_1(0)p^{s-1}\mod{p^{2s}}
\]
Since $\gamma^{p-1}/v(0) \is 1 \mod p$, the left-hand side $\is 1 \mod{p^s}$. Therefore $\lambda_0(0)=1$. Subtracting $1$ and dividing by $p^{s-1}$, we find that $\lambda_1(0)=\log(\gamma^{p-1}/v(0))$.
\end{proof}

We like to determine the coefficients $\lambda_0(t),\lambda_1(t)$ of the Cartier
operator and use Proposition \ref{Cartier-connection-relation} in our case,
which has rank $2$. We take $\delta=\theta$ and note that
\[
N_\theta=\begin{pmatrix}0 & 1\\ A(t) & B(t)\end{pmatrix},\quad
N_\theta^\sigma=\frac{\theta(t^\sigma)}{t^\sigma}
\begin{pmatrix}0 & 1\\ A(t^\sigma) & B(t^\sigma)\end{pmatrix}.
\]
The first row of $\Lambda$ is given by $\lambda_0(t),\lambda_1(t)$. 
To determine
the second row we apply $\theta$ to $\cartier(1/f)=\lambda_0(t)/f^\sigma+
\lambda_1(t)\theta(1/f)^\sigma$. We get $\cartier(\theta(1/f))=
\mu_0(t)/f^\sigma+\mu_1(t)\theta(1/f)^\sigma$, where $\mu_0(t),\mu_1(t)\in R$
and $\mu_1(t)$ is divisible by $p$.

Let $Y$ be a fundamental matrix solution of the system $\theta Y=N_\theta Y$. Then
$Y^\sigma =Y(t^\sigma)$ is a fundamental matrix solution of $\theta Y^\sigma=N_\theta^\sigma Y^\sigma$.
Write $\Lambda(t)=Y\Lambda_0(Y^\sigma)^{-1}$ for some unknown matrix $\Lambda_0$ and
substitute this in the differential equation in Proposition \ref{Cartier-connection-relation}.
A straightforward calculation shows that $\theta\Lambda_0=0$, hence the entries of $\Lambda_0$
are constants. Let us take 
\[
Y=\begin{pmatrix}F(t) & F(t)\log t+G(t)\\ \theta F(t) & \theta(F(t)\log t+G(t))
\end{pmatrix}=
\begin{pmatrix}F(t) & G(t)\\ (\theta F)(t) & F(t)+(\theta G)(t)\end{pmatrix}
\begin{pmatrix} 1 & \log t\\ 0 & 1\end{pmatrix}
\]
and its $\sigma$-image,
\[
Y^\sigma=\begin{pmatrix}F(t^\sigma) & G(t^\sigma)\\ (\theta F)(t^\sigma) & 
F(t^\sigma)+(\theta G)(t^\sigma)\end{pmatrix}
\begin{pmatrix} 1 & \log t^\sigma\\ 0 & 1\end{pmatrix}.
\]
Since $Y\Lambda_0(Y^\sigma)^{-1}$ has entries in $\Z_p\pow t$, the 
terms with $\log t$ should cancel out. This can only happen if $\Lambda_0$
has the form 
\[
\Lambda_0=\begin{pmatrix}\alpha_0 & \alpha_1\\ 0 & p\alpha_0\end{pmatrix}
\]
for certain $\alpha_0,\alpha_1\in\Q_p$, and in this case one has
\[
\begin{pmatrix} 1 & \log t\\ 0 & 1\end{pmatrix} \begin{pmatrix}\alpha_0 & \alpha_1\\ 0 & p\alpha_0\end{pmatrix} \begin{pmatrix} 1 & - \log t^\sigma\\ 0 & 1\end{pmatrix} = \begin{pmatrix} \alpha_0 & \alpha_1 - \log(t^\sigma/t^p)\\ 0 & 1\end{pmatrix}.
\]
Note that substituting $t=0$ in this matrix we should obtain $\Lambda(0)$. 
Using the values of $\lambda_0(0)$ and $\lambda_1(0)$ found in Proposition~\ref{cartier-on-example} we conclude that $\alpha_0=1$ and $\alpha_1=\log(\gamma^{p-1})$, where $\gamma$ is the vertex coefficient of $g(\v x)$. A straightforward computation
then shows that $\lambda_1(t)$, being the upper right entry in $\Lambda$, is equal to
\be{lambda1-formula}
\lambda_1(t)=\frac{pF(t)F(t^\sigma)}{W(t^\sigma)}
\left(\frac1p \log(\gamma^{p-1}) - \frac{1}{p}\log\left(\frac{t^\sigma}{t^p}\right)+
\frac{G(t)}{F(t)}-\frac{1}{p}\frac{G(t^\sigma)}{F(t^\sigma)}\right),
\ee
where
\[
W(t)=\det(Y)=F^2+F(\theta G)-(\theta F)G
\]
is called the 
{\it Wronskian determinant}. Furthermore, we also find that
\be{lambda0-formula}
\lambda_0(t)=\frac{F(t)}{F(t^\sigma)}-\frac{(\theta F)(t^\sigma)}{F(t^\sigma)}\lambda_1(t).
\ee

\begin{remark}
From \eqref{lambda0-formula} and \eqref{lambda1-formula} it follows that
\[
\cartier\left(\frac{1}{f}\right)\is \frac{F(t)}{F(t^\sigma)}\frac{1}{f^\sigma}+\lambda_1(t)
\left(\theta\left(\frac{1}{f}\right)-\frac{(\theta F)}{F}\frac{1}{f}\right)^\sigma
\mod{p^2\fil_2^\sigma}.
\]
Note that the factor following $\lambda_1(t)$ lies in $\fil_1^\sigma$. Although this will not
be used, we thought it nice enough to observe.
\end{remark}

We collect some useful properties in the following proposition.

\begin{proposition}\label{hw1-hw2}
With the notations as above we have $W(t)\in1+t\Z_p\pow t$ and 
\[
hw^{(2)}(t)\is W(t)^{1-p}\mod{p}.
\] 
\end{proposition}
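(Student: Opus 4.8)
The plan is to prove the two assertions separately. For $W(t)\in 1+t\Z_p\pow t$ I would argue as follows. From Proposition~\ref{second-order-de} we know $A(0)=B(0)=0$, so the local exponents of the operator $\theta^2-B(t)\theta-A(t)$ at $t=0$ are both zero, which is the usual source of a logarithmic solution; the Wronskian $W=\det(Y)$ satisfies the first-order equation $\theta W = B(t) W$ (the standard Abel/Wronskian identity for a second-order equation in $\theta$). Since $B(0)=0$ and $B(t)\in t\Z_p\pow t$, the equation $\theta W=BW$ has a unique power-series solution in $\Z_p\pow t$ with prescribed constant term, and that constant term is $W(0)=F(0)\big(F(0)+(\theta G)(0)\big)-(\theta F)(0)G(0)$. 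Using $F(0)=1$, $G(0)=0$ and $(\theta G)(0)=0$ (all visible from the series expansions recorded above, or directly from $G(0)=0$ and $\theta$ killing constants), this gives $W(0)=1$, hence $W(t)\in 1+t\Z_p\pow t$. I should double-check that $\theta W=BW$ really follows from $\theta Y=N_\theta Y$ with the particular $N_\theta=\begin{pmatrix}0&1\\A&B\end{pmatrix}$: indeed $\theta(\det Y)=\operatorname{tr}(N_\theta)\det Y=B\det Y$, which is exactly the claim.

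For the congruence $hw^{(2)}(t)\equiv W(t)^{1-p}\pmod p$, the idea is to compare two descriptions of the determinant of the Cartier operator on $CY(g)(2)$ modulo $p$. On the one hand, $hw^{(2)}(t)=p^{-L(2)}\det HW^{(2)}$ with $L(2)=m_2-m_1=2-1=1$; and by the curiosity noted after Definition~\ref{higherHasseWitt}, $F^{(2)}(\v x)\equiv f^\sigma(\v x^p)^2/f(\v x)^2\pmod{p^2}$, so modulo $p^2$ the matrix $HW^{(2)}$ computes (up to the usual twist) the same data as the Hasse--Witt matrix for $f^{p-1}$ acting on the level-$2$ part — more precisely, $\cartier$ acts on $CY(g)(2)$ modulo $p^2$, and after dividing the appropriate column by $p$ one gets the reduced matrix $\widetilde{HW}$ from the proof of Proposition~\ref{hwc-maximal}, whose determinant modulo $p$ is $hw^{(2)}(t)\bmod p$. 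On the other hand, modulo $p$ the Cartier operator on $CY(g)/\fil_2$ is computed by $\Lambda \bmod p$, and from the explicit formula $\Lambda(t)=Y\Lambda_0(Y^\sigma)^{-1}$ with $\Lambda_0=\begin{pmatrix}1&\alpha_1\\0&p\end{pmatrix}$ one gets $\det\Lambda(t)=p\,\det Y(t)/\det Y(t^\sigma)=p\,W(t)/W(t^\sigma)$. The matrix of $\cartier$ on the $2$-dimensional quotient modulo $\fil_2$ is $\Lambda\bmod p^2$; restricting attention to its reduction and using that $\sigma$ acts on $\Z_p\pow t$ as the $p$-power Frobenius modulo $p$ (so $W(t^\sigma)\equiv W(t)^p\pmod p$), one obtains $p^{-1}\det\Lambda(t)\equiv W(t)/W(t)^p=W(t)^{1-p}\pmod p$.

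The two sides must now be matched: $hw^{(2)}(t)\bmod p$ is, up to a unit coming from the choice of basis (the $u/u^\sigma$ ambiguity noted after the definition of $hw^{(k)}$), equal to $p^{-1}\det\Lambda(t)\bmod p$, because both are $p^{-1}$ times the determinant of the matrix representing $\cartier$ on the rank-$2$ module $CY(g)(2)\cong CY(g)/\fil_2$ in compatible bases — and here the bases $\{1/f,\theta(1/f)\}$ versus $\{1/f,\theta(1/f)^\sigma\}$ differ by a change-of-basis matrix with determinant a $1$-unit reducing to $W$-independent data, so the unit is trivial modulo $p$ once one normalizes as above. Carrying this bookkeeping out carefully gives $hw^{(2)}(t)\equiv W(t)^{1-p}\pmod p$.

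\textbf{Main obstacle.} The delicate point is the precise comparison between $hw^{(2)}$, defined via the Hasse--Witt matrix $HW^{(2)}$ with its $p$-divisible column, and $\det\Lambda$, defined on the free quotient $CY(g)/\fil_2$: one must verify that these two ``determinants of Cartier on a rank-$2$ object'' agree modulo $p$ \emph{with the correct normalization}, i.e.\ that the basis-change unit is a $1$-unit and that dividing by $p^{L(2)}=p$ on one side matches dividing $\det\Lambda_0=p$ by $p$ on the other. Everything else — the Wronskian identity $\theta W=BW$, the values $W(0)=1$, $F(0)=1$, $G(0)=0$, and the congruence $W(t^\sigma)\equiv W(t)^p\pmod p$ — is routine.
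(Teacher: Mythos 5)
The genuine gap is in your first assertion, that $W(t)\in1+t\Z_p\pow t$. You derive the Abel identity $\theta W=B(t)W$ (which is correct: $\theta\det Y=\mathrm{tr}(N_\theta)\det Y=B\det Y$) and the value $W(0)=1$, but then claim that this first-order equation with $B\in t\Z_p\pow t$ has its solution in $\Z_p\pow t$. It does not: writing $W=1+\sum_{n\ge1}w_nt^n$ and $B=\sum_{n\ge1}b_nt^n$, the recursion is $n\,w_n=\sum_{k=1}^nb_kw_{n-k}$, so $w_n$ is only determined after division by $n$, which destroys $p$-integrality whenever $p\mid n$. Equivalently $W=\exp\bigl(\sum_{n\ge1}b_nt^n/n\bigr)$, and the exponential of a series in $t\Z_p\pow t$ need not be $p$-integral (take $B=t$, giving $W=e^t$). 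Since $G$ has genuinely rational coefficients (harmonic sums appear in its expansion), a priori one only knows $W\in1+t\Q_p\pow t$, and the integrality is the nontrivial content of the first claim. The paper obtains it from the Frobenius structure rather than from the connection: $\tfrac1p\det\Lambda=W(t)/W(t^\sigma)$, and $\det\Lambda=\lambda_0\mu_1-\lambda_1\mu_0$ is divisible by $p$ because $\lambda_1,\mu_1\in pR$; hence $\phi(t):=W(t)/W(t^\sigma)\in\Z_p\pow t$ with $\phi(0)=1$, and the $t$-adically convergent telescoping product $W=\phi(t)\,\phi(t^\sigma)\,\phi(t^{\sigma^2})\cdots$ lies in $1+t\Z_p\pow t$. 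Note that your second paragraph invokes $W(t^\sigma)\equiv W(t)^p\pmod p$, which itself presupposes the integrality of the coefficients of $W$, so the congruence part of your argument also rests on this missing step.

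For the congruence itself your route is essentially the paper's: compare $\det HW^{(2)}=p\,hw^{(2)}$ with $\det\Lambda=p\,W(t)/W(t^\sigma)$ and reduce modulo $p$. The ``delicate point'' you flag is in fact immediate and involves no basis-change unit: $HW^{(2)}$ and $\Lambda$ are matrices of $\cartier$ on the same free module $M(2)$ in the same basis, the first read modulo $p^2M^\sigma$ and the second modulo $p^2\fil_2M^\sigma$; since $\cartier(M)\subset M^\sigma(2)+p^2\fil_2M^\sigma$ and $M^\sigma\cong M^\sigma(2)\oplus\fil_2M^\sigma$ (so that $M^\sigma(2)\cap p^2M^\sigma=p^2M^\sigma(2)$), the two matrices agree entrywise modulo $p^2$. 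Hence $p\,hw^{(2)}\equiv\det\Lambda\pmod{p^2}$, and dividing by $p$ gives $hw^{(2)}\equiv W(t)/W(t^\sigma)\pmod p$ directly. (Passing between the bases $1/f^2,\,tg/f^2$ and $1/f,\,\theta(1/f)$ is a constant unimodular substitution $U$, which multiplies the determinant by $\det U/\det U^\sigma=1$.)
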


\begin{proof}
Notice that a priori
$W(t)\in 1+t\Q_p\pow t$. Take the determinant on both sides of 
$\Lambda=Y\Lambda_0(Y^\sigma)^{-1}$. We get $\frac{1}{p}\det(\Lambda)=\frac{W(t)}{W(t^\sigma)}$.
Since $\det(\Lambda)=\lambda_0(t)\mu_1(t)-\lambda_1(t)\mu_0(t)$ is divisible by $p$,
we get that $W(t)/W(t^\sigma)=\phi(t)$ with $\phi(t)\in\Z_p\pow t$ and $\phi(0)=1$.
Hence $W(t)=\phi(t)\phi(t^\sigma)\ldots$,
which is in $1+t\Z_p\pow t$.

From \eqref{HW-action} we know that $\Lambda\is HW^{(2)}(CY(g))\mod{p^2}$. Taking
determinants, $p \, hw^{(2)}(t)\is\det(\Lambda(t))\mod{p^2}$. Divide by $p$ on both sides
and use $\frac{1}{p}\det(\Lambda(t))=W(t)/W(t^\sigma)$ to conclude that
$hw^{(2)}(t)\is W(t)/W(t^\sigma)\is W(t)^{1-p}\mod{p}$. 
\end{proof}

Note that $W(t)^{1-p}$ mod $p$ is a polynomial as a consequence of
Proposition \ref{hw1-hw2}.

To prove Corollary \ref{mirror-p-integral} we recall the following fact.
\begin{lemma}[{\bf Dieudonn\'e-Dwork lemma}]\label{dieudonne-dwork}
Let $g(t)\in t\Q_p\pow t$. Then
\[
g(t)-\frac{1}{p}g(t^\sigma)\in\Z_p\pow t
\]
if and only if $\exp(g(t))\in\Z_p\pow t$.
\end{lemma}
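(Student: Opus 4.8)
The plan is to transfer everything to the exponential $h:=\exp(g)$. Since $g$ has zero constant term, both $\exp(g)=\sum_{r\ge0}g^r/r!$ and $\log(1+u)$ converge $t$-adically and define mutually inverse bijections between $t\Q_p\pow t$ and $1+t\Q_p\pow t$; in particular $\log h=g$, so $\exp(g)\in\Z_p\pow t$ is exactly the condition $h\in 1+t\Z_p\pow t$. On the other side
\[
g(t)-\tfrac1p g(t^\sigma)\;=\;-\tfrac1p\bigl(\log h(t^\sigma)-p\log h(t)\bigr)\;=\;-\tfrac1p\log\!\left(\frac{h(t^\sigma)}{h(t)^p}\right),
\]
and because $p$ is odd the estimate $\ord_p(x^n/n)\ge n-\ord_p(n)\ge 1$ for $x\in pt\Z_p\pow t$, together with its counterpart for $\exp$, shows that $\exp$ and $\log$ also restrict to mutually inverse bijections between $pt\Z_p\pow t$ and $1+pt\Z_p\pow t$. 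Hence $g(t)-\frac1p g(t^\sigma)\in\Z_p\pow t$ is equivalent to $u(t):=h(t^\sigma)/h(t)^p\in 1+pt\Z_p\pow t$, and the lemma reduces to the assertion: for $h\in 1+t\Q_p\pow t$ one has $h\in 1+t\Z_p\pow t$ if and only if $h(t^\sigma)/h(t)^p\in 1+pt\Z_p\pow t$. Throughout I would use that $t^\sigma=t^pv$ with $v\in 1+pR\subseteq 1+p\Z_p\pow t$, so that $t^\sigma\equiv t^p\pmod{p\Z_p\pow t}$ and $t^\sigma$ has zero constant term.

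The easy direction, $h\in 1+t\Z_p\pow t\Rightarrow u\in 1+pt\Z_p\pow t$, I would handle by reduction modulo $p$: the Frobenius congruence $a^p\equiv a\pmod p$ on $\Z_p$ gives $h(t)^p\equiv h(t^p)\pmod{p\Z_p\pow t}$, and $h(t^p)\equiv h(t^\sigma)\pmod{p\Z_p\pow t}$ because $t^{ip}-(t^\sigma)^i=t^{ip}(1-v^i)\in p\Z_p\pow t$ for every $i\ge0$. Thus $h(t^\sigma)-h(t)^p\in p\Z_p\pow t$; it has zero constant term (both terms equal $1$ at $t=0$), so it lies in $pt\Z_p\pow t$, and dividing by the unit $h(t)^p\in 1+t\Z_p\pow t$ gives $u\in 1+pt\Z_p\pow t$.

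The reverse direction is where the real work lies, and it is the step I expect to be the main obstacle. Assuming $h(t^\sigma)=h(t)^pu(t)$ with $u=\sum_{j\ge0}u_jt^j$, $u_0=1$ and $u_j\in p\Z_p$ for $j\ge1$, I would write $h=\sum_{n\ge0}a_nt^n$ and prove $a_n\in\Z_p$ by strong induction on $n$, starting from $a_0=h(0)=1$. Fix $n\ge1$, assume $a_0,\dots,a_{n-1}\in\Z_p$, and set $\tilde h=\sum_{i<n}a_it^i\in\Z_p\pow t$. From $[h(t)^p]_n=pa_n+[\tilde h(t)^p]_n$ and the fact that $[h(t^\sigma)]_n$ involves only the $a_k$ with $k\le n/p<n$ (because $t^\sigma\in t^p\Z_p\pow t$), comparing the coefficients of $t^n$ in $h(t^\sigma)=h(t)^pu(t)$ gives
\[
p\,a_n\;=\;[h(t^\sigma)]_n\;-\;[\tilde h(t)^p]_n\;-\;\sum_{j\ge1}[h(t)^p]_{n-j}\,u_j,
\]
whose right-hand side lies in $\Z_p$ by the induction hypothesis. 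The key point, which is the heart of the argument, is that this right-hand side is in fact divisible by $p$: the last sum is in $p\Z_p$ since $u_j\in p\Z_p$; and modulo $p$ one has $[\tilde h(t)^p]_n\equiv[\tilde h(t^p)]_n$ by the Frobenius congruence, while $[h(t^\sigma)]_n\equiv[h(t^p)]_n=[\tilde h(t^p)]_n$ because $t^\sigma\equiv t^p$ and $v\equiv 1$ modulo $p$, so the first two terms cancel modulo $p$. Therefore $pa_n\in p\Z_p$, that is $a_n\in\Z_p$, which closes the induction and proves the lemma.
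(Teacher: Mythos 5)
Your proof is correct, but it takes a genuinely different route from the paper. You pass to the multiplicative side: setting $h=\exp(g)$, you use the exp/log estimates (valid for odd $p$) to show that the hypothesis $g(t)-\tfrac1p g(t^\sigma)\in\Z_p\pow t$ is equivalent to $u=h(t^\sigma)/h(t)^p\in 1+pt\Z_p\pow t$, and then you prove the multiplicative criterion ``$h\in 1+t\Z_p\pow t$ iff $u\in 1+pt\Z_p\pow t$'' by a direct strong induction on the coefficients $a_n$ of $h$, extracting $pa_n$ from the coefficient of $t^n$ in $h(t^\sigma)=h(t)^pu(t)$ and killing the remaining terms modulo $p$ via the Frobenius congruence $\tilde h(t)^p\equiv\tilde h(t^p)$ and $(t^\sigma)^k\equiv t^{pk}\pmod p$. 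This is essentially the classical proof of Dwork's integrality criterion adapted to a general lift $t^\sigma=t^pv$, $v\in 1+pR$ (which is indeed the standing assumption in the section where the lemma is used, so your reliance on it is legitimate — the paper's own argument uses the same structure of $t^\sigma$). The paper instead writes $\exp(g(t))=\prod_{k\ge1}(1+a_kt^k)$, reduces integrality of $\exp(g)$ to $a_k\in\Z_p$, and expresses $\phi(t)=pg(t)-g(t^\sigma)$ as $\sum_k\log\bigl((1+a_kt^k)^p/(1+a_k(t^\sigma)^k)\bigr)$; both implications then follow by an induction on $k$, subtracting the logs of the factors already known to be integral. Your version isolates a clean, reusable intermediate statement (the multiplicative Dieudonn\'e--Dwork criterion for the lift $t^\sigma$) at the cost of needing the quantitative $\exp$/$\log$ bijections between $pt\Z_p\pow t$ and $1+pt\Z_p\pow t$; the paper's product decomposition only ever takes logarithms of factors congruent to $1$ modulo $p$, so it needs less analytic bookkeeping, but its induction is tied to the special product form rather than to the raw coefficients of $h$.
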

\begin{proof}
Define $a_1,a_2,\ldots\in\Q_p$ by $\exp(g(t))=\prod_{k\ge1}(1+a_kt^k)$. One easily sees by induction
on $k$ that $\exp(g(t))\in\Z_p\pow t$ if and only if $a_k\in\Z_p$ for all $k\ge1$. 

Define $\phi(t)=pg(t)-g(t^\sigma)$ and rewrite $\phi$ in terms of the $a_k$,
\[
\phi(t)=\sum_{k\ge1}\log\left(\frac{(1+a_kt^k)^p}{1+a_k(t^\sigma)^k}\right).
\]
It suffices to prove that $\phi(t)\in p\Z_p\pow t$ if and only if $a_k\in\Z_p$ for all $k\ge1$. 
First note that if $a_k\in\Z_p$, then 
\[
\frac{(1+a_kt^k)^p}{1+a_k(t^\sigma)^k}\is\frac{1+a_kt^{pk}}{1+a_kt^{pk}}\is1\mod{p},
\]
hence
\[
\log\left(\frac{(1+a_kt^k)^p}{1+a_k(t^\sigma)^k}\right)\in p\Z_p\pow t.
\]
In particular $\phi(t)\in p\Z_p\pow t$ if $a_k\in\Z_p$ for all $k\ge1$. 

Suppose conversely that $\phi(t)\in p\Z_p\pow t$. The coefficient of $t$ in $\phi(t)$ equals $pa_1$.
Hence $a_1\in\Z_p$. We now prove by induction on $k$ that $a_k\in\Z_p$. Suppose $k\ge1$ and $a_1,\ldots,a_k\in\Z_p$.
Then
\[
\phi_k(t):=\phi(t)-\sum_{r=1}^k\log\left(\frac{(1+a_rt^r)^p}{1+a_r(t^\sigma)^r}\right)\in p\Z_p\pow t.
\]
The coefficient of $t^{k+1}$ in $\phi_k(t)$ reads $pa_{k+1}$. Hence $a_{k+1}\in\Z_p$, which completes our
induction step. 
\end{proof}

\begin{corollary}\label{mirror-p-integral}
Define the power series expansion $q(t)=t\exp(G(t)/F(t))$. 
Then $q(t)\in\Z_p\pow t$.
\end{corollary}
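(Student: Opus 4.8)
The plan is to reduce the statement to the Dieudonn\'e--Dwork Lemma (Lemma~\ref{dieudonne-dwork}) applied to the series $g(t):=G(t)/F(t)$, and then feed it the information contained in formula~\eqref{lambda1-formula}. First note that it is enough to prove $\exp(G(t)/F(t))\in\Z_p\pow t$, since $q(t)=t\exp(G(t)/F(t))$ and $t\in\Z_p\pow t$. Because $F(t)$ has integer coefficients and constant term $F(0)=g_0=1$, its inverse $1/F(t)$ lies in $\Z_p\pow t$; together with $G(0)=0$ this shows $g(t)\in t\,\Q_p\pow t$, so Lemma~\ref{dieudonne-dwork} applies and it remains to check that
\[
\frac{G(t)}{F(t)}-\frac{1}{p}\,\frac{G(t^\sigma)}{F(t^\sigma)}\ \in\ \Z_p\pow t
\]
for some Frobenius lift $\sigma$. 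I would choose the lift $t^\sigma=t^p$, for which all of the analysis of this section leading to~\eqref{lambda1-formula} is valid, and for which the term $\tfrac1p\log(t^\sigma/t^p)$ in~\eqref{lambda1-formula} vanishes.

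With this choice, rearranging~\eqref{lambda1-formula} gives
\[
\frac{G(t)}{F(t)}-\frac{1}{p}\,\frac{G(t^p)}{F(t^p)}
=\frac{\lambda_1(t)}{p}\cdot\frac{W(t^p)}{F(t)\,F(t^p)}-\frac{1}{p}\log\!\bigl(\gamma^{p-1}\bigr).
\]
Now I would verify term by term that the right-hand side lies in $\Z_p\pow t$. By Proposition~\ref{cartier-on-example} the series $\lambda_1(t)$ belongs to $R\subseteq\Z_p\pow t$ and is divisible by $p$, hence $\lambda_1(t)/p\in\Z_p\pow t$. By Proposition~\ref{hw1-hw2} we have $W(t)\in 1+t\Z_p\pow t$, so $W(t^p)\in\Z_p\pow t$; and $F(t),F(t^p)$ are units in $\Z_p\pow t$ as observed above. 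Finally $\gamma\in\Z_p^\times$ implies $\gamma^{p-1}\in 1+p\Z_p$, so $\log(\gamma^{p-1})\in p\Z_p$ and $\tfrac1p\log(\gamma^{p-1})\in\Z_p$. Hence the displayed difference lies in $\Z_p\pow t$, and Lemma~\ref{dieudonne-dwork} yields $\exp(G(t)/F(t))\in\Z_p\pow t$, whence $q(t)\in\Z_p\pow t$.

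This is really a bookkeeping argument once~\eqref{lambda1-formula}, Proposition~\ref{cartier-on-example} and Proposition~\ref{hw1-hw2} are in hand; I do not expect a genuine obstacle. The only points that need a little care are the two ``extra'' contributions in~\eqref{lambda1-formula} that are not manifestly multiples of the $p$-divisible quantity $\lambda_1(t)$: the constant $\tfrac1p\log(\gamma^{p-1})$, whose $p$-integrality is exactly where the hypothesis $p\nmid\gamma$ is used, and (if one prefers to work with a general lift $t^\sigma=t^p v$, $v\in 1+pR$) the term $\tfrac1p\log(t^\sigma/t^p)$, which lies in $\Z_p\pow t$ since $t^\sigma/t^p\in 1+pR$. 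One should also remember that Lemma~\ref{dieudonne-dwork} requires $G/F$ to have bounded denominators, which is ensured by $F(0)=1$.
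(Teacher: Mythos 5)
Your proof is correct and essentially the same as the paper's: it rests on rearranging \eqref{lambda1-formula}, using $\lambda_1(t)\in p\Z_p\pow t$ from Proposition~\ref{cartier-on-example}, the invertibility of $F$ and $W$ in $\Z_p\pow t$, and $\tfrac1p\log(\gamma^{p-1})\in\Z_p$, followed by the Dieudonn\'e--Dwork lemma. The only (harmless) cosmetic difference is that you specialize to the lift $t^\sigma=t^p$ to make the $\tfrac1p\log(t^\sigma/t^p)$ term vanish, whereas the paper keeps a general lift and discards that term via $t^p/t^\sigma\in1+p\Z_p\pow t$.
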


\begin{proof}
Note that the Cartier upper right entry $\lambda_1(t)$ is given by
formula \eqref{lambda1-formula}. By Proposition \ref{cartier-on-example} 
we know that $\lambda_1(t)\in p\Z_p\pow t$. Together with $F(t),W\in\Z_p\pow t^\times$ 
and $\frac{1}{p}\log(\gamma^{p-1})\in\Z_p$ this implies that
\[
\frac{1}{p}\log(t^p/t^\sigma)+\frac{G(t)}{F(t)}-
\frac{1}{p}\frac{G(t^\sigma)}{F(t^\sigma)}\in\Z_p\pow t.
\]
Since $t^p/t^\sigma \in 1 + p \Z_p\lb t \rb$, this implies
\[
\frac{G(t)}{F(t)}-
\frac{1}{p}\frac{G(t^\sigma)}{F(t^\sigma)}\in\Z_p\pow t.
\]
Application of the Dieudonn\'e-Dwork lemma then gives our conclusion.
\end{proof}

\begin{remark} Recall that $F(t)$ and $F(t) \log t + G(t)$ are solutions of the second order differential operator defined in Proposition~\ref{second-order-de}. 
The power series $q(t)= t \exp(G(t)/F(t))$ is called the \emph{canonical coordinate}.
Its inverse power
series $t(q)$ is known as the \emph{mirror map}. 
The statement of the above corollary is clearly equivalent to $p$-integrality of the mirror map.
In the case of hypergeometric equations of so-called MUM-type, integrality of the mirror map is
proven by Krattenthaler and Rivoal, \cite{KR10}. They use explicit congruences
for hypergeometric coefficients studied by Dwork.
\end{remark}

In the simple example of Section~\ref{sec:example1} we had seen that there
exists a Frobenius lift $\sigma$ which renders the action of $\cartier$
modulo $\fil_2$ particularly simple. It turns out that in the setting of
completely symmetric Calabi-Yau families there is also such an excellent lift.

\begin{lemma}\label{lambda-1-via-q} Let $q\in\Z_p\pow t$ be the canonical coordinate defined above. Then 
\[
\lambda_1(t) = \frac{F(t)F(t^\sigma)}{W(t^\sigma)}\log\left( \frac{\gamma^{p-1} q^p}{q^\sigma} \right).
\]
\end{lemma}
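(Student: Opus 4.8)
The plan is to substitute the definition of the canonical coordinate directly into formula~\eqref{lambda1-formula} and recognize the outcome. First I would record the purely formal identities that follow from $q(t) = t\exp(G(t)/F(t))$: since $F(0)=1$ and $G(0)=0$, the ratio $G/F$ is a well-defined element of $t\Z_p\pow t$, and one has $q^p = t^p\exp(pG(t)/F(t))$ together with $q^\sigma = q(t^\sigma) = t^\sigma\exp(G(t^\sigma)/F(t^\sigma))$. Dividing the first by the second and multiplying by $\gamma^{p-1}$ gives
\[
\frac{\gamma^{p-1}q^p}{q^\sigma} = \gamma^{p-1}\,\frac{t^p}{t^\sigma}\,\exp\left(p\,\frac{G(t)}{F(t)} - \frac{G(t^\sigma)}{F(t^\sigma)}\right).
\]

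Next I would check that the right-hand side is $\equiv 1 \pmod{(p,t)}$, so that its logarithm is a well-defined element of $\Z_p\pow t$: indeed $\gamma^{p-1}\equiv 1\pmod p$ by Fermat's little theorem, $t^p/t^\sigma = 1/v \in 1 + pR$ since $t^\sigma = t^p v$ with $v \in 1+pR$, and the exponential factor is $1 + O(t)$ because $G(0)=0$. Taking logarithms then yields
\[
\log\left(\frac{\gamma^{p-1}q^p}{q^\sigma}\right) = \log(\gamma^{p-1}) - \log\left(\frac{t^\sigma}{t^p}\right) + p\,\frac{G(t)}{F(t)} - \frac{G(t^\sigma)}{F(t^\sigma)}.
\]

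Finally I would divide this identity by $p$ and compare it term by term with the parenthesized factor appearing in~\eqref{lambda1-formula}; the two expressions are literally equal. Multiplying through by $pF(t)F(t^\sigma)/W(t^\sigma)$ then turns~\eqref{lambda1-formula} into precisely
\[
\lambda_1(t) = \frac{F(t)F(t^\sigma)}{W(t^\sigma)}\log\left(\frac{\gamma^{p-1}q^p}{q^\sigma}\right),
\]
which is the claim. I do not expect any genuine obstacle here; the lemma is essentially a rewriting of~\eqref{lambda1-formula}, and the only step worth a sentence of care is the convergence of $\log\bigl(\gamma^{p-1}q^p/q^\sigma\bigr)$, which reduces as above to Fermat's little theorem together with the hypotheses $t^\sigma \in t^p(1+pR)$ and $G(0)=0$.
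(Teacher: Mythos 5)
Your proposal is correct and is essentially the paper's own proof: the paper likewise just substitutes $q(t)=t\exp(G(t)/F(t))$ into the right-hand side and verifies that one recovers formula~\eqref{lambda1-formula}, which is exactly your term-by-term identification after taking logarithms. Your extra sentence checking that $\log\bigl(\gamma^{p-1}q^p/q^\sigma\bigr)$ is well defined (via $\gamma^{p-1}\in 1+p\Z_p$, $t^\sigma/t^p\in 1+pR$, and $G(0)=0$) is a harmless and welcome bit of added care, not a deviation.
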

\begin{proof} In the right-hand side we substitute $q(t)= t \exp(G(t)/F(t))$ and 
verify that we recover formula~\eqref{lambda1-formula}.
\end{proof}

\begin{definition}\label{excellent-lift}
We call $\sigma$ such that $q^\sigma=\gamma^{p-1}q^p$ the excellent Frobenius lift 
on $\Z_p\lb t \rb$.
\end{definition}

We note that due to Corollary~\ref{mirror-p-integral} one has $\Z_p\lb t \rb=\Z_p\lb q \rb$, and therefore taking for $q^\sigma$ any element congruent to $q^p$ modulo $p$ defines a Frobenius lift on this ring.

\begin{proof}[Proof of Theorem \ref{excellent-example2}]
Let $\sigma$ be the excellent Frobenius lift on $\Z_p\lb t \rb$ as in Definition~\ref{excellent-lift}. Consider the series $t^\sigma \in \Z_p\lb t \rb$. It is given by $t(q^\sigma)=t(\gamma^{p-1}q^p) \in \Z_p\lb q \rb$ and subsequent substitution of $q=q(t)$. If $t^\sigma \in R$, then the excellent lift restricts to $R$ and Lemma~\ref{lambda-1-via-q} implies that $\lambda_1(t) = 0$. Note that Lemma~\ref{lambda-1-via-q} also shows that, conversely, we can have $\lambda_1(t)=0$ only if the Frobenius lift on $R$ is the restriction of the excellent lift. When $\lambda_1(t)=0$ then formula~\eqref{lambda0-formula} yields $\lambda_0(t)=F(t)/F(t^\sigma)$ and the desired congruence~\eqref{congruence-mod-fil2} follows from Proposition~\ref{cartier-on-example}. 

It therefore remains to prove that $t^\sigma \in R$. Since $hw^{(1)}(t)$ and $hw^{(2)}(t)$ are units in $R$, it follows that $R$ contains the $p$-adic completion of $\Z_p[t,1/hw^{(1)}(t),1/hw^{(2)}(t)]$. In Theorem~\ref{excellent-is-analytic} below we show that $t^\sigma$ belongs to this latter ring, which completes our proof. 
\end{proof}

\begin{theorem}\label{excellent-is-analytic}
Let $R$ be the
$p$-adic completion of the ring $\Z_p[t,1/hw^{(1)}(t),1/hw^{(2)}(t)]$. Let $\sigma$ be the
excellent Frobenius lift on $\Z_p\lb t \rb$ as in Definition~\ref{excellent-lift}. 
Then $t^\sigma \in \Z_p\lb t \rb$ is an 
element of $R$.   
\end{theorem}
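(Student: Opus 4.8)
The plan is to follow Dwork's proof of ``Deligne's theorem'' in \cite[\S7--\S8]{dwork69}: we exhibit $t^\sigma$ as a $p$-adic limit of rational functions of $t$ whose denominators are powers of $hw^{(1)}(t)\,hw^{(2)}(t)$. Since $R$ is by construction the $p$-adic completion of $\Z_p[t,1/hw^{(1)},1/hw^{(2)}]$ and is a subring of $\Z_p\lb t\rb$, any such limit automatically lies in $R$, which is the assertion.

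First I would carry out the whole construction over $R'=\Z_p\lb t\rb$, where the first and second Hasse--Witt conditions hold automatically because $hw^{(1)}(0)=1$ and $hw^{(2)}(0)\in\Z_p^\times$. For the excellent lift Lemma~\ref{lambda-1-via-q} gives $\lambda_1(t)=0$, so the Cartier matrix $\Lambda$ of $CY(g)$ in~\eqref{cartier-on-rank2} is lower triangular, $\Lambda=\left(\begin{smallmatrix}\lambda_0&0\\\mu_0&\mu_1\end{smallmatrix}\right)$, and the $(1,2)$-entry of the relation in Proposition~\ref{Cartier-connection-relation} collapses to the clean identity $\theta(t^\sigma)/t^\sigma=\mu_1/\lambda_0$. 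Combining $\det\Lambda\equiv p\,hw^{(2)}\bmod p^2$ (Proposition~\ref{hw1-hw2}) with $\lambda_0\equiv hw^{(1)}\bmod p$, one sees that $\lambda_0$ and $\mu_1/p$ are units of $R'$ reducing to $hw^{(1)}$ and $hw^{(2)}/hw^{(1)}$ modulo $p$. So it suffices to prove that the entries of $\Lambda$, and more generally of the iterates $\Lambda^{(s)}=\Lambda\,\sigma(\Lambda)\cdots\sigma^{s-1}(\Lambda)$, are congruent modulo $p^{2s}$ to rational functions of $t$ having denominators that are powers of $hw^{(1)}hw^{(2)}$, and then to recover $t^\sigma$ from these.

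The rational approximants come from the iterated Cartier operator. By~\eqref{HW-action} one has $\Lambda\equiv HW^{(2)}(CY(g))\bmod p^2$, and $HW^{(2)}$ is a matrix of \emph{polynomials} in $t$ and $t^\sigma$, since its entries are Laurent coefficients of $F^{(2)}(\v x)=(1-tg(\v x))^{p-2}\bigl(2(1-t^\sigma g(\v x^p))-(1-tg(\v x))^p\bigr)$. Iterating formula~\eqref{cartier-action}, truncated modulo $p^{2s}$, produces analogously a Hasse--Witt type matrix $HW^{(2),s}$ which is polynomial in $t,t^\sigma,\dots,t^{\sigma^{s-1}}$ and satisfies $\Lambda^{(s)}\equiv HW^{(2),s}\bmod p^{2s}$. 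Reading off suitable Laurent coefficients of $\cartier^s(1/f)$ and $\cartier^s(\theta(1/f))$ at a primitive vertex $\v v$ of $\Delta$ and at $\v 0$, and using Proposition~\ref{expansion-coeffs-k} together with the telescoping identity $\lambda_0^{(s)}=F(t)/F(t^{\sigma^s})$, one expects to obtain congruences that pin down $t^{\sigma^s}$ modulo $p^{2s}$ in terms of the coefficients $a_{p^s\v v}(t)$, $a_{p^s\v 0}(t)=F(t)$ and of $HW^{(2),s}$; the key point is that the function $t\mapsto(a_\v v/F)(t)=\gamma t+O(t^2)$ is an invertible change of coordinate, which lets one solve for $t^{\sigma^s}$ and then take the $p$-adic limit $s\to\infty$.

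The main obstacle is the denominator control together with the elimination of the higher Frobenius twists: the naive iterate of~\eqref{cartier-action} is rational in $t,t^\sigma,\dots,t^{\sigma^{s-1}}$ rather than in $t$ alone, so one must feed the $(s-1)$-st approximation of $t^\sigma$ back into the $s$-th computation and verify that the only new denominators introduced are powers of $hw^{(1)}hw^{(2)}$, all the while bounding the $p$-adic error at each stage. A secondary technical point is that recovering $t^\sigma$ itself, rather than the logarithmic derivative $\theta(t^\sigma)/t^\sigma$, from these congruences requires an analytic refinement of the Dieudonn\'e--Dwork Lemma~\ref{dieudonne-dwork}. Dwork's \S7--\S8 treats exactly this package in the rank-two setting of the Legendre family, and the argument here is a direct adaptation of it to the present higher-dimensional situation.
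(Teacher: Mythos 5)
Your preliminary observations are correct: for the excellent lift $\lambda_1=0$, so $\Lambda$ is triangular, the $(1,2)$-entry of Proposition~\ref{Cartier-connection-relation} gives $\theta(t^\sigma)/t^\sigma=\mu_1/\lambda_0$, and $\lambda_0$, $\mu_1/p$ reduce mod $p$ to $hw^{(1)}$ and $hw^{(2)}/hw^{(1)}$. But from that point on the proposal is a plan, not a proof, and the two places where you yourself locate ``the main obstacle'' are exactly where the argument is missing. First, the circularity: the matrices $HW^{(2)}$ and their iterates $HW^{(2),s}$ are polynomial in $t,t^\sigma,\dots,t^{\sigma^{s-1}}$, and for the \emph{excellent} lift these quantities are precisely what you are trying to show lie in $R$. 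The suggested bootstrap (``feed the $(s-1)$-st approximation of $t^\sigma$ back in'') needs a base case, an inductive error estimate showing the substitution error stays below $p^{2s}$, and a proof that no denominators beyond powers of $hw^{(1)}hw^{(2)}$ appear; none of this is supplied. Second, the recovery step: with $\lambda_1=0$ the coefficient congruences of Proposition~\ref{expansion-coeffs-k} pin down $t^{\sigma^s}$ modulo $p^{2s}$, or the logarithmic derivative $\theta(t^\sigma)/t^\sigma$, but not $t^\sigma$ itself to precision better than $p^2$; passing from $t^{\sigma^s}$ (or from a logarithmic derivative) to $t^\sigma$ requires an argument you only gesture at via an unstated ``analytic refinement'' of Lemma~\ref{dieudonne-dwork}, which as it stands concerns integrality in $\Z_p\lb t\rb$, not membership in the subring $R$. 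Finally, Dwork's \S7--\S8 relies on the modular equation available for the Legendre family, so ``direct adaptation'' is not available in the higher-dimensional cases --- which is the very reason this theorem needs a different proof.

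The paper's actual argument avoids iteration and limits altogether. From $q^\sigma=\gamma^{p-1}q^p$ one gets the implicit equation $\log(\gamma^{p-1})+\log(t^p/t^\sigma)+p\,G(t)/F(t)-G(t^\sigma)/F(t^\sigma)=0$; subtracting formula~\eqref{lambda1-formula} evaluated for the \emph{standard} lift $t\mapsto t^p$ (whose $\lambda_1$ lies in $pR$ --- this is the only place the Hasse--Witt denominators enter, through Corollary~\ref{main-theorem-alt} over $R$) yields, after writing $t^\sigma=t^p(1+h)$ and Taylor-expanding $\log t+G/F$ around $t^p$ using $\theta(\log t+G/F)=W/F^2$ and $\theta(W/F^2)/(W/F^2)\in R$, a relation $\sum_{m\ge1}\frac{r_m}{m!}h^m=\frac{F(t^p)}{F(t)}\lambda_1(t)$ with $r_m\in R$ and $r_1=1$. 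Formal reversion (Lemma~\ref{inverse-series}) then expresses $h$ as a $p$-adically convergent series in $\frac{F(t^p)}{F(t)}\lambda_1(t)\in pR$, hence $h\in R$ and $t^\sigma\in R$. If you want to salvage your route, you would essentially have to rediscover this reversion step to solve for $t^\sigma$; as written, the proposal does not establish the theorem.
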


Recall that $hw^{(1)}(t)$ and $hw^{(2)}(t) \mod p$, and hence also the ring $R$ in this theorem,
do not depend on the Frobenius lift $\sigma$. At the beginning of this section we noted that
\[
hw^{(1)}(t) \is \sum_{k=0}^{p-1}\text{constant term of }t^kg(\v x)^k\is F_p(t)\mod{p},
\]
where $F_p$ is the $p$-truncation of 
the power series $F(t)$. Another observation, it follows from \eqref{congruence-mod-ps}
with $m=s=1$ that $hw^{(1)}\is F_p(t)\is F(t)^{1-p}\mod{p}$. 
 
Similarly, recall from Proposition \ref{hw1-hw2} that $hw^{(2)}\is W(t)^{1-p}\mod{p}$,
where $W(t)$ is the Wronskian determinant of the differential equation introduced
in Proposition~\ref{second-order-de}. In analogy with the previous we believe that
$hw^{(2)}(t)$ equals the $p$-truncation of $W(t)$ modulo $p$. 

\begin{proof}[Proof of Theorem \ref{excellent-is-analytic}]
Recall that $t^\sigma$ is defined by 
\[
\log(\gamma^{p-1})+\log (t^p/t^\sigma)+p\frac{G(t)}{F(t)}-\frac{G(t^\sigma)}{F(t^\sigma)}=0.
\]
Furthermore, by \eqref{lambda1-formula} applied with the Frobenius lift
$t\to t^p$, we get
\[
\log(\gamma^{p-1})+p\frac{G(t)}{F(t)}-\frac{G(t^p)}{F(t^p)}=\frac{W(t^p)}{F(t^p)F(t)}\lambda_1(t),
\]
where $\lambda_1(t)$ lies in $pR$ and is associated to the Frobenius lift $t\to t^p$.
Subtract the first equation from the second to get
\[
\log (t^\sigma/t^p)+\frac{G(t^\sigma)}{F(t^\sigma)}-\frac{G(t^p)}{F(t^p)}
=\frac{W(t^p)}{F(t^p)F(t)}\lambda_1(t).
\]
Since $t^\sigma=t(\gamma^{p-1}q^p)$ we see that it can be written as $t^\sigma=t^p+t^ph(t)$ 
for some power series $h \in p \Z_p\lb t \rb$. Substituting this expression into the above
left hand side and expanding it in a Taylor series 
around $t^p$ we get
\[
\sum_{m\ge1}\left[ t^{m}\left(\frac{d}{dt}\right)^m\left(\log t+\frac{G(t)}{F(t)}\right)\right] \Big|_{t=t^p}\frac{h(t)^m}{m!}.
\]
Notice that
\be{first-derivative}
\theta\left(\log t+\frac{G(t)}{F(t)}\right)=\frac{W(t)}{F(t)^2}.
\ee
Let us define $H(t)=\frac{W(t)}{F(t)^2}$. We know that $\theta F/F\in R$ and 
$\theta W/W=B(t)\in R$.
Here $B(t)$ is the coefficient of the second order equation introduced in 
Proposition~\ref{second-order-de}.
Hence
\be{second-derivative}
\frac{\theta H}{H}=\frac{\theta W}{W}-2\frac{\theta F}{F}\in R.
\ee
Using \eqref{first-derivative} and \eqref{second-derivative} we can show by induction that 
for every $m\ge1$ there exists $\tilde{r}_m(t)\in R$ such that
\[
t^m\left(\frac{d}{dt}\right)^m\left(\log t+\frac{G(t)}{F(t)}\right)=\tilde{r}_m(t)H(t).
\]
In particular, $\tilde{r}_1=1$. Let us write $r_m=\tilde{r}_m(t^p)$. Clearly $r_m \in R$.
Our implicit equation for $h(t)$ becomes
\[
H(t^p)\sum_{m\ge1}\frac{r_m}{m!}h(t)^m=\frac{W(t^p)}{F(t^p)F(t)}\lambda_1(t),
\]
Hence 
\[
\sum_{m\ge1}\frac{r_m}{m!}h(t)^m=\frac{F(t^p)}{F(t)}\lambda_1(t).
\]
Using Lemma \ref{inverse-series} below we apply the reverse series on both sides
to obtain
\[
h(t)=\sum_{m\ge1}\frac{s_m}{m!}\left(\frac{F(t^p)}{F(t)}\lambda_1(t)\right)^m
\]
with $s_m\in R$ for $m\ge1$. This proves our theorem because $F(t^p)/F(t)\in R$
and $\lambda_1(t)\in pR$.
\end{proof}

\begin{lemma}\label{inverse-series}
Let $R$ be a ring. Consider the series $P(z)=\sum_{m\ge1}r_m\frac{z^m}{m!}$ with $r_m\in R$
for all $m\ge1$ and $r_1=1$. Then the reverse power series has the form $Q(z)=
\sum_{m\ge1}s_m\frac{z^m}{m!}$ with $s_m\in R$ for all $m\ge1$.
\end{lemma}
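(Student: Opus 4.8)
The plan is to solve for the coefficients $s_m$ recursively from the functional equation $P(Q(z))=z$ and to track the factorial denominators carefully. First note that since $r_1=1$, comparing coefficients of $z$ in $P(Q(z))=z$ forces $s_1=1\in R$, which serves as the base case of an induction on $m$. Writing $P(w)=w+\sum_{k\ge 2} r_k\,w^k/k!$ and comparing coefficients of $z^n$ for $n\ge 2$ yields
\[
\frac{s_n}{n!}\;=\;-\sum_{k=2}^{n}\frac{r_k}{k!}\,[z^n]\,Q(z)^k,
\qquad\text{hence}\qquad
s_n\;=\;-\sum_{k=2}^{n} r_k\cdot\frac{n!}{k!}\,[z^n]\,Q(z)^k .
\]
So it suffices to show that each quantity $\dfrac{n!}{k!}[z^n]Q(z)^k$ with $2\le k\le n$ lies in $R$, assuming inductively that $s_1,\ldots,s_{n-1}\in R$.

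For this I would expand
\[
[z^n]\,Q(z)^k \;=\; \sum_{\substack{m_1+\cdots+m_k=n\\ m_i\ge 1}}\ \prod_{i=1}^{k}\frac{s_{m_i}}{m_i!},
\]
and multiply by $n!/k!$. Each multinomial coefficient $\binom{n}{m_1,\ldots,m_k}=n!/(m_1!\cdots m_k!)$ is an integer; and dividing the resulting sum over \emph{ordered} tuples $(m_1,\ldots,m_k)$ by $k!$ is the same as summing over \emph{unordered} set partitions of $\{1,\ldots,n\}$ into exactly $k$ nonempty blocks, since each such partition arises from $k!$ ordered ones and the product $\prod_i s_{|B_i|}$ does not depend on the ordering. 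Thus
\[
\frac{n!}{k!}\,[z^n]\,Q(z)^k \;=\; \sum_{\pi}\ \prod_{B\in\pi} s_{|B|},
\]
the sum running over set partitions $\pi$ of $\{1,\ldots,n\}$ into $k$ blocks. As $k\ge 2$, every block has $|B|\le n-1$, so by the inductive hypothesis each summand is a product of elements of $R$; hence the displayed quantity is in $R$. Combined with $r_k\in R$ this gives $s_n\in R$, completing the induction.

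\textbf{Main obstacle.} The only genuine difficulty is the bookkeeping of the denominators $1/m_i!$ and $1/k!$: one must be sure they cancel against $n!$ to leave coefficients in $R$ (in fact in $\mathbb{Z}_{\ge 0}$). The set‑partition reformulation above is what makes this cancellation transparent; a Lagrange‑inversion argument would also work in principle, but it forces one to control the denominators inside $(w/P(w))^n$, which is messier, so I would favour the self‑contained recursion.
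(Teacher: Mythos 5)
Your proof is correct and is essentially the paper's argument: the paper obtains the same recursion by applying $\frac{d^m}{dz^m}$ to $Q(P(z))=z$ at $z=0$ (chain rule/Fa\`a di Bruno), which expresses $s_m$ as a $\Z$-linear combination of products of $s_i$, $i<m$, and $r_j$, $j\le m$. You work with $P(Q(z))=z$ and extract coefficients instead, making the integrality of the combinatorial coefficients explicit via the set-partition identity, but this is the same induction with the same mechanism of cancellation of factorials.
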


\begin{proof}
For every $m\ge1$, apply the $m$-th derivative $\frac{d^m}{dz^m}$ to the expression
$Q(P(z))=z$ and substitute $z=0$. This way one obtains an expression for $s_m
=Q^{(m)}(0)$ as a $\Z$-linear combination of products of $s_i$ with $i<m$
and $r_j$ with $j\le m$.
\end{proof}

As we remarked at the beginning of this section, we were unable to prove a mod $p^{2s}$ version
of \eqref{congruence-mod-ps}. However, using coefficients of Laurent expansions of $1/f$ it is
possible to deduce supercongruences that look similar. We give an example here.

\begin{proposition}
Consider the hypercubic family in $n$ variables. Let $p$ be an odd prime. Consider for any odd integer $Q$
the polynomial in $1/t$ defined by
\[
P_Q(t)=t^{-Q}\sum_{k=0}^{(Q-1)/2}\left(\frac{(2k-Q)\cdots(k+1-Q)}{k!}\right)^n t^{2k}.
\]
Let $\sigma$ be the excellent Frobenius lift. Then
\[
P_{p^s}(t)\is\frac{F(t)}{F(t^\sigma)}P_{p^{s-1}}(t^\sigma)\mod{p^{2s}}
\]
for any $s\ge1$.
\end{proposition}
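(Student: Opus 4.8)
The plan is to exhibit each $P_Q(t)$ as a single Laurent-expansion coefficient of $1/f$ and then quote Proposition~\ref{expansion-coeffs-k}. Write $\v v=(1,\dots,1)$, which is a vertex of the Newton polytope $\Delta=[-1,1]^n$ of $g(\v x)=\prod_{i=1}^n(x_i+x_i^{-1})$; since the coefficient of $\v x^{\v v}$ in $g$ is $1$, the coefficient $f_{\v v}=-t$ is not divisible by $p$, so the formal expansion of $1/f$ at $\v v$ is defined and supported in $-\v v+C(\Delta-\v v)=-\v v+\R_{\le0}^n$, with coefficients in $R[t^{-1}]$. First I would compute this expansion from $1/f=-\sum_{j\ge1}t^{-j}g(\v x)^{-j}$ together with $(x_i+x_i^{-1})^{-j}=x_i^{-j}(1+x_i^{-2})^{-j}=\sum_{m\ge0}(-1)^m\binom{j+m-1}{m}x_i^{-j-2m}$. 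Picking out the coefficient of $\v x^{-Q\v v}$ forces $m_i=(Q-j)/2$ in every coordinate, hence $1\le j\le Q$ with $j\equiv Q\pmod 2$; substituting $j=Q-2k$ and using $\binom{2k-Q}{k}=(-1)^k\binom{Q-k-1}{k}$ one gets
\[
\alpha_{-Q\v v}(1/f)\ =\ -\,t^{-Q}\sum_{k=0}^{(Q-1)/2}\binom{2k-Q}{k}^{n}t^{2k}\ =\ -\,P_Q(t).
\]

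Next I would bring in the excellent lift. By Theorem~\ref{excellent-example2} (applied to the hypercubic family), for this $\sigma$ one has $\cartier(1/f)\is\frac{F(t)}{F(t^\sigma)}\frac{1}{f^\sigma}\mod{p^2\fil_2^\sigma}$, so in the basis $\omega_1=1/f$, $\omega_2=\theta(1/f)$ of $Q^{(2)}(CY(g))=CY(g)/\fil_2$ the Cartier matrix $\Lambda$ of~\eqref{k-matrices-def} has first row $\big(F(t)/F(t^\sigma),\,0\big)$. Now I would apply Proposition~\ref{expansion-coeffs-k} with $k=2$, expansion vertex $\v b=\v v$ and $\v m=-\v v\in C(\Delta-\v v)_\Z$ (note $p^s$ is odd, so $P_{p^s}$ is defined); reading the congruence $\v a_{p^s\v m}\equiv\Lambda\,\sigma(\v a_{p^{s-1}\v m})\mod{p^{2s}}$ in its first coordinate gives
\[
\alpha_{-p^s\v v}(1/f)\ \is\ \frac{F(t)}{F(t^\sigma)}\,\alpha_{-p^{s-1}\v v}(1/f)^{\sigma}\mod{p^{2s}}.
\]
Substituting the formula from the first paragraph, and using that $P_{p^{s-1}}$ has integer coefficients so that $P_{p^{s-1}}(t)^\sigma=P_{p^{s-1}}(t^\sigma)$, the two sign factors cancel and one obtains precisely $P_{p^s}(t)\is\frac{F(t)}{F(t^\sigma)}P_{p^{s-1}}(t^\sigma)\mod{p^{2s}}$.

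The part that needs care is the setup in the first paragraph: one must confirm that $-\sum_{j\ge1}t^{-j}g^{-j}$ really is \emph{the} expansion of $1/f$ at the vertex $\v v$ in the sense of Proposition~\ref{expansion-coeffs-k} (uniqueness of the Laurent series supported in $-\v v+\R_{\le0}^n$ and agreeing with $1/f$ as a rational function handles this), and that the Proposition is legitimately applicable when $f_{\v v}=-t$ is merely prime to $p$ rather than a unit of $R$. The latter pushes the expansion coefficients into $R[t^{-1}]$ and requires extending $\sigma$ there, which is harmless because $t^\sigma/t^p\in1+pR$ is invertible; and the divisibility by $p^{2s}$ of the $\v x^{-p^s\v v}$-coefficient of any element of $\fil_2$ — the input to Proposition~\ref{expansion-coeffs-k} via Lemma~\ref{katzlemma}, here with $\gcd(p^s,\dots,p^s)^2=p^{2s}$ — is a statement about individual Laurent coefficients and is unaffected by inverting $t$. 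The remaining work is the binomial bookkeeping behind the first display, which is routine.
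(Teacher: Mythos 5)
Your proposal is correct and follows essentially the paper's own route: identify $P_Q(t)$ (up to an overall sign) as an expansion coefficient of $1/f$ with respect to a vertex of the hypercube, then read that coefficient off the congruence of Theorem~\ref{excellent-example2} for the excellent lift, using the divisibility of $\fil_2$-coefficients exactly as packaged in Proposition~\ref{expansion-coeffs-k}. The only cosmetic differences are that the paper expands $1/f$ as a power series in $x_1,\ldots,x_n$ and takes the coefficient of $(x_1\cdots x_n)^{p^s}$ (your expansion at $\v v$ is its image under the symmetry $x_i\mapsto 1/x_i$), and that you track the harmless common sign $-P_Q$ which the paper suppresses.
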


\begin{proof}
Expand $1/f$ as power series in $x_1,\ldots,x_n$. 
A straightforward computation shows that the coefficient of $(x_1\cdots x_n)^Q$
equals $P_Q(t)$. Take $Q=p^s$ and apply Theorem \ref{excellent-example2}. Expand $1/f$ and
$1/f^\sigma$ in \eqref{congruence-mod-fil1} modulo $p^2\fil_2$ and consider the coefficient of
$(x_1\cdots x_n)^{p^s}$ on both sides. Our proposition follows immediately. 
\end{proof}

\section{The excellent Frobenius lift}
In 'p-Adic cycles', \cite{dwork69} Dwork devotes the last sections 7 and 8
to the algebraic relations of degree $p+1$ between modular functions of
the form $h(\tau)$ and $h(p\tau)$. Dwork observed that the Frobenius lift
$h(\tau)\to h(p\tau)$ is a particular lift with very desirable properties.
Eventually, in the context the of the Legendre family of elliptic curves,
he calls them 'excellent lifts'. We have seen examples of such lifts in
Theorems \ref{theorem-example1} and \ref{excellent-example2}. 

In this section we shall
display explicit excellent lifts for hypercubic families in the cases $n=1,2,3,4$. 

Let us start with $n=1$.
Let $f=1-t(x+1/x)$. Explicit computation shows that 
\[
\theta((1-4t^2)\theta-4t^2)\frac{1}{f}=\theta_x^2\frac{1}{f}\in\fil_2.
\]
A basis of solutions of the equation $\theta((1-4t^2)\theta-4t^2)y=0$ is given
by
\[
\frac{1}{\sqrt{1-4t^2}},\quad \frac{1}{\sqrt{1-4t^2}}\times \log\left(\frac{2t}{1+\sqrt{1-4t^2}}\right).
\]
Straightforward computation shows that
\be{q-expansion-nis1}
t(q)=\frac{q}{1+q^2}=\frac{1}{q+1/q}.
\ee
Hence Theorem \ref{excellent-example2} implies
\[
\cartier\left(1-\frac{x+1/x}{q+1/q}\right)^{-1}\is
\frac{1+q^2}{1-q^2}\frac{1-q^{2p}}{1+q^{2p}}
\left(1-\frac{x+1/x}{q^p+1/q^p}\right)^{-1}\mod{\fil_2}.
\]
It turns out we have exact equality.
\begin{proposition}
For all primes $p$ we have
\[
\cartier\left(1-\frac{x+1/x}{q+1/q}\right)^{-1}=
\frac{1+q^2}{1-q^2}\frac{1-q^{2p}}{1+q^{2p}}
\left(1-\frac{x+1/x}{q^p+1/q^p}\right)^{-1}.
\]
\end{proposition}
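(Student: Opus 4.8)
The plan is to reduce the identity to an explicit computation with Laurent series in $x$ and to observe that the two sides literally have the same expansion. Recall from~\eqref{q-expansion-nis1} that here $t=t(q)=q/(1+q^2)=1/(q+1/q)$, and that the excellent lift is $q^\sigma=q^p$ (the vertex coefficient $\gamma$ of $g=x+1/x$ equals $1$), so that $f^\sigma=1-(x+1/x)/(q^p+1/q^p)$. By \cite[\S 2]{BeVl20II} the Cartier operator on $\hat\Omega_f$ is computed on the formal expansion of a rational function at $\v 0$ --- a Laurent series in $x$ whose coefficients are power series in $q$ --- by the rule $\sum_j a_j x^j\mapsto\sum_j a_{pj}x^j$. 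Hence it suffices to compute this expansion on both sides.

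The first step is to factor $f$. Since $q+1/q=1/t$, one has
\[
1-t(x+1/x)=t\bigl(q+1/q-x-1/x\bigr)=t\,(q-x)\Bigl(1-\tfrac1{qx}\Bigr),
\]
and expanding each factor $q$-adically, namely $\tfrac1{q-x}=-\sum_{k\ge0}q^kx^{-k-1}$ and $\bigl(1-\tfrac1{qx}\bigr)^{-1}=-\sum_{m\ge1}q^mx^m$, then collecting the coefficient of $x^j$ and using $1/t=(1+q^2)/q$ gives
\[
\frac1f=\frac{1+q^2}{1-q^2}\sum_{j\in\Z}q^{|j|}x^j .
\]
(Equivalently one may expand $\sum_{k\ge0}t^k(x+1/x)^k$ directly, or use partial fractions in $x$; in every case the coefficient of $x^{\pm m}$ in $1/f$ comes out to be $\tfrac{1+q^2}{1-q^2}q^{m}$.) Here $1/(1-q^2)$ is a unit in $\Z_p\lb q\rb$, so the right-hand side is a bona fide element of the completed module.

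Applying $\cartier$ now keeps the indices divisible by $p$ and re-indexes, yielding $\cartier(1/f)=\tfrac{1+q^2}{1-q^2}\sum_{j\in\Z}q^{p|j|}x^j$. For the right-hand side of the Proposition, the same formula for $1/f$ with $q$ replaced by $q^p$ gives $\bigl(1-(x+1/x)/(q^p+1/q^p)\bigr)^{-1}=\tfrac{1+q^{2p}}{1-q^{2p}}\sum_{j\in\Z}q^{p|j|}x^j$, and multiplying by the prefactor $\tfrac{1+q^2}{1-q^2}\cdot\tfrac{1-q^{2p}}{1+q^{2p}}$ produces exactly $\tfrac{1+q^2}{1-q^2}\sum_{j\in\Z}q^{p|j|}x^j$. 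The two expansions coincide, which proves the identity; note that $\fil_2$ plays no role since we obtain an exact equality rather than a congruence, and in fact the argument works verbatim for any positive integer $p$.

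There is essentially no obstacle here: the only points that require care are (i) that the Cartier operator on these rational functions really is the coefficient-extraction operator on the expansion at $\v 0$ --- which is the content of \cite[\S 2]{BeVl20II} and is precisely what licenses checking the identity expansion-wise --- and (ii) keeping track of which root of $x^2-x/t+1$ is ``small'' in the $q$-adic topology (it is $x=q$), so that the geometric expansions above converge; once these are pinned down the computation is routine. As an alternative one may avoid series altogether and simply clear denominators: multiply the claimed identity by $f^\sigma=1-(x+1/x)/(q^p+1/q^p)$ and verify the resulting identity of rational functions, but the expansion argument is the most transparent.
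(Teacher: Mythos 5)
Your computation is correct and is essentially the paper's own proof: the paper packages the same expansion as the partial-fraction identity $\frac1f=\frac{1+q^2}{1-q^2}\bigl(\frac{1}{1-qx}-\frac{1}{1-q^{-1}x}\bigr)$ and then applies $\cartier\bigl(1/(1-Ax)\bigr)=1/(1-A^px)$, which is exactly your coefficient-extraction rule applied to the expansion $\frac{1+q^2}{1-q^2}\sum_{j\in\Z}q^{|j|}x^j$, just written more compactly. No gaps.
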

\begin{proof}
One easily derives that
\[
\left(1-\frac{1}{q+1/q}(x+1/x)\right)^{-1}=\frac{1+q^2}{1-q^2}
\left(\frac{1}{1-qx}-\frac{1}{1-q^{-1}x}\right).
\]
Since $\cartier(1/(1-Ax))=1/(1-A^px)$ for any $A$ we get as Cartier image
\[
\frac{1+q^2}{1-q^2}\left(\frac{1}{1-q^px}-\frac{1}{1-q^{-p}x}\right)
\]
and our result follows. 
\end{proof}

Let us turn to the case $n=2$.
The functions $F(t),F(t)\log t+G(t)$ satisfy the Gaussian 
hypergeometric equation for ${}_2F_1(1/2,1/2,1|z)$, but with $z$ replaced by $t^2$.
The map
$t\mapsto \tau:=\frac{F(t)\log t+G(t)}{F(t)}$ is called the Schwarz map. 
Classical Schwarz theory of hypergeometric functions tells us that the inverse of
the Schwarz map is related to a modular function of level $2$.
More concretely, in the first case we get
\[
t(q)=q-4q^3+14q^5-40q^7+101q^8-\cdots=q\prod_{n\ge1}\frac{(1+q^{4n})^8}{(1+q^{2n})^4},
\]
Observe that $t(q)^2=\lambda(q^2)/16$, where $\lambda$ is the classical Klein modular 
$\lambda$-function. We know that $\lambda(e^{\pi i\tau})$ is a Hauptmodul for the modular
congruence subgroup $\Gamma(2)$. For $t$ itself this means that $t(e^{\pi i\tau/2})$ is a modular
function with respect to $\Gamma(4)$. 
By a classical identity of Felix Klein we have
\[
F(t(q))={}_2F_1(1/2,1/2,1|\lambda(q^2))=\left(\sum_{k\in\Z}q^{2k^2}\right)^2,
\]
which is a weight 1 modular form with respect to $\Gamma(2)$ when we take $q=e^{\pi i\tau/2}$.

For any odd prime $p$ the function $t(q^p)$ with $q=e^{\pi i\tau/2}$ is modular with respect
to the group $\Gamma(4)\cap\Gamma_0(p)$, which has index $p+1$ in $\Gamma(4)$. 
Therefore $t^\sigma=t(q^p)$ is an algebraic function of $t(q)$ of degree $p+1$. 
The polynomial $\Phi_p(X,Y)\in\Z[X,Y]$ such that $\Phi_p(t^\sigma,t)=0$ is called the modular polynomial.
Explicit computation yields for example
\[
\Phi_3(X,Y)=X^4 - X Y + 12 X^3 Y + 6 X^2 Y^2 + 12 X Y^3 - 256 X^3 Y^3 + Y^4
\]
and
\begin{eqnarray*}
\Phi_5(X,Y)&=&X^6 - X Y + 20 X^3 Y - 70 X^5 Y - 40 X^2 Y^2 + 655 X^4 Y^2\\
&& +  20 X Y^3 - 660 X^3 Y^3 + 5120 X^5 Y^3 + 655 X^2 Y^4 -  10240 X^4 Y^4\\
&& - 70 X Y^5 + 5120 X^3 Y^5 - 65536 X^5 Y^5 + Y^6.
\end{eqnarray*}

A straightforward calculation shows that the family of elliptice curves $1-t(x+1/x)(y+1/y)=0$ is
isomorphic over $\Q(i,t)$ to the Legendre family $y^2=x(x-1)(x-16t^2)$. 
In \cite[\S7]{dwork69} it is shown that in this case $(t^\sigma)^2$ is contained in the 
$p$-adic completion of the ring $\Z_p[t,1/hw^{(1)}]$ (so without $hw^{(2)}\is(1-16t^2)^{p-1}\mod{p}$). This
is referred to as 'Deligne's theorem' by Dwork. 

Now we turn to $n=3$. Then $F(t)={}_3F_2(1/2,1/2,1/2;1,1|64t^2)$. By Clausen's formula, we know that
\[
{}_3F_2(1/2,1/2,1/2;1,1|z)={}_2F_1(1/4,1/4;1|z)^2={}_2F_1(1/2,1/2;1|(1-\sqrt{1-z})/2)^2.
\]
Hence there should again be a relation with modular functions through the
properties of Gauss's hypergeometric functions. 
It turns out that
\[
t(q)=q-12q^3+78q^5-376q^7+\cdots=\prod_{n\ge1}\frac{(1+q^{4n})^{12}}
{(1+q^{2n})^{12}}
\]
and
\[
F(t(q))=\left(\sum_{k\in\Z}q^{2k^2}\right)^4.
\]
The latter is clearly a modular form of weight 2.

In cases when $n\ge4$ we do not expect any modular behaviour.

One of the interesting aspects of Theorem \ref{excellent-is-analytic} is that, given
$t_0\in\Z_p$ such that $hw^{(1)}(t_0),hw^{(2)}(t_0)\in\Z_p^\times$, we 
can compute $t^\sigma$ at the point $t_0$ by $p$-adic approximation. Let us call
this value $t^\sigma(t_0)$. This gives us the possibility to specialize Theorem
\ref{excellent-example2} to values $t_0\in\Z_p$ (or in a finite extension).
Although the possibility of such a specialization is present, it is usually not obvious
how to compute $t^\sigma(t_0)$ explicitly. Except in a few special cases which we
briefly discuss below. In that discussion we shall be interested in $t_0$ such that
$t^\sigma(t_0)=t_0$. In such a case Theorem \ref{excellent-example2} specializes to
\begin{corollary}
Let notations be as in Theorem \ref{excellent-example2} and suppose that $t^\sigma(t_0)=t_0$.
Suppose also that $hw^{(1)}(t_0),hw^{(2)}(t_0)\in\Z_p^\times$. 
Let $f_0(\v x)=1-t_0g(\v x)$. Then
\[
\cartier\left(\frac{1}{f_0}\right)\is u_0\frac{1}{f_0}\mod{p^2\fil_2},
\]
where $u_0$ is $F(t)/F(t^\sigma)$ evaluated at $t=t_0$. This is the unit root of
the $\zeta$-function of the curve $f_0(\v x)=0$. 
\end{corollary}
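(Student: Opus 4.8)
The plan is to obtain the asserted congruence by specialising the identity of Theorem~\ref{excellent-example2} at $t=t_0$. First I would build the specialisation map. Since $hw^{(1)}(t_0),hw^{(2)}(t_0)\in\Z_p^\times$, the elements $1/hw^{(1)}(t),1/hw^{(2)}(t)$ may be evaluated at $t_0$, so $t\mapsto t_0$ extends to a continuous $\Z_p$-algebra homomorphism $\mathrm{ev}_{t_0}\colon R\to\Z_p$, where $R$ is the $p$-adic completion of $\Z_p[t,1/hw^{(1)}(t),1/hw^{(2)}(t)]$ as in Theorem~\ref{excellent-is-analytic}. The key point is that $\mathrm{ev}_{t_0}$ intertwines the excellent Frobenius lift $\sigma$ on $R$ with the identity Frobenius lift on $\Z_p$: the two continuous ring maps $\mathrm{ev}_{t_0}\circ\sigma$ and $\mathrm{ev}_{t_0}$ agree on $t$ by the hypothesis $t^\sigma(t_0)=t_0$, hence on the dense subring $\Z_p[t,1/hw^{(1)},1/hw^{(2)}]$, hence everywhere. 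In particular both $f=1-tg$ and $f^\sigma=1-t^\sigma g$ specialise to $f_0=1-t_0g$, and the first and second Hasse--Witt conditions pass to the fibre since $hw^{(1)}(t_0),hw^{(2)}(t_0)\in\Z_p^\times$, so $\hat\Omega_{f_0}(\Delta^\circ)$ over $\Z_p$ (with identity Frobenius) is itself a Dwork crystal to which Corollary~\ref{main-theorem-alt} applies.

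Next I would promote $\mathrm{ev}_{t_0}$ to the crystals. Applying it coefficientwise to numerators gives specialisation maps $\hat\Omega_{L,f}(\Delta^\circ)\to\hat\Omega_{f_0}(\Delta^\circ)$ and $\hat\Omega_{L^\sigma,f^\sigma}(\Delta^\circ)\to\hat\Omega_{f_0}(\Delta^\circ)$ (the $\sigma$-twisted crystal landing in the same target because $f^\sigma\mapsto f_0$). These maps commute with $\cartier$, because on formal Laurent expansions $\cartier$ acts by $\sum a_{\v u}\v x^{\v u}\mapsto\sum a_{p\v u}\v x^{\v u}$, which visibly commutes with evaluation of coefficients; under them $1/f\mapsto1/f_0$ and $1/f^\sigma\mapsto1/f_0$. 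Moreover $\fil_2\hat\Omega_{f^\sigma}$ maps into $\fil_2\hat\Omega_{f_0}$: by Lemma~\ref{katzlemma} and Corollary~\ref{formal-2-fil-k} membership in $\fil_2$ is detected by the divisibility $a_{\v u}\in g.c.d.(u_1,\dots,u_n)^2\cdot(\text{base ring})$ of the formal expansion coefficients, which is preserved by $\mathrm{ev}_{t_0}$. (Here one expands at a vertex of $\Delta$ at which $f_0$ has a unit coefficient, or argues $t$-adically as in Section~\ref{higher-derivatives}; this is the one spot asking for a little care about the choice of expansion point.)

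With this in hand, apply $\mathrm{ev}_{t_0}$ to the congruence
\[
\cartier\left(\frac1f\right)\is\frac{F(t)}{F(t^\sigma)}\,\frac1{f^\sigma}\mod{p^2\fil_2^\sigma}
\]
of Theorem~\ref{excellent-example2}. The left side maps to $\cartier(1/f_0)$, the main term to $u_0/f_0$ with $u_0:=\mathrm{ev}_{t_0}\bigl(F(t)/F(t^\sigma)\bigr)$, and the error term, which lies in $p^2\fil_2\hat\Omega_{f^\sigma}$, maps into $p^2\fil_2\hat\Omega_{f_0}$ by the previous paragraph; hence $\cartier(1/f_0)\is u_0\frac1{f_0}\mod{p^2\fil_2}$. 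Here $u_0\in\Z_p^\times$ because $\lambda_0:=F(t)/F(t^\sigma)$ is a unit of $R$: for the excellent lift $\lambda_1=0$, so $\det\Lambda=\lambda_0\mu_1$, and since $\mu_1\in pR$ while $\det\Lambda/p\is hw^{(2)}\mod p$ is a unit of $R$, we get $\lambda_0\in R^\times$. Finally, the identification of $u_0$ with the unit root of the zeta function of $f_0(\v x)=0$ is the classical comparison between $\cartier$ on the unit-root quotient $\hat\Omega_{f_0}(\Delta^\circ)/\fil_1\cong\Z_p$ and the Frobenius on the $p$-adic cohomology of the fibre, as in Dwork~\cite{dwork69}; since $\sigma$ specialises to the identity of $\Z_p$, iterating the displayed congruence mod $\fil_1$ gives $\cartier^s(1/f_0)\is u_0^{\,s}/f_0\mod{\fil_1}$, consistent with $u_0^{\,s}$ being the unit root for the $p^s$-power Frobenius.

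The step I expect to be the main obstacle is the second one: making the specialisation genuinely functorial on the completed modules $\hat\Omega$, and in particular on $\fil_2$ --- i.e.\ choosing an expansion vertex so that all coefficients lie in a ring to which $\mathrm{ev}_{t_0}$ applies and to which the divisibility criterion for $\fil_2$ is available, and verifying that $\cartier$ and the formal expansion are compatible with this specialisation. Once that bookkeeping is done the corollary is a formal consequence of Theorem~\ref{excellent-example2}, with the unit-root statement supplied by the cited classical result.
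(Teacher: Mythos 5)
Your proposal is correct and follows the paper's own (implicit) route: the paper offers no argument beyond saying that Theorem \ref{excellent-example2} specializes at $t_0$, and your construction of the evaluation map $\mathrm{ev}_{t_0}\colon R\to\Z_p$ intertwining the excellent lift with the identity Frobenius on $\Z_p$ is exactly that specialization made explicit. For the one step you flag, note that preservation of $\fil_2$ is obtained most cleanly from Definition \ref{definition-fil} together with the algebraic formula \eqref{cartier-action}, which shows that $\cartier$ commutes with coefficient-wise evaluation without invoking formal expansions, thereby avoiding the expansion-vertex issue entirely.
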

Here are a few instances of explicit solutions of $t^\sigma(t_0)=t_0$ in the case of
hypercubic families. 

When $n=1$ it follows from \eqref{q-expansion-nis1} that $t_0=1/2$ implies $q=1$ and hence $t^\sigma(1/2)=1/2$.
The choice $t_0=1$ implies that $q=e^{\pm2\pi i/3}$. Hence $q^p=e^{\pm2\pi i/3}$, which implies $t^\sigma(1)=1$.
Similarly we obtain $t^\sigma(-1)=-1$.

When $n=2$ we have seen that $t(q)$ is a modular function. We also have the modular relation
$\Phi_p(t(q^p),t(q))=0$. So in particular $\Phi_p(t^\sigma(t_0),t_0)=0$, an algebraic equation
for $t^\sigma(t_0)$.

From classical computation it follows that the value $t_0=1/4$ corresponds to the cusp $q=1$.
Hence $t^\sigma(1/4)=1/4$. Non-cuspidal values $t_0$ of $t(q)$ such that $t^\sigma(t_0)=t_0$
satisfy $\Phi_p(t_0,t_0)=0$. They correspond to 
elliptic curves $f_0(\v x)=0$ which have complex multiplication. 
A particular CM-value is $t_0=i/4$, which is attained by $t(\tau)$ at
$\tau=(1+i)/2$. The curve $1-\frac{i}{4}(x+1/x)(y+1/y)=0$ is an elliptic
curve which is isomorphic over $\Q(i)$ to $Y^2=X^3-X$, a CM-curve with 
endomorphism ring $\Z[i]$. Note that
\[
F_p(i/4)=\sum_{k=0}^{(p-1)/2}\binom{-1/2}{k}^2(-1)^k
\]
is the prototype of the Hasse-Witt invariant which was already studied by Igusa.
Since $Y^2=X^3-X$ has ordinary reduction at $p$ if $p\is1\mod{4}$, we find that 
$F_p(i/4)\in\Z_p^\times$ when $p\is1\mod{4}$. 

Finally, when $n\ge3$ in the hypercubic case, we conjecture that $t^\sigma(1/2^n)=1/2^n$.

\section{Appendix}
In this section we display two linearly independent 
solutions of the differential equation 
\be{second-order-de-displayed}
\theta^2y-B(t)\theta y-A(t)y=0,
\ee 
which was introduced in Proposition~\ref{second-order-de}.
This differential equation describes the connection on the quotient of
the completely symmetric Calabi-Yau crystal $CY(g)$ modulo $\fil_2$.
We have $f(\v x)=1-t g(\v x)$ and the coefficients $A,B \in \Z_p\lb t \rb$
are determined by the condition that $\theta^2(1/f) - B \theta(1/f) - A/f \in \fil_2$.
The result below shows in particular that they do not depend on $p$. 

\begin{proposition}\label{second-order-solutions}
Let assumptions and notations be as in Theorem \ref{excellent-example2}. Denote the vertices of 
$\Delta$ by $\v v_1,\v v_2,\ldots,\v v_N$ and write 
$g(\v x) = \alpha + \gamma \sum_{i=1}^N \v x^\v v_i$ with $\alpha, \gamma \in \Z$.
Let $\Gamma$ be the lattice of vectors 
$\bm\ell=(\ell_1,\ell_2,\ldots,\ell_N)\in\Z^N$ such that $\ell_1\v v_1+\cdots+\ell_N\v v_N=\v0$.
We denote $|\bm\ell|=\sum_{i=1}^N\ell_i$.
Consider the power series in $t$,
\[
F(t)=\sum_{m=0}^\infty\sum_{\bm\ell\in\Gamma\atop\ell_1,\ldots,\ell_N\ge0}
t^{m +|\bm\ell|}\alpha^m \gamma^{|\bm \ell|}\frac{(m+|\bm\ell|)!}{m!(\ell_1)!\cdots(\ell_N)!}
\]
and 
\[\bal
G(t)= &\sum_{m=0}^\infty\sum_{\bm\ell\in\Gamma\atop\ell_1,\ldots,\ell_N\ge0}
t^{m +|\bm\ell|}\alpha^m \gamma^{|\bm \ell|}
\left(\sum_{r=\ell_1+1}^{m+|\bm\ell|}\frac{1}{r}\right)
\frac{(m+|\bm\ell|)!}{m!(\ell_1)!\cdots(\ell_N)!} \\
&+\sum_{m=0}^\infty\sum_{\bm\ell\in\Gamma\atop\ell_1<0,\ell_2,\ldots,\ell_N\ge0}
(-1)^{l_1+1}t^{m +|\bm\ell|}\alpha^m \gamma^{|\bm \ell|}
\frac{(m+|\bm\ell|)!(-1-l_1)!}{m!(\ell_2)!\cdots(\ell_N)!}.
\eal\]
Then $F(t)$ and $F(t)\log t+G(t)$ are linearly independent solutions of equation \eqref{second-order-de-displayed}.
\end{proposition}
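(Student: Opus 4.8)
The plan is to realize the two series of the statement as data attached to the formal expansion of $1/f$ at the interior point $\v 0$: for a fixed vertex $\v v_1$ of $\Delta$, I would look at the coefficient of $\v x^{Q\v v_1}$ in that expansion, viewed as a function of the parameter $Q$, and at its $Q$-derivative at $Q=0$. Expanding $\frac1f=\sum_{k\ge0}t^kg(\v x)^k=\sum_{k\ge0}t^k\bigl(\alpha+\gamma\sum_i\v x^{\v v_i}\bigr)^k$ by the multinomial theorem and collecting the coefficient of $\v x^{Q\v v_1}$ for a nonnegative integer $Q$ gives $t^Q\gamma^Q h(Q,t)$, where
\[
h(Q,t)=\sum_{m\ge0}\ \sum_{\substack{\bm\ell\in\Gamma\\ \ell_2,\ldots,\ell_N\ge0}} t^{m+|\bm\ell|}\,\alpha^m\gamma^{|\bm\ell|}\,\frac{1}{m!\,\ell_2!\cdots\ell_N!}\prod_{r=\ell_1+1}^{m+|\bm\ell|}(r+Q),
\]
the product being the continuation of $(m+|\bm\ell|+Q)!/(\ell_1+Q)!$ to real $Q$ (for integer $Q\ge0$ the terms with $\ell_1<-Q$ drop out, so this really is the $\v x^{Q\v v_1}$-coefficient). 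Because a vertex $\v v_1$ of $\Delta$ is never a convex combination of $\v v_2,\dots,\v v_N$, the set $\{\bm\ell\in\Gamma:\ell_2,\dots,\ell_N\ge0,\ |\bm\ell|=c\}$ is finite for each $c$, so each $t^n$-coefficient $h_n(Q)$ is an honest polynomial in $Q$; the same extremality argument shows $|\bm\ell|\ge1$ whenever $\ell_1<0$. Evaluating at $Q=0$ kills every term with $\ell_1<0$ (the product picks up the factor $r=0$) and leaves exactly the series $F(t)$; differentiating in $Q$ at $Q=0$, using $\tfrac{d}{dz}\Gamma(z)^{-1}\big|_{z=-k}=(-1)^kk!$ to handle the terms with $\ell_1<0$, I would match the result with $F(t)\log t+(\log\gamma)F(t)+G(t)$, with $G(t)$ precisely the series in the statement — the $\ell_1<0$ terms supplying the second sum of $G$ together with its factor $(-1)^{\ell_1+1}(-1-\ell_1)!$.

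Next I would transfer the differential equation. Write $L=\theta^2-B(t)\theta-A(t)$, so that $L(1/f)\in\fil_2\,CY(g)$ by Proposition~\ref{second-order-de}. Taking the coefficient of $\v x^{Q\v v_1}$ commutes with $\theta$ and with multiplication by $A,B$, so $L$ acts on these coefficients by the same formula; and by the coefficient characterization of $\fil_2$ (the $\v x^\v u$-coefficient of a member of $\fil_2$ lies in $\gcd(u_1,\dots,u_n)^2\Z_p\lb t\rb$) the coefficient of $\v x^{p^s\v v_1}$ of $L(1/f)$ lies in $p^{2s}\Z_p\lb t\rb$. Using $\theta(t^Q\phi)=t^Q(\theta+Q)\phi$ one gets $L\bigl(t^Q\gamma^Q h(Q,\cdot)\bigr)=t^Q\Phi(Q,\cdot)$ with $\Phi(Q,t):=\gamma^Q\bigl[(\theta+Q)^2-B(\theta+Q)-A\bigr]h(Q,t)=\sum_n\Phi_n(Q)t^n$. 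Each $\Phi_n(Q)=\gamma^{Q}\widetilde\Phi_n(Q)$ with $\widetilde\Phi_n\in\Q[Q]$ a polynomial, and since $\gamma\in\Z_p^\times$ the evaluation $\Phi_n(p^s)\in p^{2s}\Z_p$ forces $\widetilde\Phi_n(p^s)\in p^{2s}\Z_p$ for all $s\ge1$.

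Now a polynomial $P\in\Q_p[Q]$ with $P(p^s)\in p^{2s}\Z_p$ for all $s$ is divisible by $Q^2$: since $p^s\to0$ in $\Q_p$ one has $P(0)=\lim_sP(p^s)=0$ because $|P(p^s)|_p\le p^{-2s}$, and applying this to $P/Q$ gives $P'(0)=0$ as well. Hence $\widetilde\Phi_n$, and therefore $\Phi_n$, vanishes to order $2$ at $Q=0$: $\Phi_n(0)=\Phi_n'(0)=0$ for every $n$. From $\Phi_n(0)=0$ we get $L(F)=L\bigl(h(0,\cdot)\bigr)=\Phi(0,\cdot)=0$, so $F$ solves \eqref{second-order-de-displayed}. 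Differentiating $L\bigl(t^Q\gamma^Q h(Q,\cdot)\bigr)=t^Q\Phi(Q,\cdot)$ in $Q$ at $Q=0$: the left side is $L$ applied to $\partial_Q\bigl(t^Q\gamma^Q h(Q,\cdot)\bigr)\big|_{Q=0}=(\log t+\log\gamma)F(t)+G(t)$ (as $L$ does not involve $Q$), the right side is $\log t\cdot\Phi(0,t)+\partial_Q\Phi(0,t)=\sum_n\Phi_n'(0)t^n=0$; together with $L(F)=0$ this yields $L(F\log t+G)=0$. Finally $F$ and $F\log t+G$ are linearly independent, since $F(0)=1$ while $G(0)=0$ (using $|\bm\ell|\ge1$ for $\ell_1<0$), so $F\log t+G$ is not a scalar multiple of the power series $F$; equivalently the Wronskian $W=F^2+F\,\theta G-(\theta F)\,G$ has $W(0)=1\neq0$.

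The main obstacle I anticipate is entirely in the first paragraph and is computational: carrying out the multinomial expansion and, above all, checking that the $Q$-derivative at $0$ of the continued coefficient reproduces $G(t)$ on the nose, in particular that the $\ell_1<0$ contributions — which enter through the derivative of $1/\Gamma$ at non-positive integers — reassemble exactly into the second sum defining $G$ with the correct sign and factorial. The differential-equation transfer and the polynomial-divisibility argument are short and formal once Proposition~\ref{second-order-de} and the $\fil_2$-coefficient description are in hand.
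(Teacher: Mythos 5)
Your proposal is correct, and its skeleton coincides with the paper's: expand $1/f$ at $\v 0$, view the coefficient of $\v x^{Q\v v_1}$ as $t^Q\gamma^Q h(Q,t)$ with $t$-coefficients polynomial in $Q$ (the finiteness and $|\bm\ell|\ge1$ claims are exactly the paper's Lemma~\ref{bounded-ell1}, proved by the same vertex-extremality argument), identify $F=h(0,\cdot)$ and $G=\partial_Q h(0,\cdot)$ including the $\ell_1<0$ contributions, and feed the relation $\theta^2(1/f)-B\theta(1/f)-A/f\in\fil_2$ through the coefficient of $\v x^{p^s\v v_1}$ to get mod-$p^{2s}$ information. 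Where you genuinely diverge is the step that turns these congruences into the differential equation for the logarithmic solution: the paper truncates $b(Q,t)$ at $t^N$, tracks the $p$-adic valuation $\rho$ of the common denominator $D_N$ of the coefficients, Taylor-expands $b^{(N)}(p^s,t)$ around $Q=0$ modulo $p^{2s-\rho}$, and then takes the double limit $s\to\infty$, $N\to\infty$; you instead observe that each $t$-coefficient $\widetilde\Phi_n(Q)$ of $[(\theta+Q)^2-B(\theta+Q)-A]\,h(Q,t)$ is a single polynomial in $\Q_p[Q]$ whose values at $Q=p^s$ lie in $p^{2s}\Z_p$, hence is divisible by $Q^2$, and then differentiate formally in $Q$. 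That polynomial-divisibility argument is sound and buys a shorter proof with no denominator bookkeeping and no limiting process, while yielding exactly the paper's identity $(2\theta-B)F+[\theta^2-B\theta-A]\,\partial_Qh(0,\cdot)=0$. Two small polish points: it is cleaner to divide out the unit $\gamma^{p^s}$ (i.e.\ work with $t^Qh(Q,t)$, as the paper does) before differentiating, so that no interpretation of $\log\gamma$ is needed — in your version the $(\log\gamma)F$ term is harmless only because $L(F)=0$ and $\widetilde\Phi_n(0)=0$, which you do have; and the final identification $\partial_Qh(0,\cdot)=G$, which you rightly flag as the main computation, is exactly the step the paper also defers as ``a straightforward computation,'' and your sketch of the $\ell_1<0$ terms producing $(-1)^{\ell_1+1}(-1-\ell_1)!\,(m+|\bm\ell|)!$ is the correct way to carry it out.
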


We remark that the second term in $G(t)$, with $\ell_1<0$, does not occur
in the simplicial and octahedral examples, but it does occur in the hypercubic case when $n\ge3$.
We start with a lemma which shows that $F$ and $G$ are indeed power series and we have
$F(0)=1$ and $G(0)=0$. 

\begin{lemma}\label{bounded-ell1}
There exists a positive real number $\lambda$
such that $\lambda(\ell_1+\cdots+\ell_N)\ge \ell_2+\cdots+\ell_N$ for all $\bm\ell\in\Gamma$ with
$\ell_2,\ldots,\ell_N\ge0$. 
In particular, for such $\bm\ell \in \Gamma$ we have $|\bm \ell|>0$ if $\bm \ell \ne \v 0$.
\end{lemma}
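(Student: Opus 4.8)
The plan is to prove the two assertions in the opposite order to the one in which they are stated: the ``in particular'' clause is really the heart of the matter, and the linear inequality then follows from it by elementary polyhedral geometry.

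First I would show that $|\bm\ell|>0$ for every $\bm\ell\in\Gamma\setminus\{\v0\}$ with $\ell_2,\ldots,\ell_N\ge0$. Suppose not, so that $|\bm\ell|\le0$ while $\bm\ell\ne\v0$, and put $s=\ell_2+\cdots+\ell_N\ge0$, whence $\ell_1=|\bm\ell|-s\le-s\le0$. If $s=0$ then $\ell_i=0$ for all $i\ge2$ and the defining relation $\sum_i\ell_i\v v_i=\v0$ becomes $\ell_1\v v_1=\v0$; since a vertex of $\Delta$ is nonzero this forces $\ell_1=0$, contradicting $\bm\ell\ne\v0$. Hence $s>0$ and $\ell_1<0$, and dividing the relation by $-\ell_1>0$ gives
\[
\v v_1\=\sum_{i=2}^N\frac{\ell_i}{-\ell_1}\,\v v_i ,
\]
a combination with nonnegative coefficients summing to $s/(-\ell_1)\le1$ (because $-\ell_1\ge s$). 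Adding the term $\bigl(1-s/(-\ell_1)\bigr)\v0$ then exhibits $\v v_1$ as a convex combination of the points $\v0,\v v_2,\ldots,\v v_N$, all of which lie in $\Delta$ and are distinct from $\v v_1$. This contradicts the fact that $\v v_1$ is an extreme point of $\Delta$, and proves the ``in particular'' statement.

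Next I would deduce the uniform bound. The set $C=\{\bm\ell\in\Gamma\otimes\R\,:\,\ell_i\ge0\text{ for }i=2,\ldots,N\}$ is a rational polyhedral cone, and it is pointed: if $\bm\ell,-\bm\ell\in C$ then $\ell_i=0$ for $i\ge2$ and $\ell_1\v v_1=\v0$, so $\bm\ell=\v0$. A pointed rational polyhedral cone is generated by finitely many primitive lattice vectors $\v w_1,\ldots,\v w_k\in\Gamma$; each $\v w_j$ is a nonzero element of $\Gamma$ with $w_{j,2},\ldots,w_{j,N}\ge0$, so Step~1 gives $|\v w_j|>0$. Setting
\[
\lambda\=\max\Bigl(1,\ \max_{1\le j\le k}\frac{w_{j,2}+\cdots+w_{j,N}}{|\v w_j|}\Bigr)\ >\ 0 ,
\]
any $\bm\ell\in\Gamma$ with $\ell_2,\ldots,\ell_N\ge0$ lies in $C$, hence $\bm\ell=\sum_jc_j\v w_j$ with $c_j\ge0$, and therefore
\[
\ell_2+\cdots+\ell_N\=\sum_jc_j\,(w_{j,2}+\cdots+w_{j,N})\ \le\ \lambda\sum_jc_j\,|\v w_j|\=\lambda\,|\bm\ell| ,
\]
which is the inequality claimed (the case $\bm\ell=\v0$ being trivial).

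I expect the only genuine difficulty to be Step~1, specifically the case $\ell_1<0$: the reason the desired strict positivity cannot fail there is exactly that $\v v_1$ is a vertex, hence an extreme point, of $\Delta$, so it cannot be written as a convex combination of $\v0$ and the remaining vertices. (One could instead argue via a supporting hyperplane of $\Delta$ through $\v v_1$, or via the convexity and positive homogeneity of the degree function $\deg$, but the extreme-point formulation above seems most economical.) Once positivity is in hand, Step~2 is routine, and reflexivity of $\Delta$ enters only through the elementary facts that the $\v v_i$ are vertices and that $\v0\in\Delta$.
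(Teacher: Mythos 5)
Your proof is correct, but it reaches the uniform constant by a different route than the paper. The paper argues directly: for $\ell_1\ge 0$ the inequality is trivial with $\lambda=1$, and for $\ell_1<0$ it rescales the relation so that $\frac{|\ell_1|}{\ell_2+\cdots+\ell_N}\v v_1$ lies in the convex hull $H$ of $\v v_2,\ldots,\v v_N$, then uses the single scalar $\mu=\max\{c:\ c\v v_1\in H\}$, which is $<1$ because the vertex $\v v_1$ does not lie in $H$, to obtain the explicit constant $\lambda=1/(1-\mu)$; the positivity of $|\bm\ell|$ is then a formal consequence of the inequality. You reverse the order: you first prove the qualitative statement $|\bm\ell|>0$ from extremality of $\v v_1$, padding the convex combination with $\v 0\in\Delta$ (this is where reflexivity enters for you, whereas the paper's argument never needs $\v 0$), and you then get the uniform $\lambda$ from Minkowski--Weyl/Gordan: the cone $\{\bm\ell\in\Gamma\otimes\R:\ \ell_2,\ldots,\ell_N\ge 0\}$ is rational polyhedral, hence generated by finitely many lattice vectors of $\Gamma$, and the linear inequality need only be checked on those generators. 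Both arguments rest on the same geometric fact -- a vertex of $\Delta$ is not a convex combination of the other vertices (and points of $\Delta$) -- but the paper's scaling argument buys an explicit constant in two lines, while your version trades that for standard cone machinery, cleanly separating positivity from uniformity and sidestepping any discussion of the existence and size of $\mu$.
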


\begin{proof}
When $\ell_1\ge0$ we can take $\lambda=1$. Suppose $\ell_1<0$. Then $|\ell_1|\v v_1=\ell_2\v v_2+\cdots+\ell_N\v v_N$.
Divide on both sides by $\ell_2+\cdots+\ell_N$. Then we see that $|\ell_1|\v v_1/(\ell_2+\cdots+\ell_N)$ lies
in the convex hull $H$ of $\v v_2,\ldots,\v v_N$. Let $\mu$ be the largest real number such that $\mu\v v_1$
lies in $H$. Since $\v v_1$ is a vertex of $\Delta$, it does not lie in $H$, hence $\mu<1$. So we find
that $-\ell_1=|\ell_1|\le \mu(\ell_2+\cdots+\ell_N)$. Hence $\ell_1+\cdots+\ell_N\ge(1-\mu)(\ell_2+\cdots+\ell_N)$.
Our Lemma follows by taking $\lambda=1/(1-\mu)$. 
\end{proof}

\begin{proof}
We expand $1/f$ in a Laurent series at $\v 0$, namely
\[\bal
\frac{1}{f} &= \sum_{k=0}^\infty t^k g(\v x)^k\\
& =\sum_{m=0}^\infty\sum_{k_1,\ldots,k_N\ge0}t^{m +\sum_i k_i}\alpha^m \gamma^{\sum_i k_i}
\frac{(k_1+\cdots+k_N)!}{(k_1)!\cdots(k_N)!}\v x^{\sum_ik_i\v v_i}.
\eal\]
We determine the coefficient of $\v x^{Q\v v_1}$ for any integer $Q\ge0$.
The set of all $N$-tuples
$(k_1,\ldots,k_N)$ such that $Q\v v_1=\sum_{i=1}^N k_i\v v_i$ is given by 
$(Q+\ell_1,\ell_2,\ldots,\ell_N)$
with $(\ell_1,\ell_2,\ldots,\ell_N)\in\Gamma$.
Since $k_1,k_2,\ldots,k_N\ge0$ in the summation we see that $\ell_2,\ldots,\ell_N\ge0$
and $\ell_1 \ge -Q$. 
The desired coefficient equals $(t\gamma)^Qb(Q,t)$, where
\[
b(Q,t)=\sum_{m=0}^\infty\sum_{\bm\ell\in \Gamma\atop Q+\ell_1,\ell_2,\ldots,\ell_N\ge0}
t^{m +|\bm\ell|}\alpha^m \gamma^{|\bm \ell|}
\frac{(Q+m+|\bm\ell|)!}{m!(Q+\ell_1)!(\ell_2)!\cdots(\ell_N)!}.
\]
This expression can be rewritten as
\[\bal
b(Q,t)&=\sum_{m=0}^\infty\sum_{\bm\ell\in \Gamma\atop\ell_2,\ldots,\ell_N\ge0}
t^{m +|\bm\ell|}\alpha^m \gamma^{|\bm \ell|}\frac{(Q+m+|\bm\ell|)\cdots(Q+\ell_1+1)}
{m!(\ell_2)!\cdots(\ell_N)!}\\
&=\sum_{M=0}^{\infty} B_M(Q) t^M
\eal\]
with some coefficients $B_M(Q) \in \Q[Q]$. The fact that $B_M(Q)$ are polynomials
follows from~Lemma \ref{bounded-ell1} which shows that only finitely many terms
contribute to each coefficient at a given power of $t$. Note that 
$b(0,t)=F(t) \in \Z\lb t \rb$ is the constant term in the above expansion of $1/f$.

Take the coefficient of $\v x^{Q\v v_1}$ in the expansion of 
\be{connection-relation}
\theta^2(1/f) - B(t) \theta(1/f) - A(t)/f \in \fil_2.
\ee
When $Q=0$ this yields
$\theta^2F-B(t)\theta F-A(t)F=0$, so $F$ is indeed a solution
of~\eqref{second-order-de-displayed}. 
We assert that we get $F(t)\log t+G(t)$ by taking the partial derivative
of $t^Qb(Q,t)$ with respect to $Q$
and then setting $Q=0$. It will take a small detour to see that it satisfies our
second order equation.

Let $Q=p^s$. Taking the coefficient of $\v x^{p^s\v v_1}$ in~\eqref{connection-relation} 
yields
\[
\theta^2(t^{p^s}b(p^s,t))-B(t)\theta (t^{p^s}b(p^s,t))-A(t)t^{p^s}b(p^s,t)\is 0\mod{p^{2s}}.
\]
Hence
\be{congruence-de}
(\theta+p^s)^2b(p^s,t)-B(t)(\theta+p^s) b(p^s,t)-A(t)b(p^s,t)\is 0\mod{p^{2s}}.
\ee
Let $N$ be a positive integer and consider \eqref{congruence-de} modulo $t^{N+1}$.
Denote by $b^{(N)}(Q,t)=\sum_{M=0}^N b_M(Q)t^M$ the
truncation of the power series $b(Q,t)$ in $t$ at $t^N$. It is a polynomial in both $Q$ and $t$ 
with coefficients in $\Q$. Let $D_N$ be the common denominator of all 
coefficients of this polynomial.
We still have $b^{(N)}(0,t)\in\Z[t]$. Let $\rho=\ord_p(D_N)$ and suppose that
$s>\rho$. We deduce that 
\[
b^{(N)}(p^s,t)\is b^{(N)}(0,t)+\frac{\partial b^{(N)}}{\partial Q}(0,t)p^s\mod{p^{2s-\rho}}.
\]
Substitution of this expansion in \eqref{congruence-de} modulo $t^{N+1}$ yields
\begin{eqnarray*}
&&[(\theta+p^s)^2-B(t)(\theta+p^s)-A(t)]b^{(N)}(0,t)\\
&&+p^s[(\theta+p^s)^2-B(t)(\theta+p^s)-A(t)]\frac{\partial b^{(N)}}{\partial Q}(0,t)
\is0 \; \mod{p^{2s-\rho},t^{N+1}}.
\end{eqnarray*}
Modulo $p^{2s-\rho}$ the first line equals 
\[
[\theta^2-B(t)\theta-A(t)]b^{(N)}(0,t)+(2p^s\theta-p^sB(t))b^{(N)}(0,t)\mod{p^{2s-\rho}}
\]
and since $b^{(N)}(0,t)\is F(t)\mod{t^{N+1}}$ we are left with
$p^s(2\theta-B(t))F(t)\mod{p^{2s-\rho},t^{N+1}}$. 
Divide by $p^s$ to get
\[
(2\theta-B(t))F(t)+[\theta^2-B(t)\theta-A(t)]
\frac{\partial b^{(N)}}{\partial Q}(0,t)\is0\mod{p^{s-\rho},t^{N+1}}.
\]
Let $s\to\infty$ to see that the above equality holds true modulo $t^{N+1}$ for any integer $N$.
By letting $N\to\infty$ we find that
\[
(2\theta-B(t))F(t)+[\theta^2-B(t)\theta-A(t)]\frac{\partial b}{\partial Q}(0,t)=0.
\]
This can be rewritten as
\[
[\theta^2-B(t)\theta-A(t)]\left(F(t)\log t+\frac{\partial b}{\partial Q}(0,t)\right)=0.
\]
The proposition is proved once we have verified that $G(t)=\frac{\partial b}{\partial Q}(0,t)$.
This is a straightforward computation.  
\end{proof}

\end{document}